\documentclass[oneside]{amsart}

\usepackage{placeins}

\usepackage{hyphenat}
\usepackage{amssymb, amsmath, amsthm, amsfonts, mathtools}
\usepackage{mathabx}

\allowdisplaybreaks

\usepackage{hyperref}
\usepackage[nameinlink]{cleveref}

\usepackage{autonum} 

\usepackage{xcolor}

\usepackage{tensor}

\usepackage{wasysym}

\usepackage[normalem]{ulem}

\usepackage{pdfpages}

\usepackage{graphicx}

\usepackage{marginnote}

\usepackage{xfrac}
\usepackage{nicefrac}
\usepackage{faktor}

\usepackage{cancel}

\usepackage{slashed}

\usepackage{tikz}
\usetikzlibrary{matrix,calc,arrows.meta}
\usepackage{tikz-cd}
\usetikzlibrary{backgrounds}

\tikzcdset{
	boxedcd/.style={
		every matrix/.append style={
			draw=red,
			thick,
			fill=yellow!50!white,
			rounded corners,
			#1
		},
	},
}

\usepackage{quiver}

\usepackage{comment}

\usepackage{esvect}

\usepackage{braket}

\usepackage{enumitem}

\usepackage{nicematrix}

\usepackage{thmtools}

\usepackage{pdfpages}

\usepackage{leftindex}

\usepackage{upgreek}

\usepackage{ stmaryrd }

\theoremstyle{plain}
\newtheorem{theorem}{Theorem}[section]
\crefname{theorem}{Theorem}{Theorems}
\newtheorem{proposition}[theorem]{Proposition}
\crefname{proposition}{Proposition}{Propositions}
\newtheorem{lemma}[theorem]{Lemma}
\crefname{lemma}{Lemma}{Lemmas}
\newtheorem{corollary}[theorem]{Corollary}
\crefname{corollary}{Corollary}{Corollaries}
\theoremstyle{definition}
\newtheorem{definition}[theorem]{Definition}
\crefname{definition}{Definition}{Definitions}
\newtheorem{example}[theorem]{Example}
\crefname{example}{Example}{Examples}
\newtheorem{def-prop}[theorem]{Definition-Proposition}
\crefname{def-prop}{Definition-Proposition}{}
\newtheorem{def-lem}[theorem]{Definition-Lemma}
\crefname{def-lem}{Definition-Lemma}{}

\crefname{assumption}{Assumption}{Assumptions}
\theoremstyle{remark}
\newtheorem{remark}[theorem]{Remark}
\crefname{remark}{Remark}{Remarks}

\crefname{claim}{Claim}{Claims}

\crefname{notation}{Notation}{}
\newtheorem{question}[theorem]{Question}
\crefname{question}{Question}{Questions}

\crefname{todo}{To do}{To dos}

\crefname{problem}{Problem}{Problems}

\crefname{chapter}{Chapter}{Chapters}
\crefname{section}{Section}{Sections}
\crefname{subsection}{Subsection}{Subsections}
\crefname{figure}{Figure}{Figures}

\usepackage[figurename=Figure,tablename=Table]{caption}

\newcommand*\colvec[3][]{
	\begin{pmatrix}\ifx\relax#1\relax\else#1\\\fi#2\\#3\end{pmatrix}
}

\usepackage{mathbbol} 
\DeclareSymbolFontAlphabet{\mathbbm}{bbold}%
\DeclareSymbolFontAlphabet{\mathbb}{AMSb}%

\newcommand{\setfrac}[2]{#1/#2}

\def\black{\color{black}}

\def\B{\mathcal{B}}

\newcommand{\cl}[1]{\mathcal{#1}}
\newcommand{\bb}[1]{\mathbb{#1}}

\begin{document}

\title[Operator systems and positive extensions]
{Operator systems and positive extensions over discrete groups}


\author[E.T.A. Kakariadis]{Evgenios T.A. Kakariadis}
\address{Department of Mathematics\\ National and Kapodistrian University of Athens\\ Athens\\ 1578 84\\ Greece}
\email{evkakariadis@math.uoa.gr}

\author[M. Leimbach]{Malte Leimbach}
\address{Max Planck Institute for Mathematics, Vivatsgasse 7, 53111 Bonn, Germany.}
\email{leimbach@mpim-bonn.mpg.de}

\author[I.G. Todorov]{Ivan G. Todorov}
\address{School of Mathematical Sciences\\ University of Delaware\\ 501 Ewing Hall\\ Newark\\ DE 19716\\ USA} 
\email{todorov@udel.edu}

\author[W.D. van Suijlekom]{Walter D. van Suijlekom}
\address{Institute for Mathematics, Astrophysics and Particle Physics, Radboud	University Nijmegen, Heyendaalseweg 135, 65254 AJ Nijmegen, The Netherlands.}
\email{waltervs@math.ru.nl}

\date{March 31, 2026}
	
\black


\begin{abstract}
The extension problem asks whether positive semi-definite functions on a symmetric unital subset of a discrete group can be extended to positive semi-definite functions on the whole group.
It has been known at least since the work of Rudin in the 1960s that this is closely related to the problem of finding sums of squares factorisations of positive elements in the group C*-algebra.
We give an operator system perspective at these two problems explaining their equivalence:
the extension property is characterised by a certain quotient map on the Fourier--Stieltjes algebra, and the factorisation property by a certain complete order embedding into the group C*-algebra.
These properties are linked to the duality of the operator systems which have recently emerged from spectral and Fourier truncations in noncommutative geometry.
We exemplify how one can relate certain extension problems to operator system techniques such as nuclearity and the C*-envelope.
\end{abstract}

\maketitle

\addtocontents{toc}{\protect\setcounter{tocdepth}{1}}

\tableofcontents

\section{Introduction}

The study of positivity of functions via sums of squares of other functions has its roots in the work of Hilbert \cite{Hil88}, with a corresponding factorisation question formulated by him 
and becoming widely known as Hilbert's 17th problem
\cite{Hilbert-problems}. 
In the context of classical Fourier analysis, affirmative first results and counterexamples for positive semi-definite functions defined on subsets of abelian groups were obtained by Rudin \cite{Rud63}. 
The study of the sums-of-squares problem revealed deep and intricate connections with harmonic analysis, first in the case of abelian groups \cite{Gab98, BN00a, CP52, Rud63} and then in the non-abelian case; see e.g. \cite{BT07, BT-CMJ}.

Despite the negative general results, the failure of the characterisation of positivity as an expression of sums of squares led to fruitful semi-definite optimisation hierarchies. 
In the commutative case, these were first studied by Lasserre \cite{lasserre} and have had lasting impact in optimisation theory, while in the context of quantum information theory, sums-of-squares approximations for the problem of determining the quantum commuting value of a non-local game have been proposed with great success by Navascu\'es, Pironio and Ac\'in \cite{NPA}.
At the heart of these optimisation hierarchies lies a constrained resolution approach, where the test set (usually convex) for a given optimisation problem is represented as an increasing union (or the closure thereof) of subsets that lead to computationally amenable estimations of the cost function. 

A constrained resolution approach to noncommutative geometry has independently generated recent interest; 
here, the $\mathrm{C}^*$-algebras in the spectral triple formalism are replaced by operator systems, as a means to describe physical systems with incomplete information \cite{CvS21}.
Specifically, in the group setting, one considers the state space of spectral truncations, which leads to the problem of extending partially positive semi-definite functions to globally positive semi-definite ones. 
The goal of this paper is to present a unifying approach to the extension and factorisation problems in general (not necessarily abelian) discrete groups, using the formalism of operator systems, motivated by the main examples appearing in the context of noncommutative geometry \cite{CvS21}. 

To be more precise, in the latter context, operator systems arise in two main (and complementary) ways: 
on the one hand as compressions $PAP$ of a $\mathrm{C}^*$-algebra $A \subseteq \mathcal{B}(H)$ by a projection $P \in \mathcal{B}(H)$ (e.g.\ a spectral projection for a Dirac operator on $H$) and, on the other hand, as certain operator subsystems of $A$.
In the case of the $\mathrm{C}^*$-algebra $A = \mathrm{C}(\mathbb{T})$ of continuous complex-valued functions on the circle, the operator systems of interest are the \emph{Toeplitz system} 
\begin{align} 
\mathrm{C}(\mathbb{T})^{(N)} := \{ (T_{ij})_{i,j=0}^{N-1} \in \mathrm{M}_N(\mathbb{C}) \mid 
T_{ij} = T_{(i+1)(j+1)} \text{ for all } i,j=0, \dots, N-1 \}
\end{align}
(the addition is mod $N$), consisting of the $N \times N$ Toeplitz matrices, and the \emph{Fourier system} 
\begin{align}
\mathrm{C}(\mathbb{T})_{(N)} := \{f \in \mathbb{T} \mid \widehat{f}_n = 0 \text{ when } n \geq N\},
\end{align}
that is, the operator subsystem of $A \cong \mathrm{C}^*(\mathbb{Z})$ spanned by finitely supported sequences $(\dots,0,a_{-N+1},\dots,a_0,\dots,a_{N-1},0,\dots)$.
Using the matrix-valued Fej\'{e}r--Riesz lemma it was shown that the Toeplitz and the Fourier system are unitally completely order isomorphic to each other's operator system duals \cite{CvS21,Far21}.
As we will explain shortly, it seems appropriate to consider this duality of operator systems as a property of the group $\mathbb{Z}$ and the subset $\{1,\dots,N\} \subseteq \mathbb{Z}$;
it is then natural to ask, for which discrete groups $\Gamma$ and finite subsets $\Sigma \subseteq \Gamma$ analogous results hold.
	
To this end, for a discrete group $\Gamma$ and a finite subset $\Sigma \subseteq \Gamma$, we denote by $\mathrm{C}^*(\Gamma)^{(\Sigma)}$ the \emph{Toeplitz system}, i.e.\ the operator system of Toeplitz matrices 
\begin{align} 
(T_{st^{-1}})_{s,t\in\Sigma} \in \mathrm{M}_\Sigma \cong \mathcal{B}(\ell^2(\Sigma)),
\end{align}
and by $\mathrm{C}^*(\Gamma)_{(\Sigma\Sigma^{-1})}$ the \emph{Fourier system}, i.e.\ the operator subsystem of the group C*-algebra $\mathrm{C}^*(\Gamma)$ spanned by $\delta_{st^{-1}}$, $s,t\in\Sigma$, where $\delta_s$, $s\in\Gamma$, are the canonical unitary generators of $\mathrm{C}^*(\Gamma)$.
The Toeplitz system $\mathrm{C}^*(\Gamma)^{(\Sigma)}$ arises both as the compression of the (reduced) group $\mathrm{C}^*$-algebra by the orthogonal projection $P : \ell^2(\Gamma) \rightarrow \ell^2(\Sigma)$, and as the image of the restriction map $\rho : \mathrm{B}(\Gamma) \rightarrow \mathrm{M}_\Sigma$ of functions in the Fourier--Stieltjes algebra $\mathrm{B}(\Gamma)$ to the set $\Sigma\Sigma^{-1}$.
We mainly adopt the latter point of view.
Recalling that the dual of the group $\mathrm{C}^*$-algebra $\mathrm{C}^*(\Gamma)$ is the Fourier--Stieltjes algebra $\mathrm{B}(\Gamma)$, it is natural to expect that the Toeplitz and Fourier systems are dual to each other if and only if the restriction map $\rho$ is a quotient map.
We make this precise in terms of the involved matrix order structures by showing that the Toeplitz and the Fourier system are dual to each other if and only if every positive semi-definite Toeplitz matrix $(T_{st^{-1}})_{s,t\in\Sigma}$ with matrix entries extends to a positive semi-definite function on $\Gamma$, \autoref{thm:Characterizations-CPEP}.
	
The duality of Toeplitz matrices and sums of squares was already exploited by Rudin \cite{Rud63} in relating the extension problem for partially defined positive semi-definite functions to a Fej\'{e}r--Riesz type factorisation property
(see \cite{Sas06,Sas94,BW11} for extensive treatments, and \cite[Theorem 7.1]{BT07} for an example of the explicit use of operator system duality which is similar to our \autoref{thm:Characterizations-CPEP}). 
We capture the sums-of-squares concept by introducing the \emph{sums-of-squares operator system} $\mathrm{SOS}(\Sigma)$ and we show that the Toeplitz system $\mathrm{C}^*(\Gamma)^{(\Sigma)}$ is its operator system dual.
We infer that the Toeplitz system $\mathrm{C}^*(\Gamma)^{(\Sigma)}$ and the Fourier system $\mathrm{C}^*(\Gamma)_{(\Sigma\Sigma^{-1})}$ are each other's operator system duals if and only if the Fourier system is completely order isomorphic to the sums-of-squares system (see \autoref{thm:Characterizations-CPEP}).

Note that if the group $\Gamma$ is amenable, its group $\mathrm{C}^*$-algebra $\mathrm{C}^*(\Gamma) \cong \mathrm{C}^*_\mathrm{r}(\Gamma)$ is a coamenable compact quantum group in the sense of Woronowicz \cite{Wor87} (with comultiplication given by $\mathrm{C}^*_\mathrm{r}(\Gamma) \ni \lambda(s) \mapsto \lambda(s) \otimes \lambda(s) \in \mathrm{C}^*_\mathrm{r}(\Gamma) \otimes \mathrm{C}^*_\mathrm{r}(\Gamma)$), and the Toeplitz and Fourier systems are the operator systems arising respectively as Peter--Weyl \cite{Lei24} and Fourier truncations \cite{Rie23}.
In this way, we provide the connection between these two types of truncations.
As a byproduct we obtain that the operator system of Toeplitz matrices is not (min,max)-nuclear, thereby simplifying the corresponding argument in \cite{Far21} (see also \cite{FKPT14}).
While the non-nuclearity of the Toeplitz system is a special instance of a general characterisation of nuclear finite-dimensional operator systems \cite{Kav14}, it is in fact witnessed by the Toeplitz system itself \cite{FKPT14,Far21}. 
    
More generally, the problem of extending positive semi-definite (operator-valued) functions is usually phrased for \emph{positivity domains}, i.e.\ subsets $\Delta$ of a discrete group $\Gamma$ such that $e\in \Delta$ and $\Delta^{-1}=\Delta$.
An operator-valued function $u : \Delta \rightarrow \mathcal{B}(H)$ is called \emph{positive semi-definite} if the Toeplitz matrix 
\begin{align} 
(u(st^{-1}))_{s,t\in\Sigma} \in \mathrm{M}_\Sigma(\mathcal{B}(H)) \cong \mathcal{B}(\ell^2(\Sigma,H))
\end{align}
is positive semi-definite, for all finite subsets $\Sigma \subseteq \Gamma$ such that $\Sigma\Sigma^{-1} \subseteq \Delta$. 
A positivity domain $\Delta$ is said to have the \emph{complete extension property} if, for every $n\in\mathbb{N}$, every positive semi-definite function $u : \Delta \rightarrow \mathrm{M}_n$ admits an extension to a positive semi-definite function on $\Gamma$ (see \cite{llt}).
We observe that positive semi-definite functions on a positivity domain span a matrix ordered vector space $\mathrm{B}(\Delta)$ which we identify as the matrix ordered vector space dual of an operator system $\mathrm{UOS}(\Delta)$. 
The latter can be built up from sums-of-squares operator systems; 
in fact, we show that the operator system $\mathrm{UOS}(\Delta)$ is a colimit of a suitable diagram of sums-of-squares operator systems, \autoref{prop:Colimit}.
	
Similarly to the case of Toeplitz matrices and sums of squares, we relate the extension property for positive semi-definite functions on a positivity domain to the extension property for bounded functions on a positivity domain and a factorisation property, \autoref{th_main}.
The colimit description of the operator system $\mathrm{UOS}(\Delta)$ provides new techniques to (dis-)prove factorisation, or, equivalently, extension properties;
for instance, we discuss the positivity domain 
\begin{align}\label{eqn:Delta-five-points}
\Delta = \{(0,0), (0,\pm 1), (\pm 1,0)\} \subseteq \mathbb{Z}^2
\end{align} 
(\autoref{fig:Positivity-domain-5pts}) and show that the operator system $\mathrm{UOS}(\Delta)$ is the coproduct of two copies of the sums-of-squares systems generated by the unitaries $\delta_0, \delta_1 \in \mathbb{C}[\mathbb{Z}]$.
This allows us to compute the minimal $\mathrm{C}^*$-cover of the operator system $\mathrm{UOS}(\Delta)$ which turns out to be $\mathrm{C}^*(\mathbb{F}_2)$.
On the other hand, the minimal $\mathrm{C}^*$-cover of the Fourier system $\mathrm{C}^*(\mathbb{Z}^2)_{(\Delta)} \subseteq \mathrm{C}^*(\mathbb{Z}^2)$  generated by the same elements as the operator system $\mathrm{UOS}(\Delta)$ is $\mathrm{C}^*(\mathbb{Z}^2)$;
we conclude that the operator systems $\mathrm{C}^*(\mathbb{Z}^2)_{(\Delta)}$ and $\mathrm{UOS}(\Delta)$ are not completely order isomorphic, so the positivity domain $\Delta$ from (\ref{eqn:Delta-five-points}) does not have the complete factorisation and the complete extension property.

\begin{figure}[h]
		\centering
		\begin{tikzcd}
			{\mathrm{B}(\Gamma)} &  \mathrm{B}(\Gamma)|_\Delta & \mathrm{B}(\Delta) & {\mathrm{C}^*(\Gamma)^{(\Sigma)}}\\
			\mathrm{C}^*(\Gamma) & \mathrm{C}^*(\Gamma)_{(\Delta)}  & \mathrm{UOS}(\Delta) &  {\mathrm{SOS}(\Sigma)}
			\arrow[two heads, from=1-1, to=1-2]
			\arrow[from=1-2, to=1-3]
			\arrow[from=1-3, to=1-4]
			\arrow[from=2-4, to=2-3]
			\arrow[from=2-3, to=2-2]
			\arrow[hook', from=2-2, to=2-1]
		\end{tikzcd}
		\caption{The operator systems (in the second line) and their respective matrix ordered vector space duals (in the first line), together with the canonical ucp maps discussed in this note;
		here $\Gamma$ is a discrete group, $\Delta \subseteq \Gamma$ a positivity domain and $\Sigma \subseteq \Gamma$ a finite subset with $\Sigma\Sigma^{-1} \subseteq \Delta$.}
		\label{fig:CP-maps}
\end{figure}

\smallskip

\section*{Acknowledgments}
EK acknowledges that this research work was supported within the framework of the National Recovery and Resilience Plan Greece 2.0, funded by the European Union - NextGenerationEU (Implementation Body: HFRI. Project name: Noncommutative Analysis: Operator Systems and Nonlocality. HFRI Project Number: 015825).
ML was supported by NWO under grant OCENW.KLEIN.376, and by Max Planck Institute for Mathematics in Bonn. 
This article is based upon work from COST Action CaLISTA CA21109 supported by COST (European Cooperation in Science and Technology). 
IT was supported by NSF grants 
CCF-2115071 and DMS-2154459. 

The authors are grateful to BIRS for hospitality during the BIRS Workshop \lq\lq Operator systems and applications'' in February 2025, during which part of this work was completed. 
This material is based upon work supported by the Swedish Research Council under grant no.\ 2021-06594 while the authors were in residence at Institut Mittag-Leffler in Djursholm, Sweden, during the 2026 program on Operator Algebras and Quantum Information.

We are grateful to Matthijs Muis for valuable discussions.
Many of the results of this project have been included in ML's PhD thesis \cite{Lei25thesis}.

\section{Operator system preliminaries}\label{sec:Prelims}

\subsection{Order unit spaces} 

Let $V$ be a complex $^*$-vector space and $V_\mathrm{h}$ be the real vector space of all its hermitian elements.
A \emph{convex cone} in $V$ is a non-empty subset $\mathcal{C} \subseteq V_\mathrm{h}$ such that $[0,\infty)\cdot\mathcal{C}+\mathcal{C}\subseteq\mathcal{C}$.
A convex cone $\mathcal{C}$ in $V$ is called \emph{spanning} if $\mathcal{C}-\mathcal{C} = V_\mathrm{h}$ and \emph{proper} if $\mathcal{C} \cap (-\mathcal{C}) = \{0\}$.
A complex $^*$-vector space $V$, equipped with a proper spanning convex cone $\mathcal{C}$ is called an \emph{ordered vector space}.
We often write $V^+ := \mathcal{C}$ and call it the \emph{positive cone} of (the ordered vector space) $V$.
The positive cone $V^+$ induces a partial order on $V$ by defining $v \leq w$ if $w-v \in V^+$.
	
Given a complex $^*$-vector space $V$ with a proper convex cone $\mathcal{C}$, we say that an element $e \in \mathcal{C}$ is an \emph{order unit} if, for every hermitian element $v \in V_\mathrm{h}$, there exists a real number $r > 0$ such that $re-v \in \mathcal{C}$.
Note that if $e$ is an order unit then, for every $v \in V_\mathrm{h}$, there exists a real number $r > 0$ such that 
\begin{align} 
v = re - (re-v) \in \mathcal{C} - \mathcal{C},
\end{align}
so the cone $\mathcal{C}$ is spanning.
A $^*$-vector space $V$, equipped with a proper convex cone $V^+$ and an order unit $e_V$ is called an \emph{order unit space}.
	
For an order unit space $(V,V^+,e_V)$, let 
\begin{align}
p_V(v) := \inf\{r > 0 \mid -re_V \leq v \leq re_V\}, \ \ \ v\in V_\mathrm{h},
\end{align}
and note that $p_V$ is a semi-norm on $V_\mathrm{h}$, referred to as the \emph{order seminorm}. 
We note that any extension of $p_V$ to $V$ induces the same topology on $V$, referred to as the \emph{order topology}
\cite{PT09}. 
	
An order unit space $(V,V^+,e_V)$ (as well as its order unit $e_V$) is called \emph{Archimedean} if, for all $v \in V$, we have that 
\begin{align} 
re_V+v \geq 0 \mbox{ for all } r > 0 
\ \ \Longrightarrow \ \ 
v \geq 0.
\end{align}
If $V$ is an Archimedean order unit space, the order seminorm $p_V$ is a norm and the positive cone $V^+$ is closed in $V$ for the order topology \cite{PT09}.

If $V$ and $W$ are ordered vector spaces, a linear map $\phi : V \rightarrow W$ is called \emph{self-adjoint} if $\phi(v^*) = \phi(v)^*$, $v \in V$. 
If, furthermore, $\phi(V^+) \subseteq W^+$, the map $\phi$ is called \emph{positive}.
We denote by $\mathcal{P}(V,W)$ the cone of all positive maps $V \to W$. 
If $V$ and $W$ are order unit spaces, $\phi$ is called \emph{unital} if $\phi(e_V) = e_W$.
In this case, $\phi(V^+) \subseteq W^+$ implies that $\phi$ is self-adjoint.
If $V$ is an order unit space, unital positive linear functionals on $V$ are called \emph{states}; the set $\mathcal{S}(V)$ of all states is a convex set called the \emph{state space} of $V$. 

For ordered vector spaces $V$ and $W$, 
a positive map $\phi : V \rightarrow W$ is called an \emph{order isomorphism} if it is bijective and $\phi^{-1} : W \rightarrow V$ is positive.
An injective positive map $\phi$ is an \emph{order embedding} if $\phi$ is an order isomorphism onto its image.

For an ordered vector space $V$, let $(V^{\mathrm{d}})^+ := \mathcal{P}(V,\mathbb{C})$ be the set of positive linear functionals $V \rightarrow \mathbb{C}$ and by $V^{\mathrm{d}}$ its complex linear span.
Then $(V^{\mathrm{d}},(V^{\mathrm{d}})^+)$ is an ordered vector space;
the state space $S(V)$ is a subset of $V^{\mathrm{d}}$ which, equipped with the weak$^*$-topology, is a compact (convex) space by the Banach--Alaoglu theorem.

\subsection{Matrix order unit spaces and operator systems} 
	
For a vector space $V$, we let $\mathrm{M}_{m,n}(V)$ be the vector space of all $m$ by $n$ matrices with entries in $V$, and write $\mathrm{M}_{n}(V) = \mathrm{M}_{n,n}(V)$.
If $V$ is a $^*$-vector space then $\mathrm{M}_{n}(V)$ is also a *-vector space when equipped with the involution $(x_{i,j})_{i,j}^* := (x_{j,i}^*)_{i,j}$.
A \emph{matrix ordering} on a complex $^*$-vector space $V$ is a family $(\mathcal{C}_n)_{n\in\mathbb{N}}$ of cones such that $(\mathrm{M}_n(V),\mathcal{C}_n)$ is an ordered vector space for every $n\in\mathbb{N}$, that is \emph{compatible}
in that 
\begin{align} 
\alpha\mathcal{C}_m \alpha^* \subseteq \mathcal{C}_n \mbox{ for all }
\alpha \in \mathrm{M}_{n,m} \mbox{ and all } m,n\in\mathbb{N}.
\end{align}
We usually write $\mathrm{M}_n(V)^+$ in the place of $\cl C_n$. 
Given a matrix ordered vector space $(V,(\mathrm{M}_n(V)^+)_{n\in\mathbb{N}})$, an element $e \in V$ is called a \emph{matrix order unit} if the element
\begin{align}
e_n := \begin{pmatrix}
e &&\\
&\ddots&\\
&&e
\end{pmatrix}
\in \mathrm{M}_n(V)
\end{align}
is an order unit for the ordered vector space $(\mathrm{M}_n(V),\mathrm{M}_n(V)^+)$ for all $n\in\mathbb{N}$.
A \emph{matrix order unit space} is a matrix ordered vector space, equipped with a matrix order unit.
A matrix order unit $e$ is called \emph{Archimedean} if $e_n$ is Archimedean for all $n\in\mathbb{N}$.
An \emph{operator system} is a matrix ordered vector space, equipped with a distinguished Archimedean matrix order unit.
	
For a linear map $\phi : X \rightarrow Y$ between matrix ordered vector spaces $X$ and $Y$, we define its matrix amplification $\phi^{(n)} : \mathrm{M}_n(X) \rightarrow \mathrm{M}_n(Y)$ by letting 
\begin{align}
\phi^{(n)}((x_{ij})_{i,j}) := (\phi(x_{ij}))_{i,j}, \ \ \ (x_{ij})_{i,j} \in \mathrm{M}_n(X).
\end{align}
The map $\phi$ is called \emph{completely positive} if $\phi^{(n)} : \mathrm{M}_n(X) \rightarrow \mathrm{M}_n(Y)$ is a positive map for all $n\in\mathbb{N}$.
We denote the cone of all completely positive maps $X \rightarrow Y$ by $\mathcal{CP}(X,Y)$.
We sometimes use the abbreviations ``cp'' and ``ucp'' for ``completely positive'' and ``unital completely positive'' respectively.
We denote the categories of matrix ordered vector spaces with cp maps by $\mathbf{MVS}$, of matrix order unit spaces with ucp maps by $\mathbf{MOU}$ and of operator systems with ucp maps by $\mathbf{OSy}$.
	
Let $X$ and $Y$ be matrix ordered $^*$-vector spaces.
A completely positive map $\phi \in \mathcal{CP}(X,Y)$ is called a \emph{complete order isomorphism} if $\phi^{(n)} \in \mathcal{P}(\mathrm{M}_n(X),\mathrm{M}_n(Y))$ is an order isomorphism for every $n\in\mathbb{N}$.
We say that $X$ and $Y$ are \emph{completely order isomorphic} if there is a complete order isomorphism $X \rightarrow Y$, and write $X \cong Y$.
A completely positive map $\phi$ is a \emph{complete order embedding} if $\phi$ is a complete order isomorphism onto its image.
	
Every unital self-adjoint subspace $X \subseteq \mathcal{B}(H)$ is an operator system with positive cones defined by letting 
\begin{align}
\mathrm{M}_n(X)^+ := \mathrm{M}_n(X) \cap \mathcal{B}(H^n)^+,
\end{align}
and a matrix order unit defined by letting $e_X := \mathbf{1}_{\mathcal{B}(H)}$. 
By the Choi--Effros theorem \cite{CE77}, every operator system is unitally completely order isomorphic to such a unital self-adjoint subspace of $\mathcal{B}(H)$, for some complex Hilbert space $H$.
In particular, if $A$ is a unital $\mathrm{C}^*$-algebra then every self-adjoint subspace of $A$ that contains the unit is an operator system in a canonical fashion.
If $X \subseteq A$ is an operator system with $1_A$ as its matrix order unit, then every positive linear functional $\phi : X \rightarrow \mathbb{C}$ can be extended to a positive linear functional on $A$ by Krein's theorem. 
More generally, every cp map $\Phi : X \rightarrow \mathcal{B}(H)$ can be extended to a cp map $A \rightarrow \mathcal{B}(H)$ by Arveson's extension theorem \cite{Arv69}.
	
For a matrix ordered vector space $X$ with ordered vector space dual $X^{\mathrm{d}}$, set 
\begin{align} 
\mathrm{M}_n(X^{\mathrm{d}})^+ := \mathcal{CP}(X,\mathrm{M}_n), \ \ \ n\in\mathbb{N}.
\end{align}
If $X$ is an operator system, by \cite[Theorem 6.1]{Pau02} we have the identification
\begin{align}
\mathrm{M}_n(X^{\mathrm{d}})^+ = \mathcal{CP}(X,\mathrm{M}_n)\cong \mathcal{P}(\mathrm{M}_n(X),\mathbb{C}) = (\mathrm{M}_n(X)^{\mathrm{d}})^+.
\end{align}
If $X$ is a finite dimensional operator system, by \cite[Corollary 4.5]{CE77}, one can (non-canonically) choose an Archimedean matrix order unit for the matrix ordered vector space $X^{\mathrm{d}}$, giving it the structure of an operator system.
For (matrix) ordered vector spaces $V,W$, the adjoint $\phi^{\mathrm{d}} : W^{\mathrm{d}} \rightarrow V^{\mathrm{d}}$ of every (completely) positive map $\phi : V \rightarrow W$ is (completely) positive.

\subsection{Archimedeanisation}

Matrix order unit spaces can be turned into operator systems as follows \cite{PTT11}.
Let $X$ be a matrix order unit space.
Set 
\begin{align} 
N := \bigcap_{\phi \in \mathcal{S}(X)} \ker(\phi).
\end{align}
Then the identication 
\begin{align} 
\mathrm{M}_n(\setfrac{X}{N}) \cong \setfrac{\mathrm{M}_n(X)}{\mathrm{M}_n(N)}
\end{align}
induces a matrix order unit space structure on the quotient $\setfrac{X}{N}$. 
Define
\begin{align}
\mathcal{C}_n^\mathrm{arch}
:= 
\{&(x_{ij})_{i,j}+\mathrm{M}_n(N) \in \setfrac{\mathrm{M}_n(X)}{\mathrm{M}_n(N)} \mid \\
&r(e_X)_n+(x_{ij})_{i,j} + \mathrm{M}_n(N) \in \mathrm{M}_n(X)^+ + \mathrm{M}_n(N), \text{ for all } r>0\},
\end{align}
for all $n\in\mathbb{N}$.
We sometimes write 
\begin{align} 
\mathrm{Arch}(\mathrm{M}_n(X)^+) := \mathcal{C}_n^\mathrm{arch}
\end{align}
if explicit reference to $X$ is required.
Then the triple $(\setfrac{X}{N},(\mathcal{C}_n^\mathrm{arch})_{n\in\mathbb{N}},e_X+N)$ is an operator system, called the \emph{Archimedeanisation} of the matrix order unit space $X$; 
we denote it by $\mathrm{Arch}(X)$.
Ucp maps from the matrix order unit space $X$ to operator systems $Y$ uniquely factor through the Archimedeanisation;
more precisely, the following universal property holds:
There is a unique surjective ucp map $q : X \rightarrow \mathrm{Arch}(X)$ such that, for every ucp map $\phi : X \rightarrow Y$ with $Y$ an operator system, there exists a unique ucp map $\tilde{\phi} : \mathrm{Arch}(X) \rightarrow Y$ such that $\tilde{\phi} \circ q = \phi$.
\begin{figure}[h]
	\centering
	\begin{tikzcd}
		X & Y \\
		\mathrm{Arch}(X) &
		\arrow["\phi", from=1-1, to=1-2]
		\arrow["q"', from=1-1, to=2-1]
		\arrow["\tilde{\phi}"', from=2-1, to=1-2]
	\end{tikzcd}
	\caption{The universal property of the Archimedeanisation.}
\end{figure}

If the subspace $N$ is $\{0\}$, the cone $\mathcal{C}_n^\mathrm{arch}$ is the closure of the cone $\mathrm{M}_n(X)^+$ in the order topology.
The universal property of the Archimedeanisation implies functoriality in the following sense.

\begin{lemma}\label{lem:Fctr-Arch}
The Archimedeanisation is a functor from $\mathbf{MOU}$ to $\mathbf{OSy}$.
\end{lemma}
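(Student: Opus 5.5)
We define the functor on objects by $X \mapsto \mathrm{Arch}(X)$. On morphisms, a ucp map $\phi : X \rightarrow Y$ in $\mathbf{MOU}$ is sent to the map $\mathrm{Arch}(\phi) : \mathrm{Arch}(X) \rightarrow \mathrm{Arch}(Y)$ obtained from the universal property. Write $q_X : X \rightarrow \mathrm{Arch}(X)$ and $q_Y : Y \rightarrow \mathrm{Arch}(Y)$ for the canonical quotient maps. The composite $q_Y \circ \phi : X \rightarrow \mathrm{Arch}(Y)$ is ucp, since $q_Y$ and $\phi$ are both ucp. Its codomain $\mathrm{Arch}(Y)$ is an operator system. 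The universal property of $\mathrm{Arch}(X)$ therefore gives a unique ucp map $\mathrm{Arch}(\phi) : \mathrm{Arch}(X) \rightarrow \mathrm{Arch}(Y)$ with
\begin{align}
\mathrm{Arch}(\phi) \circ q_X = q_Y \circ \phi.
\end{align}
Concretely, $\mathrm{Arch}(\phi)(x + N_X) = \phi(x) + N_Y$. This is well defined because each state $\psi$ of $Y$ pulls back to a state $\psi \circ \phi$ of $X$, so $\phi(N_X) \subseteq N_Y$. We will not need this formula, however.

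Next we check the two functor axioms, each using only the uniqueness clause of the universal property.

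For identities, the map $\mathrm{id}_{\mathrm{Arch}(X)}$ is ucp and satisfies $\mathrm{id}_{\mathrm{Arch}(X)} \circ q_X = q_X \circ \mathrm{id}_X$. By uniqueness, $\mathrm{Arch}(\mathrm{id}_X) = \mathrm{id}_{\mathrm{Arch}(X)}$.

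For composition, take ucp maps $\phi : X \rightarrow Y$ and $\psi : Y \rightarrow Z$. Then
\begin{align}
\mathrm{Arch}(\psi) \circ \mathrm{Arch}(\phi) \circ q_X = \mathrm{Arch}(\psi) \circ q_Y \circ \phi = q_Z \circ \psi \circ \phi .
\end{align}
So $\mathrm{Arch}(\psi)\circ\mathrm{Arch}(\phi)$ is a ucp map $\mathrm{Arch}(X) \rightarrow \mathrm{Arch}(Z)$ that factors $q_Z \circ (\psi\circ\phi)$ through $q_X$. Uniqueness gives $\mathrm{Arch}(\psi \circ \phi) = \mathrm{Arch}(\psi) \circ \mathrm{Arch}(\phi)$.

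The argument is routine and has no real obstacle. The only point to watch is that the universal property is applied to $q_Y \circ \phi$, which requires that this composite is ucp and that $\mathrm{Arch}(Y)$ is an operator system. Both facts were recorded above: $q_Y$ is ucp and $\mathrm{Arch}(Y)$ is an operator system.
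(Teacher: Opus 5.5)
Your proposal is correct and follows the paper's proof: both obtain $\mathrm{Arch}(\phi)$ by applying the universal property of $\mathrm{Arch}(X)$ to the ucp map $q_Y \circ \phi$ into the operator system $\mathrm{Arch}(Y)$. Your explicit check of the identity and composition axioms via the uniqueness clause fills in what the paper leaves implicit.
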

    
\begin{proof}
Let $X$ and $Y$ be matrix order unit spaces and let $\phi : X \rightarrow Y$ be a ucp map in $\mathbf{MOU}$.
Then there are canonical ucp maps $q_X : X \rightarrow \mathrm{Arch}(X)$ and $q_Y : Y \rightarrow \mathrm{Arch}(Y)$.
By the universal property of the Archimedeanisation there is a unique map $\tilde{\phi} : \mathrm{Arch}(X) \rightarrow \mathrm{Arch}(Y)$ such that $\tilde{\phi} \circ q_X = q_Y \circ \phi$, since the right-hand side is a ucp map from $X$ to the operator system $\mathrm{Arch}(Y)$.
\end{proof}

The Archimedeanisation functor from Lemma \ref{lem:Fctr-Arch} will be denoted by $\mathrm{Arch}$.

\subsection{Quotients}\label{subsec:Quotients}
        
We describe the construction of quotients in $\mathbf{MVS}$, and refer to \cite[Section 3]{KPTT13} for further details.
To this end, let $X$ be a matrix ordered vector space and let $J \subseteq X$ be a $^*$-invariant subspace.
Assume moreover that $J$ is a \emph{complete order ideal}, i.e.\ if $x \in \mathrm M_n(J)$ and $0 \leq y \leq x$ then $y \in \mathrm{M}_n(J)$.
Set 
\begin{align}\label{eqn:Matrix-cones-quotient}
\mathrm{M}_n(\setfrac{X}{J})^+ := \{(x_{ij}+J)_{i,j} \mid \exists y \in \mathrm{M}_n(J) \text{ such that } x+y \in \mathrm{M}_n(X)^+\}.
\end{align}
One readily checks that $\mathrm{M}_n(\setfrac{X}{J})^+$ is a convex spanning cone for the $^*$-vector space $\mathrm{M}_n(\setfrac{X}{J})$ and that 
$(\mathrm{M}_n(\setfrac{X}{J})^+)_{n\in \bb{N}}$
is a compatible family of cones.
For properness, as in the proof of \cite[Proposition 2.41]{PT09}, any element of the intersection $\mathrm{M}_n(\setfrac{X}{J})^+ \cap (-\mathrm{M}_n(\setfrac{X}{J})^+)$ can be written as $x + \mathrm{M}_n(J) = -y + \mathrm{M}_n(J)$ with $x, y \in \mathrm{M}_n(X)^+$.
Then there is an element $j \in \mathrm{M}_n(J)$ such that $j = x +y \in \mathrm{M}_n(X)^+$;
hence $0 \leq x \leq j$ which implies that $x \in \mathrm{M}_n(J)$.
So $x + \mathrm{M}_n(J) = 0 + \mathrm{M}_n(J)$, i.e.\ the intersection $\mathrm{M}_n(\setfrac{X}{J})^+ \cap (-\mathrm{M}_n(\setfrac{X}{J})^+)$ is trivial.
This shows that $(\setfrac{X}{J},(\mathrm{M}_n(\setfrac{X}{J})^+)_{n\in\mathbb{N}})$ is a matrix ordered vector space again, and we say it is a \emph{quotient} of $X$.

Note that the canonical quotient map $q : X \rightarrow \setfrac{X}{J}$ is cp.
If $X$ and $Y$ are matrix ordered vector spaces and $\phi : X \rightarrow Y$ is a cp map, then $\ker(\phi)$ is clearly $^*$-invariant; moreover, it is a complete order-ideal, since $x \in \ker(\phi^{(n)}) = M_n(\ker(\phi))$ and $0 \leq y \leq x$ imply 
\begin{align}
0 \leq \phi^{(n)}(y) \leq \phi^{(n)}(x) = 0.
\end{align}
We say that the cp map $\phi : X \rightarrow Y$ is an \emph{$\mathbf{MVS}$-quotient map} if it is surjective and the induced map $\tilde{\phi} : \setfrac{V}{\ker(\phi)} \rightarrow Y$ is a complete order isomorphism.
	
Moving to the category $\mathbf{OSy}$ \cite{KPTT13}, let $X$ be an operator system.
A subspace $J \subseteq X$ is called a \emph{kernel} if there exist an operator system $Y$ and a ucp map $\phi : X \rightarrow Y$ such that $J = \ker(\phi)$ 
(equivalently, $J$ is a kernel if and only if there is a family of states $\{\phi_\iota\}_{\iota \in I} \subseteq \mathcal{S}(X)$ such that $J = \bigcap_{\iota \in I} \ker(\phi_\iota)$).
Defining $\mathrm{M}_n(\setfrac{X}{J})^+$ as in (\ref{eqn:Matrix-cones-quotient}) and $e_{\setfrac{X}{J}} := e_X+J$, the quotient $\setfrac{X}{J}$ inherits the structure of a matrix order unit space from $X$.
The operator system $\mathrm{Arch}(\setfrac{X}{J})$ is referred to as an \emph{operator system quotient}; 
it possesses the universal property that every ucp map from $X$ to an operator system factors uniquely and canonically through it.
A kernel $J \subseteq X$ is called \emph{completely proximinal} if 
\begin{align}
\mathrm{M}_n(\setfrac{X}{J})^+ 
= 
\mathrm{Arch}(\mathrm{M}_n(\setfrac{X}{J})^+), \ \ \ n\in\mathbb{N};
\end{align}
in particular, in this case $\setfrac{X}{J} = \mathrm{Arch}(\setfrac{X}{J})$.

\subsection{Tensor products}

We recall some elements of the theory of tensor products for operator systems from \cite{KPTT11}.
Let $X$ and $Y$ be operator systems and denote by $X \otimes Y$ the corresponding algebraic tensor product.
An \emph{operator system structure} on $X \otimes Y$ is a family $\tau := (\mathcal{C}_n)_{n\in\mathbb{N}}$ which satisfies the following properties:
\begin{enumerate}
\item[(T1)] The triple $(X \otimes Y, (\mathcal{C}_n)_{n\in\mathbb{N}}, e_X \otimes e_Y)$ is an operator system, denoted $X \otimes_\tau Y$;
\item[(T2)] We have $\mathrm{M}_k(X)^+ \otimes \mathrm{M}_\ell(Y)^+ \subseteq \mathcal{C}_{k\ell}$, for all $k,\ell \in\mathbb{N}$;
\item[(T3)] For all ucp maps $\phi : X \rightarrow \mathrm{M}_k$, $\psi : Y \rightarrow \mathrm{M}_\ell$, the map $\phi \otimes \psi : X \otimes_\tau Y \rightarrow \mathrm{M}_{k\ell}$ is ucp.
\end{enumerate}
An operator system structure $\tau$ is \emph{functorial} if it satisfies:
\begin{enumerate}
\item[(T4)] For all operator systems $S, T$ and ucp maps $\phi : X \rightarrow S$, $\psi : Y \rightarrow T$, the map $\phi \otimes \psi$ is a ucp map $X \otimes_\tau Y \rightarrow S \otimes_\tau T$.
\end{enumerate}

The class of compatible families of cones over $X \otimes Y$ is non-empty; 
indeed, one such family is formed by the cones
\begin{align}\label{eqn:D-max-cones}
\mathcal{D}_n^\mathrm{max}(X,Y)
:= \{A(x \otimes y)A^* \mid x \in \mathrm{M}_k(X)^+, y \in \mathrm{M}_\ell(Y)^+, A \in \mathrm{M}_{n,k\ell}\}, 
\ \ \ n \in \mathbb{N}.
\end{align}
Its Archimedeanisation defines a functorial operator system structure  on $X \otimes Y$, called the \emph{maximal tensor product} and denoted by \black $\mathrm{max}$.
It is maximal in the sense that, for every compatible family $(\mathcal{C}_n)_{n\in\mathbb{N}}$ of positive cones over $X \otimes Y$, we have 
\begin{align}
\mathcal{D}_n^\mathrm{max}(X,Y) \subseteq \mathcal{C}_n, \ \ \ n\in\mathbb{N}.
\end{align}
In particular, if $\tau$ is any operator system structure on $X \otimes Y$ then the canonical identity map $X\otimes_{\max} Y\to X\otimes_{\tau} Y$ is completely positive.

Assume that $X$ and $Y$ are only matrix ordered vector spaces (and not necessarily operator systems).
The cones $\mathcal{D}_n^\mathrm{max}(X,Y)$ can still be defined in the same way. 
Recall, further, that a bilinear map $\phi : X\times Y\to Z$ (where $Z$ is another matrix ordered vector space) is called \emph{jointly completely positive} if 
\begin{align} 
(\phi(x_{i,j},y_{p,q}))_{i,j,p,q}\in \mathrm{M}_{nm}(Z)^+
\end{align}
whenever $(x_{i,j})_{i,j}\in \mathrm{M}_n(X)^+$ and $(y_{k,l})_{k,l}\in \mathrm{M}_m(Y)^+$. 
It follows from the arguments in \cite{KPTT11} that a bilinear map $\phi : X\times Y\to Z$ is jointly completely positive precisely when its linearisation $\tilde{\phi} : X\otimes Y\to Z$ satisfies the properties
\begin{align}
\tilde{\phi}^{(n)}(\mathcal{D}_n^\mathrm{max}(X,Y))\subseteq 
\mathrm{M}_n(Z)^+, \ \ \ n\in \bb{N}.
\end{align}
The same universal property, in the case where $X$, $Y$ and $Z$ 
are operator systems, characterises the operator system
tensor product $X\otimes_{\max} Y$. 
	
The \emph{minimal tensor product} of two operator systems $X$ and $Y$, which are by the Choi--Effros theorem unitally completely order embedded in some C$^*$-algebras $A$ and $B$ respectively, is the operator system structure inherited from the inclusion of $X \otimes Y$ in the spatial tensor product $A \otimes_\mathrm{min} B$.
This tensor product is independent of the choice of inclusions $X \subseteq A$ and $Y \subseteq B$.
Moreover it is a functorial tensor product and it is \black minimal in the sense that, for every compatible family $(\mathcal{C}_n)_{n\in\mathbb{N}}$ of positive cones over $X \otimes Y$, we have that 
\begin{align} 
\mathcal{C}_n \subseteq \mathrm{M}_n(X \otimes_\mathrm{min} Y)^+, \ \ \ n\in\mathbb{N}.
\end{align}
If $X, Y$ are finite dimensional operator systems, we have
canonical complete order isomorphisms
\begin{align}
(X \otimes_\mathrm{max} Y)^\mathrm{d} 
\cong 
X^\mathrm{d} \otimes_\mathrm{min} Y^\mathrm{d}
\ \ \mbox{ and } \ \
(X \otimes_\mathrm{min} Y)^\mathrm{d} 
\cong 
X^\mathrm{d} \otimes_\mathrm{max} Y^\mathrm{d}
\end{align}
(see \cite{KPTT11} and the references therein).

For two operator systems $X, Y$ and two operator system structures $\sigma, \tau$ on $X \otimes Y$, we say that the pair $(X,Y)$ is \emph{$(\sigma,\tau)$-nuclear} if $X \otimes_\sigma Y \cong X \otimes_\tau Y$ canonically.
We say that an operator system $X$ is $(\sigma,\tau)$-nuclear if the pair $(X,Y)$ is $(\sigma,\tau)$-nuclear, for every operator system $Y$.

\subsection{$\mathrm{C}^*$-covers}

A \emph{$\mathrm{C}^*$-cover} of an operator system $X$ is a unital C$^*$-algebra $A$ together with a unital complete order embedding $\iota : X \rightarrow A$ such that $A$ is generated by $\iota(X)$.
Every operator system $X$ admits a maximal and a minimal $\mathrm{C}^*$-cover, denoted respectively by $(\mathrm{C}^*_\mathrm{max}(X),\iota_\mathrm{max})$ and $(\mathrm{C}^*_\mathrm{min}(X),\iota_\mathrm{min})$;
they are characterised by the fact that, for any $\mathrm{C}^*$-cover $(A,\iota)$ there are unique $^*$-homomorphisms $\mathrm{C}^*_\mathrm{max}(X) \rightarrow A \rightarrow \mathrm{C}^*_\mathrm{min}(X)$ such that the diagram in \autoref{fig:C*-max-min} commutes.
\begin{figure}[h]
	\centering
	\begin{tikzcd}
		& {\mathrm{C}^*_\mathrm{max}(X)}\\
		{X} & {A}\\
		& {\mathrm{C}^*_\mathrm{min}(X)}
		\arrow["{\iota_\mathrm{max}}", from=2-1, to=1-2]
		\arrow[from=1-2, to=2-2]
		\arrow["{\iota}", from=2-1, to=2-2]
		\arrow[from=2-2, to=3-2]
		\arrow["{\iota_\mathrm{min}}"', from=2-1, to=3-2]
	\end{tikzcd}
	\caption{The universal properties of the maximal and minimal $\mathrm{C}^*$-cover.}
	\label{fig:C*-max-min}
\end{figure}
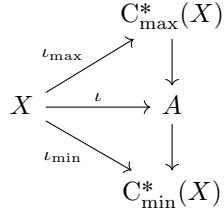
The maximal $\mathrm{C}^*$-cover is sometimes called the \emph{universal $\mathrm{C}^*$-cover}, denoted by $\mathrm{C}^*_\mathrm{u}(X)$, and its existence was shown in \cite[Proposition 8]{KW98}.
The minimal $\mathrm{C}^*$-cover is often called the \emph{$\mathrm{C}^*$-envelope} and denoted by $\mathrm{C}^*_\mathrm{e}(X)$;
its existence was first shown in \cite{Ham79} through the existence of the injective envelope; see also \cite{Pau02}.
A dilation theoretic proof was later obtained in \cite{DMcC05} through the existence of maximal dilations.

\subsection{Coproducts and amalgamated direct sums}\label{ss_ads}

Let $X$ and $Y$ be operator systems and let $V$ be an operator system, together with unital complete order embeddings $X \overset{\iota_X}{\hookleftarrow} V \overset{\iota_Y}{\hookrightarrow} Y$.
The \emph{amalgamated direct sum (of $X$ and $Y$ over $V$)} \cite{KL09} is the unique operator system $X \oplus_V Y$ which comes with ucp maps $X \overset{\phi_X}{\rightarrow} X \oplus_V Y \overset{\phi_Y}{\leftarrow} Y$ and which satisfies the following universal property:
For every operator system $Z$ and ucp maps 
\begin{align} 
X \overset{\psi_X}{\rightarrow} Z \overset{\psi_Y}{\leftarrow} Y
\text{ with } 
\psi_X \circ \iota_X = \psi_Y \circ \iota_Y,
\end{align}
there is a unique ucp map $\psi : X \oplus_V Y \rightarrow Z$ such that 
\begin{align}
\psi \circ \phi_X = \psi_X
\ \ \mbox{ and } \ \
\psi \circ \phi_Y = \psi_Y.
\end{align}
In other words, the amalgamated direct sum is the pushout in the category $\mathbf{OSy}$ of the diagram $X \overset{\iota_X}{\hookleftarrow} V \overset{\iota_Y}{\hookrightarrow} Y$, \emph{cf.}\ \autoref{fig:Amalgamated-direct-sum}.
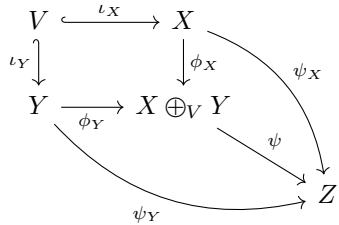
\begin{figure}[h]
	\centering
	\begin{tikzcd}
		V & X & \\
		Y & X \oplus_V Y & \\
		& & Z
		\arrow["\iota_X", hook, from=1-1, to=1-2]
		\arrow["\iota_Y"', hook', from=1-1, to=2-1]
		\arrow["\phi_X", from=1-2, to=2-2]
		\arrow["\phi_Y"', from=2-1, to=2-2]
		\arrow["\psi", from=2-2, to=3-3]
		\arrow["\psi_X", bend left=30, from=1-2, to=3-3]
		\arrow["\psi_Y"', bend right=30, from=2-1, to=3-3]
	\end{tikzcd}
	\caption{The universal property of the amalgamated direct sum.}
	\label{fig:Amalgamated-direct-sum}
\end{figure}
The existence of the amalgamated direct sum follows from the existence of the amalgamated free product of $\mathrm{C}^*$-algebras and by realizing $X \oplus_V Y$ as an operator subsystem of $\mathrm{C}^*_\mathrm{max}(X) \ast_{\mathrm{C}^*_\mathrm{max}(V)} \mathrm{C}^*_\mathrm{max}(Y)$.
From the universal property of the amalgamated direct sum one obtains the fact that the ucp maps $\phi_X$ and $\phi_Y$ are complete order embeddings, and using the universal property of the amalgamated free product of $\mathrm{C}^*$-algebras, one readily checks by diagram chasing that 
\begin{align}
\mathrm{C}^*_\mathrm{max}(X \oplus_V Y) 
\cong 
\mathrm{C}^*_\mathrm{max}(X) \ast_{\mathrm{C}^*_\mathrm{max}(V)} \mathrm{C}^*_\mathrm{max}(Y),
\end{align}
\emph{cf.}\ \cite[Proposition 2.6]{KL09}, see also \cite[Theorem 4.24]{MT18}.

More concretely, if $V$ is finite dimensional, we have 
\begin{align}
X \oplus_V Y = \frac{X \oplus Y}{J}
\text{ for }
J = \{(\iota_X(v),-\iota_Y(v)) \mid v \in V\}.
\end{align}
In fact, in this case $J$ is the kernel of a ucp map onto an operator system and it is furthermore completely proximinal \cite[Proposition 2.4]{Kav14}, so that 
\begin{align}
\mathrm{Arch}\left(\setfrac{X \oplus Y}{J}\right) 
= 
\setfrac{X \oplus Y}{J}.
\end{align}
One readily checks the universal property of the amalgamated direct sum for $\setfrac{X \oplus Y}{J}$.
If we do not specify the operator subsystem $V$, we usually assume $V \cong \mathbb{C}e_X \cong \mathbb{C}e_Y$ and just speak of the \emph{amalgamated direct sum} or \emph{coproduct} of $X$ and $Y$, which we then denote by $X \oplus_\mathbf{1} Y$, \emph{cf.}\ \cite{Kav14,Fri14,Cha23}.
	
Recall that an operator system $X$ is \emph{hyperrigid} inside its minimal $\mathrm{C}^*$-cover if for every non-degenerate representation $\pi$ of $\mathrm{C}^*_\mathrm{min}(X)$ on a Hilbert space $H$, the only ucp extension of the map $\pi|_X : X \rightarrow \mathcal{B}(H)$ is $\pi$ itself \cite{Arv11}.
If the operator systems $X$ and $Y$ are hyperrigid \cite{Arv08} in their respective minimal $\mathrm{C}^*$-covers, we have 
\begin{align}
\mathrm{C}^*_\mathrm{min}(X \oplus_\mathbf{1} Y) 
\cong 
\mathrm{C}^*_\mathrm{min}(X) \ast_\mathbf{1} \mathrm{C}^*_\mathrm{min}(Y),
\end{align}
see \cite[Theorem 4.11]{Cha23}.
In particular, this is the case if $X$ and $Y$ contain \emph{enough unitaries} in their respective minimal $\mathrm{C}^*$-covers in the sense of \cite{Kav14}; 
in this case the isomorphism $\mathrm{C}^*_\mathrm{min}(X \oplus_\mathbf{1} Y) \cong \mathrm{C}^*_\mathrm{min}(X) \ast_\mathbf{1} \mathrm{C}^*_\mathrm{min}(Y)$ is shown in \cite{FKPT18}.

\subsection{Colimits}
    
The amalgamated direct sum discussed in Subsection \ref{ss_ads} may be seen as a special instance of a \emph{colimit} in the category $\mathbf{OSy}$ (\emph{cf.}\ the universal properties of amalgamated direct sums and colimits illustrated in the diagrams \autoref{fig:Amalgamated-direct-sum} and \autoref{fig:Colimit}).
We recall this general category theoretic notion.
Let $(A,\leq_A)$ be a partially ordered set and consider a set of objects $X_\alpha$, $\alpha \in A$, and morphisms $\phi_{\beta,\alpha} : X_\alpha \rightarrow X_\beta$, for all $\alpha, \beta \in A$ with $\alpha \leq_A \beta$.
We say that the pair $((X_\alpha)_{\alpha \in A}, (\phi_{\beta,\alpha})_{\alpha\leq_A\beta})$ is a \emph{diagram} if $\phi_{\gamma,\beta} \circ \phi_{\beta,\alpha} = \phi_{\gamma,\alpha}$, for all $\alpha \leq_A \beta \leq_A \gamma$.
A \emph{colimit} of the diagram $((X_\alpha)_{\alpha \in A}, (\phi_{\beta,\alpha})_{\alpha\leq_A\beta})$ in $\mathbf{C}$ is an object $X$ of $\mathbf{C}$, equipped with morphisms $\phi_{\infty,\alpha} : X_\alpha \rightarrow X$, such that $\phi_{\infty,\beta} \circ \phi_{\beta,\alpha} = \phi_{\infty,\alpha}$, for all $\alpha \leq_A \beta$, satisfying the following universal property:
For every object $Y$ of $\mathbf{C}$ and morphisms $\psi_{\infty,\alpha} : X_\alpha \rightarrow Y$, such that $\psi_{\infty,\beta} \circ \phi_{\beta,\alpha} = \psi_{\infty,\alpha}$ for all $\alpha \leq_A \beta$, there exists a morphism $\psi : X \rightarrow Y$ such that $\psi \circ \phi_{\infty,\alpha} = \psi_{\infty,\alpha}$ for all $\alpha \in A$;
in this case, we write $\underset{\longrightarrow}{\lim}_\mathbf{C} X_\alpha := X$.
Using the universal property, one shows that colimits are unique if they exist.
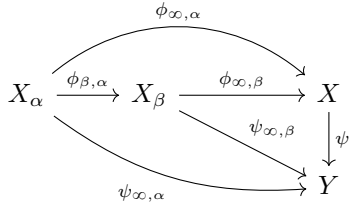
\begin{figure}[h]
	\centering
	\begin{tikzcd}
		X_\alpha & X_\beta && X \\
		&&& Y
		\arrow["\phi_{\beta,\alpha}", from=1-1, to=1-2]		
        \arrow["\phi_{\infty,\alpha}", bend left=40, from=1-1, to=1-4]
		\arrow["\phi_{\infty,\beta}", from=1-2, to=1-4]
		\arrow["\psi_{\infty,\alpha}"', bend right=20, from=1-1, to=2-4]
		\arrow["\psi_{\infty,\beta}", from=1-2, to=2-4]
		\arrow["\psi", from=1-4, to=2-4]
	\end{tikzcd}
	\caption{The universal property of the colimit.}
	\label{fig:Colimit}
\end{figure}

If the index set $A$ is the set of natural numbers or more generally a directed set, diagrams are often called \emph{inductive} or \emph{directed systems}, and their colimits (if they exist) are called \emph{inductive} or \emph{direct limits}.

In the categories $\mathbf{C^*Alg}$ of unital $\mathrm{C}^*$-algebras with unital $^*$-homomorphisms and $\mathbf{OSy}$, direct limits exist \cite[II.8.2]{Bla06}, \cite[Theorem 4.11]{MT18}.
As explained in \cite{MT18}, direct limits in $\mathbf{OSy}$ are constructed by taking the direct limit in the matrix order unit space category $\mathbf{MOU}$ and Archimedeanizing.
The direct limit in $\mathbf{MOU}$ is in turn constructed as the matrix level-wise direct limit of order unit spaces, the existence of which is also shown in \cite{MT18}.

In \autoref{sec:Psd-fct-pos-domain} we will encounter a diagram in $\mathbf{OSy}$ which is not a directed system;
however, we have a natural candidate for its colimit in $\mathbf{MOU}$, and the following lemma ensures that it admits a colimit in $\mathbf{OSy}$ as well.
First note that, due to the functoriality of the Archimedeanisation \autoref{lem:Fctr-Arch}, 
if $((X_\alpha)_{\alpha \in A}, (\phi_{\beta,\alpha})_{\alpha\leq_A\beta})$ is a diagram in $\mathbf{MOU}$ then
$((\mathrm{Arch}(X_\alpha))_{\alpha \in A}, (\mathrm{Arch}(\phi_{\beta,\alpha}))_{\alpha\leq_A\beta})$ is a diagram in $\mathbf{OSy}$.

\begin{lemma}\label{lem:Arch-colimit-commute}
Let $(A,\leq_A)$ be a partially ordered set and $((X_\alpha)_{\alpha \in A}, (\phi_{\beta,\alpha})_{\alpha\leq_A\beta})$ be a diagram in $\mathbf{MOU}$ admitting a colimit in $\mathbf{MOU}$.
Then the diagram $((\mathrm{Arch}(X_\alpha))_{\alpha \in A},$ $(\mathrm{Arch}(\phi_{\beta,\alpha}))_{\alpha\leq_A\beta})$ admits a colimit in $\mathbf{OSy}$ and 
\begin{align}
	\underset{\longrightarrow}{\lim}_\mathbf{OSy} \mathrm{Arch}(X_\alpha)
	= \mathrm{Arch}\left(\underset{\longrightarrow}{\lim}_\mathbf{MOU} X_\alpha\right).
\end{align}
\end{lemma}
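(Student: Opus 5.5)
Let $X := \underset{\longrightarrow}{\lim}_\mathbf{MOU} X_\alpha$ with structure maps $\phi_{\infty,\alpha} : X_\alpha \rightarrow X$, and let $q_\alpha : X_\alpha \rightarrow \mathrm{Arch}(X_\alpha)$ and $q : X \rightarrow \mathrm{Arch}(X)$ be the canonical ucp maps. The plan is to verify directly that $\mathrm{Arch}(X)$, equipped with the maps $\mathrm{Arch}(\phi_{\infty,\alpha}) : \mathrm{Arch}(X_\alpha) \rightarrow \mathrm{Arch}(X)$ furnished by \autoref{lem:Fctr-Arch}, satisfies the universal property of the colimit in $\mathbf{OSy}$. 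The argument only combines the universal property of the Archimedeanisation with that of the $\mathbf{MOU}$-colimit; no analysis of cones is needed.

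\emph{Step 1: compatibility.} By functoriality of $\mathrm{Arch}$,
\begin{align}
\mathrm{Arch}(\phi_{\infty,\beta}) \circ \mathrm{Arch}(\phi_{\beta,\alpha}) = \mathrm{Arch}(\phi_{\infty,\beta}\circ\phi_{\beta,\alpha}) = \mathrm{Arch}(\phi_{\infty,\alpha}), \qquad \alpha \leq_A \beta.
\end{align}
To make this rigorous, recall that $\mathrm{Arch}(\psi)$ is defined as the unique ucp map with $\mathrm{Arch}(\psi)\circ q_X = q_Y \circ \psi$. Both sides above precompose with the surjection $q_\alpha$ to give $q \circ \phi_{\infty,\alpha}$, so they agree.

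\emph{Step 2: existence of the mediating map.} Let $Y$ be an operator system with ucp maps $\psi_{\infty,\alpha} : \mathrm{Arch}(X_\alpha) \rightarrow Y$ satisfying $\psi_{\infty,\beta}\circ \mathrm{Arch}(\phi_{\beta,\alpha}) = \psi_{\infty,\alpha}$. Set $\chi_\alpha := \psi_{\infty,\alpha}\circ q_\alpha : X_\alpha \rightarrow Y$. These are ucp maps in $\mathbf{MOU}$, since every operator system is a matrix order unit space. They are compatible:
\begin{align}
\chi_\beta \circ \phi_{\beta,\alpha} = \psi_{\infty,\beta}\circ q_\beta\circ\phi_{\beta,\alpha} = \psi_{\infty,\beta}\circ \mathrm{Arch}(\phi_{\beta,\alpha})\circ q_\alpha = \chi_\alpha.
\end{align}
The universal property of $X$ in $\mathbf{MOU}$ then yields a ucp map $\chi : X \rightarrow Y$ with $\chi\circ\phi_{\infty,\alpha} = \chi_\alpha$. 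Since $Y$ is an operator system, the universal property of the Archimedeanisation gives a ucp map $\psi : \mathrm{Arch}(X)\rightarrow Y$ with $\psi \circ q = \chi$. Finally,
\begin{align}
\psi\circ\mathrm{Arch}(\phi_{\infty,\alpha})\circ q_\alpha = \psi\circ q\circ\phi_{\infty,\alpha} = \chi_\alpha = \psi_{\infty,\alpha}\circ q_\alpha,
\end{align}
and surjectivity of $q_\alpha$ gives $\psi\circ\mathrm{Arch}(\phi_{\infty,\alpha}) = \psi_{\infty,\alpha}$.

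\emph{Step 3: uniqueness.} Uniqueness of $\psi$ should also be checked, since this is what makes the colimit unique up to isomorphism and hence justifies the equality in the statement. Suppose $\psi'$ also satisfies $\psi'\circ\mathrm{Arch}(\phi_{\infty,\alpha}) = \psi_{\infty,\alpha}$ for all $\alpha$. Then $\psi'\circ q$ is a ucp map $X \rightarrow Y$ with $\psi' \circ q \circ \phi_{\infty,\alpha} = \chi_\alpha$ for every $\alpha$. Uniqueness in the $\mathbf{MOU}$-colimit gives $\psi'\circ q = \chi = \psi \circ q$, and surjectivity of $q$ gives $\psi' = \psi$.

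The main obstacle I anticipate is bookkeeping rather than substance. One must use that operator systems and ucp maps form a full subcategory of $\mathbf{MOU}$, so that $Y$ and the maps $\chi_\alpha$ are admissible test data for the $\mathbf{MOU}$-colimit. One must also keep track of where the surjectivity of the maps $q_\alpha$ and $q$ is used to cancel them on the right.
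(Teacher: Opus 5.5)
Your proposal is correct and follows essentially the same route as the paper. Like the paper, you factor the test cocone first through the $\mathbf{MOU}$-colimit via $\psi_{\infty,\alpha}\circ q_\alpha$ and then through $\mathrm{Arch}(X)$ by the universal property of the Archimedeanisation. You also make explicit two points the paper leaves implicit, namely the compatibility of the cocones and the uniqueness of the mediating map, both obtained by cancelling the surjections $q_\alpha$ and $q$.
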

	
\begin{figure}[h]
	\centering
	\begin{tikzcd}
		X_\alpha && X_\beta &&& X & \\
		\mathrm{Arch}(X_\alpha) && \mathrm{Arch}(X_\beta) &&& \mathrm{Arch}(X) & Z
		\arrow["\phi_{\beta,\alpha}", from=1-1, to=1-3]
		\arrow["\phi_{\infty,\alpha}", bend left=20, from=1-1, to=1-6]
		\arrow["\phi_{\infty,\beta}", from=1-3, to=1-6]
		\arrow["q_\alpha", from=1-1, to=2-1]
		\arrow["q_\beta", from=1-3, to=2-3]
		\arrow["\mathrm{Arch}(\phi_{\beta,\alpha})", from=2-1, to=2-3]
		\arrow["\mathrm{Arch}(\phi_{\infty,\alpha})"', bend right=20, from=2-1, to=2-6]
		\arrow["\mathrm{Arch}(\phi_{\infty,\beta})", from=2-3, to=2-6]
		\arrow["q_X", from=1-6, to=2-6]
		\arrow["\psi_{\infty,\alpha}"', bend right=40, from=2-1, to=2-7]
		\arrow["\psi_{\infty,\beta}"', bend right=20, from=2-3, to=2-7]
		\arrow["\psi_0", from=1-6, to=2-7]
		\arrow["\psi", from=2-6, to=2-7]
	\end{tikzcd}
	\caption{The diagram for the proof of \autoref{lem:Arch-colimit-commute}.}
\end{figure}
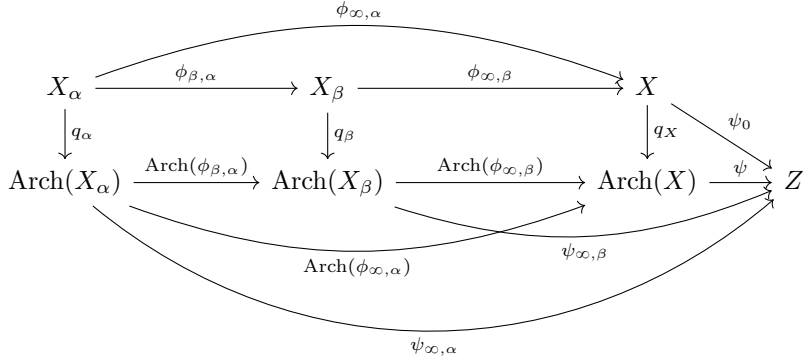

\begin{proof}
Let $X := \underset{\longrightarrow}{\lim}_\mathbf{MOU}X_\alpha$.
Assume that $Z$ is an operator system, equipped with ucp maps $\psi_{\infty,\alpha} : \mathrm{Arch}(X_\alpha) \rightarrow Z$ such that $\psi_{\infty,\beta} \circ \mathrm{Arch}(\phi_{\beta,\alpha}) = \psi_{\infty,\alpha}$, for all $\alpha\leq_A\beta$.
Since $Z$ is in particular a matrix order unit space, there exists a unique ucp map $\psi_0 : X \rightarrow Z$ such that $\psi_0 \circ \phi_{\infty,\alpha} = \psi_{\infty,\alpha} \circ q_\alpha$.
By the universal property of the Archimedeanisation, there exists a unique ucp map $\psi : \mathrm{Arch}(X) \rightarrow Z$ such that $\psi \circ q = \psi_0$.
By the functoriality of the Archimedeanisation (\autoref{lem:Fctr-Arch}), we obtain ucp maps $\mathrm{Arch}(\phi_{\infty,\alpha}) : \mathrm{Arch}(X_\alpha) \rightarrow \mathrm{Arch}(X)$ which satisfy $\psi \circ \mathrm{Arch}(\phi_{\infty,\alpha}) = \psi_{\infty,\alpha}$ for all $\alpha\in A$.
This shows that $\mathrm{Arch}(X) = \underset{\longrightarrow}{\lim}_\mathbf{OSy} \mathrm{Arch}(X_\alpha)$ as claimed.
\end{proof}

\subsection{Operator spaces}\label{subsec:OSp}

We now recall some basic operator space notions, and refer the reader to \cite{Pau02,Pis03,BlM04,ER20} for further details.
An \emph{operator space} is a complex vector space $X$ equipped with a family of matrix norms $\lVert\cdot\rVert_{n}$ on $\mathrm{M}_{n}(X)$, $n\in\mathbb{N}$, which is compatible in the sense that:
\begin{enumerate}
\item $\lVert A x B \rVert_{m} \leq \lVert A \rVert \lVert x \rVert_{n} \lVert B \rVert$, for all elements $x \in \mathrm{M}_{n}(X)$, and scalar matrices $A\in \mathrm{M}_{m,n}$
and $B\in \mathrm{M}_{n,m}$;
\item $\lVert x \oplus y \rVert_{m+n} = \max \{\lVert x \rVert_{n}, \lVert y \rVert_{m}\}$, for all $x \in \mathrm{M}_{n}(X), y \in \mathrm{M}_{m}(X)$.
\end{enumerate}
A linear map $\phi : X \rightarrow Y$ between operator spaces is called completely bounded (cb, for short) if 
\begin{align} 
\lVert \phi \rVert_\mathrm{cb} 
:= 
\sup_{n\in\mathbb{N}} \sup_{\lVert x \rVert_n \leq 1} \lVert \phi^{(n)}(x) \rVert_n < \infty;
\end{align}
in this case, $\lVert \phi \rVert_\mathrm{cb}$ is referred to as the \emph{cb-norm} of $\phi$.
We denote the set of all cb maps $\phi : X \rightarrow Y$ by $\mathcal{CB}(X,Y)$.
A cb map $\phi \in \mathcal{CB}(X,Y)$ is called completely contractive (cc, for short) if $\lVert \phi \rVert_\mathrm{cb} \leq 1$ and completely isometric if $\lVert \phi^{(n)}(x) \rVert_n = \lVert x \rVert_n$, for all $n\in\mathbb{N}$, $x \in \mathrm{M}_n(X)$.
We denote the category of operator spaces, with cc maps as morphisms, by $\mathbf{OSp}$.
	
Let $X \subseteq \mathcal{B}(H)$ be a closed subspace.
Then $X$ is an operator space by isometrically identifying the space of matrices $\mathrm{M}_n(X)$ as a subspace of $\mathcal{B}(H^n)$.
Every cb map $\phi: X \rightarrow \mathcal{B}(K)$ extends to a cb map $\tilde{\phi} : \mathcal{B}(H) \rightarrow \mathcal{B}(K)$ with $\lVert \tilde{\phi} \rVert_\mathrm{cb} = \lVert \phi \rVert_\mathrm{cb}$ by \emph{Wittstock's extension theorem} \cite{Wit84}.

By \emph{Ruan's theorem} \cite{Rua88}, every operator space is completely isometrically isomorphic to a subspace of $\mathcal{B}(H)$, for some Hilbert space $H$, implying that
every operator system is canonically (unitally completely order isomorphic) to an operator space. 
For a cp map $\phi$ we have $\phi(\mathbf{1}) = \lVert \phi \rVert_\mathrm{cb}$.
A unital linear map is cp if and only if it is cc, and it is a complete order embedding if and only if it is completely isometric.
By Wittstock's decomposition theorem \cite{Wit81}, see also \cite[Theorem 8.5]{Pau02}, every cb map on an operator system 
with values in $\cl B(H)$, for some Hilbert space $H$,
is a linear combination of four cp maps.
Given an operator space $X$, its dual $X^{\mathrm{d}}$ is an operator space when equipped with the matrix norms inherited by declaring the identification $\mathrm{M}_n(X^{\mathrm{d}}) \cong \mathcal{CB}(X,\mathrm{M}_n)$ completely isometric \cite[1.2.20]{BlM04}.
Moreover, the matrix ordered vector space and operator space structures on $X^{\mathrm{d}}$ are compatible in the sense that the proper convex cones $\mathcal{CP}(X,\mathrm{M}_n)$ span $\mathcal{CB}(X,\mathrm{M}_n)$ and are closed (for the cb-norm);
this shows that $X^{\mathrm{d}}$ is a \emph{(positively generated) matrix ordered operator space}; see \cite{HKM23} for further details. 
		
Recall from \cite[1.2.14]{BlM04} that if $X$ is an operator space and $Y \subseteq X$ a closed subspace the quotient $\setfrac{X}{Y}$ is an operator space, for the matrix-norms given by 
\begin{align}
\lVert (x_{ij}+Y)_{i,j} \rVert_n 
:= 
\inf\{\lVert x+y \rVert_n \mid y \in \mathrm{M}_n(Y)\}.
\end{align}
For operator spaces $X$ and $Z$, a cc map $\phi : X \rightarrow Z$ is called an $\mathbf{OSp}$-quotient map if it is onto and the induced map $\tilde{\phi} : \setfrac{X}{\ker(\phi)} \rightarrow Z$ is a complete isometry (i.e.\ the inverse map $\tilde{\phi}^{-1} : Z \rightarrow \setfrac{X}{\ker(\phi)}$ is cc).
	
A map $\phi : X \rightarrow Y$ between operator spaces is cb if and only if $\phi \otimes \mathbf{I}^Z : X \otimes Z \rightarrow Y \otimes Z$ is cb, for every operator space $Z$ \cite{DCH85}, see also \cite[Proposition 2.1.1]{Pis03}.
It follows that a map $\phi : X \rightarrow Y$ between operator systems is cp if and only if $\phi \otimes \mathbf{I}^Z : X \otimes Z \rightarrow Y \otimes Z$ is cp, for every operator system $Z$.

\subsection{Complete order embeddings and quotient maps}
		
The following proposition is central to \autoref{thm:Characterizations-CPEP} and \autoref{th_main} below.
We give a full proof of the equivalence of (1) and (3) for which we could not find a reference in the infinite dimensional case.
Note that this is in line with \cite[Proposition 1.8]{FP12} which implies the equivalence of (1) and (3) in the case that the operator systems $X$ and $Y$ are finite-dimensional.

\begin{proposition}\label{prop:Equiv-embeddings-quotients}
Let $X$ and $Y$ be operator systems and let $\phi : X \rightarrow Y$ be a ucp map.
The following statements are equivalent:
\begin{enumerate}
\item[(i)] $\phi : X \rightarrow Y$ is a complete order embedding;
\item[(ii)] $\phi : X \rightarrow Y$ is a complete isometry;
\item[(iii)] $\phi^{\mathrm{d}} : Y^{\mathrm{d}} \rightarrow X^{\mathrm{d}}$ is an $\mathbf{MVS}$-quotient map;
\item[(iv)] $\phi^{\mathrm{d}} : Y^{\mathrm{d}} \rightarrow X^{\mathrm{d}}$ is an $\mathbf{OSp}$-quotient map.
\end{enumerate}
\end{proposition}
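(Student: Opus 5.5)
The plan is to prove (i)$\Leftrightarrow$(ii) directly, then (ii)$\Leftrightarrow$(iv) by operator space duality, and finally (i)$\Leftrightarrow$(iii) by a separation argument in the dual.

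\textbf{(i)$\Leftrightarrow$(ii).} This is the standard fact recalled in \autoref{subsec:OSp}: a unital map between operator systems is a complete order embedding if and only if it is a complete isometry.
\begin{itemize}
\item Forward direction: $\phi$ is ucp, so $\phi$ is completely contractive. If $\phi^{-1}$ is ucp on the operator system $\phi(X)$, then $\phi^{-1}$ is also completely contractive, so $\phi$ is completely isometric.
\item Converse: a unital complete isometry onto $\phi(X)$ has a unital completely contractive inverse, and unital cc maps are cp.
\end{itemize}

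\textbf{(ii)$\Leftrightarrow$(iv).} This is operator space duality, as in \cite[1.4.3]{BlM04}.
\begin{itemize}
\item Suppose (ii) holds. Identify $\mathrm{M}_n(X^{\mathrm{d}}) = \mathcal{CB}(X,\mathrm{M}_n)$ completely isometrically. Given $f \in \mathcal{CB}(X,\mathrm{M}_n)$, the map $f\circ\phi^{-1}$ is defined on $\phi(X)\subseteq Y$ and has the same cb norm. Wittstock's extension theorem extends it to $g \in \mathcal{CB}(Y,\mathrm{M}_n)$ with $\lVert g\rVert_{\mathrm{cb}}=\lVert f\rVert_{\mathrm{cb}}$ and $\phi^{\mathrm{d}}(g)=g\circ\phi=f$. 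Hence $\phi^{\mathrm{d}}$ is onto, and the quotient norm of $f$ is at most $\lVert f\rVert_{\mathrm{cb}}$. The reverse inequality holds because $\phi^{\mathrm{d}}$ is cc. So $\phi^{\mathrm{d}}$ is an $\mathbf{OSp}$-quotient map.
\item Suppose (iv) holds. For $x\in \mathrm{M}_n(X)$, compute $\lVert x\rVert_n$ as a supremum of $\lVert f^{(n)}(x)\rVert$ over cc maps $f:X\to \mathrm{M}_n$. Each such $f$ lifts, up to $\varepsilon$, to some $g:Y\to \mathrm{M}_n$ with $\lVert g\rVert_{\mathrm{cb}}<1+\varepsilon$ and $g\circ\phi=f$. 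Therefore $\lVert x\rVert_n \le (1+\varepsilon)\lVert\phi^{(n)}(x)\rVert_n$, which gives (ii).
\end{itemize}

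\textbf{(i)$\Rightarrow$(iii).}
\begin{itemize}
\item Let $f\in \mathcal{CP}(X,\mathrm{M}_n)$. Under (i), $f\circ\phi^{-1}$ is cp on the operator subsystem $\phi(X)\subseteq Y$, since $\phi(X)$ contains $e_Y$.
\item Arveson's extension theorem, applied after embedding $Y$ into some $\mathcal{B}(H)$, gives a cp extension $g:Y\to \mathrm{M}_n$ with $\phi^{\mathrm{d}}(g)=f$.
\item So $\phi^{\mathrm{d}}$ maps the positive cone of $\mathrm{M}_n(Y^{\mathrm{d}})$ onto that of $\mathrm{M}_n(X^{\mathrm{d}})$ at every level. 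Surjectivity follows because these cones span.
\item Positivity of a class in $\mathrm{M}_n(Y^{\mathrm{d}}/\ker\phi^{\mathrm{d}})$ means some representative is cp. Hence $\widetilde{\phi^{\mathrm{d}}}$ is cp, and the inverse is cp by the lifting just shown, so $\phi^{\mathrm{d}}$ is an $\mathbf{MVS}$-quotient map.
\end{itemize}

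\textbf{(iii)$\Rightarrow$(i).} This direction is the main obstacle, since we must pass from dual cones back to cones in $X$ in infinite dimensions.
\begin{itemize}
\item Injectivity: if $\phi(x)=0$, then every state, and hence every functional $f\in X^{\mathrm{d}}$, lifts to some $g$ with $g(\phi(x))=0$. So $f(x)=0$ for all $f$, and $x=0$ because states separate points of an Archimedean operator system.
\item Setup for order embedding: take $x\in \mathrm{M}_n(X)_{\mathrm h}$ with $\phi^{(n)}(x)\ge0$ and suppose $x\notin \mathrm{M}_n(X)^+$.
\item Separation: the cone $\mathrm{M}_n(X)^+$ is closed in the order topology because $X$ is Archimedean. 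A Hahn--Banach separation, via a Krein/Effros-type argument for Archimedean order unit spaces, gives a positive functional $\omega$ on $\mathrm{M}_n(X)$ with $\omega(x)<0$.
\item Transfer to the dual: by the identification $\mathcal{P}(\mathrm{M}_n(X),\mathbb{C})\cong\mathcal{CP}(X,\mathrm{M}_n)$ from \cite[Theorem 6.1]{Pau02}, $\omega$ corresponds to $f\in \mathrm{M}_n(X^{\mathrm{d}})^+$.
\item Contradiction: by (iii), $f=\phi^{\mathrm{d}}(g)$ for some $g\in\mathrm{M}_n(Y^{\mathrm{d}})^+$, up to an element of $\mathrm{M}_n(\ker\phi^{\mathrm{d}})$, which acts trivially on the image of $\phi$. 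Then $\omega(x)$ is a pairing of the cp map $g$ with the positive element $\phi^{(n)}(x)$, so $\omega(x)\ge0$, a contradiction.
\end{itemize}

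The careful points are ensuring that the separating functional is positive rather than merely hermitian, and checking that the pairing correspondence respects the matrix indices.
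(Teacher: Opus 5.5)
Your proposal is correct and follows the paper's proof. For (i)$\Leftrightarrow$(ii) and (ii)$\Leftrightarrow$(iv) you use the same standard facts (Wittstock extension, and norming $\mathrm{M}_n(X)$ by completely contractive maps into $\mathrm{M}_n$), and for (i)$\Rightarrow$(iii) you use Arveson extension of $f\circ\phi^{-1}$ exactly as the paper does. For (iii)$\Rightarrow$(i) the paper dualises $(\widetilde{\phi^{\mathrm{d}}})^{-1}$ and restricts to $\phi(X)\subseteq\phi(X)^{\mathrm{dd}}$, using the complete order embedding of an operator system into its bidual; you instead separate directly with a state on $\mathrm{M}_n(X)$ and lift the corresponding cp map via (iii). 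This is the same argument with the bidual step unwound, and it spares you from identifying $\phi(X)^{\mathrm{d}}$ with $Y^{\mathrm{d}}/\ker(\phi^{\mathrm{d}})$ as matrix ordered spaces.
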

	
\begin{proof}
The equivalence [(i)$\Leftrightarrow$(ii)] is well-known, see e.g.\ \cite[1.3.3]{BlM04}.
For the equivalence [(ii)$\Leftrightarrow$(iv)] see \cite[1.4.3]{BlM04};
we note that the implication [(ii)$\Rightarrow$(iv)] requires (the matrix-valued version of) Wittstock's extension theorem to extend cb maps $X \rightarrow \mathrm{M}_n$ to cb maps $Y \rightarrow \mathrm{M}_n$ (if $X$ is completely isometrically identified with its image $\phi(X) \subseteq Y$).
The converse implication uses the completely isometric inclusion of an operator space in its bidual.

\smallskip

\noindent
[(i)$\Rightarrow$(iii)].
Assume that $\phi : X \rightarrow Y$ is a complete order embedding.
In particular, $\phi$ is one-to-one, so $\phi^{\mathrm{d}}$ is cp and surjective.
Denote by 
\begin{align}
q : Y^{\mathrm{d}} \rightarrow \setfrac{Y^{\mathrm{d}}}{\ker(\phi^{\mathrm{d}})}
\end{align}
the canonical cp quotient map and by 
\begin{align}
\widetilde{\phi^{\mathrm{d}}} : \setfrac{Y^{\mathrm{d}}}{\ker(\phi^{\mathrm{d}})} \rightarrow X^{\mathrm{d}}
\end{align}
the induced cp map with the property  $\widetilde{\phi^{\mathrm{d}}} \circ q = \phi^{\mathrm{d}}$.
In order to prove statement (iii), we need to show that the inverse $(\widetilde{\phi^{\mathrm{d}}})^{-1}$ is cp.
To this end, let $x^{\mathrm{d}} \in \mathrm{M}_n(X^{\mathrm{d}})^+$.
We view $x^{\mathrm{d}}$ as a cp map (denoted in the same way) $x^{\mathrm{d}} : X \rightarrow \mathrm{M}_n$.
By assumption, $\phi^{-1} : \phi(X) \rightarrow X$ is a well-defined ucp map; hence 
\begin{align}
x^{\mathrm{d}} \circ \phi^{-1} \in \mathcal{CP}(\phi(X),\mathrm{M}_n).
\end{align}
By Arveson's extension theorem there exists a cp map $y^{\mathrm{d}} \in \mathcal{CP}(Y,\mathrm{M}_n)$ that extends $x^{\mathrm{d}} \circ \phi^{-1}$, i.e.\
\begin{align}\label{eqn:Pullback-y*-x*}
y^{\mathrm{d}}(\phi(x)) =  x^{\mathrm{d}} \circ \phi^{-1}(\phi(x)) = x^{\mathrm{d}}(x), \ \ \ x \in X.
\end{align} 
Viewing $x^{\mathrm{d}}$ and $y^{\mathrm{d}}$ as elements of $\mathrm{M}_n(X^{\mathrm{d}})^+$ and $\mathrm{M}_n(Y^{\mathrm{d}})^+$, respectively, equation (\ref{eqn:Pullback-y*-x*}) reads $(\phi^{\mathrm{d}})^{(n)}(y^{\mathrm{d}}) = x^{\mathrm{d}}$.
It follows that
\begin{align}
((\widetilde{\phi^{\mathrm{d}}})^{-1})^{(n)} (x^{\mathrm{d}})
&= ((\widetilde{\phi^{\mathrm{d}}})^{-1})^{(n)} \left((\phi^{\mathrm{d}})^{(n)}(y^{\mathrm{d}})\right) \\
&= ((\widetilde{\phi^{\mathrm{d}}})^{-1})^{(n)} \circ (\widetilde{\phi^{\mathrm{d}}})^{(n)} \circ q^{(n)}(y^{\mathrm{d}}) \\
&= q^{(n)}(y^{\mathrm{d}}) 
\in \mathrm{M}_n(\setfrac{Y^{\mathrm{d}}}{\ker(\phi^{\mathrm{d}})})^+.
\end{align} 
This shows the map $(\widetilde{\phi^{\mathrm{d}}})^{-1}$ is cp, as claimed.

\smallskip

\noindent
[(iii)$\Rightarrow$(i)].
Assume that $\phi^{\mathrm{d}} : Y^{\mathrm{d}} \rightarrow X^{\mathrm{d}}$ is an $\mathbf{MVS}$-quotient map.
In particular, $\phi^{\mathrm{d}}$ is surjective, so $\phi$ is one-to-one with an inverse map $\phi^{-1} : \phi(X) \rightarrow X$.
Identifying the dual $\phi(X)^{\mathrm{d}}$ of the image $\phi(X) \subseteq Y$ with the quotient $\setfrac{Y^{\mathrm{d}}}{\phi(X)^\perp} \cong \setfrac{Y^{\mathrm{d}}}{\ker(\phi^{\mathrm{d}})}$ we may view the dual map $(\phi^{-1})^{\mathrm{d}} : X^{\mathrm{d}} \rightarrow \phi(X)^{\mathrm{d}}$ of the inverse of $\phi$ as the inverse of the dual map of $\phi$ (modulo $\phi(X)^\perp$), i.e.\ 
\begin{align}
(\phi^{-1})^{\mathrm{d}}
= 
(\widetilde{\phi^{\mathrm{d}}})^{-1} : X^{\mathrm{d}} \rightarrow \setfrac{Y^{\mathrm{d}}}{\ker(\phi^{\mathrm{d}})}.
\end{align}
Note that the map $(\widetilde{\phi^{\mathrm{d}}})^{-1}$ is cp by the assumption that $\phi^*$ is an $\mathbf{MVS}$-quotient map.
Hence the dual map 
\begin{align}
((\widetilde{\phi^{\mathrm{d}}})^{-1})^{\mathrm{d}} : (\setfrac{Y^{\mathrm{d}}}{\ker(\phi^{\mathrm{d}})})^{\mathrm{d}} \cong \phi(X)^{\mathrm{dd}} \rightarrow X^{\mathrm{dd}}
\end{align}
is cp.
Recall from \cite[Proposition 6.2]{KPTT13} that an operator system is canonically unitally completely order embedded into its bidual.
We thus consider the restriction of $((\widetilde{\phi^{\mathrm{d}}})^{-1})^{\mathrm{d}}$ to the operator subsystem $\phi(X) \subseteq \phi(X)^{\mathrm{dd}}$; 
we claim that 
\begin{align}
((\widetilde{\phi^{\mathrm{d}}})^{-1})^{\mathrm{d}}|_{\phi(X)} = \phi^{-1}.
\end{align}
Indeed, for $x \in X$, viewing $\phi(x)$ as an element of $\phi(X)^{\mathrm{dd}}$, we have
\begin{align}
((\widetilde{\phi^{\mathrm{d}}})^{-1})^{\mathrm{d}}((\phi(x)))
= 
\phi(x) \circ (\widetilde{\phi^{\mathrm{d}}})^{-1}
= 
\phi(x) \circ (\phi^{-1})^{\mathrm{d}}
= 
\phi^{-1} \circ \phi(x)
= x.
\end{align}
It follows that the map
\begin{align}
\phi^{-1} = ((\widetilde{\phi^{\mathrm{d}}})^{-1})^{\mathrm{d}}|_{\phi(X)} : \phi(X) \rightarrow X
\end{align}
is cp, and hence $\phi$ is a complete order embedding as claimed.
\end{proof}

\begin{remark}
We point out that the statements in \autoref{prop:Equiv-embeddings-quotients} are also equivalent to $\phi^{\mathrm{d}}$ being a quotient map in the category $\mathbf{MOS}$ of matrix ordered operator spaces with cc cp maps as morphisms \cite{HKM23}.
In fact, this is implied by (3) and (4) together, and the fact that a $\mathbf{MOS}$-quotient map is in particular an $\mathbf{OSp}$-quotient map.
\end{remark}

\section{Positive semi-definite functions and the Fourier operator system}\label{ss_opval}
	
Let $\Gamma$ be a discrete group and $H$ a Hilbert space.
If $\Sigma \subseteq \Gamma$ is a finite subset and $X \subseteq \mathcal{B}(H)$ is an operator space, we denote by $\mathrm{M}_\Sigma(X)$ the set of matrices over $X$ indexed by $\Sigma \times \Sigma$, identified with the space $\B(\ell^2(\Sigma)) \otimes_{\min} X \subseteq \mathcal{B}(\ell^2(\Sigma)\otimes H)$.
We set $\mathrm{M}_\Sigma := \mathrm{M}_\Sigma(\mathbb{C})$.
We denote by $\delta_s$, $s \in \Gamma$, the unitary generators of the universal group C$^*$-algebra $\mathrm{C}^*(\Gamma)$.
	
The set ${\rm F}(\Gamma,\cl B(H))$ of all $\cl B(H)$-valued functions $u : \Gamma \to \cl B(H)$ 
is a $^*$-vector space under the operation $u^*(s) := u(s^{-1})^*$, $s\in \Gamma$. 
We recall that a function $u : \Gamma \to \cl B(H)$ is called \emph{positive semi-definite} if 
\begin{align}
(u(st^{-1}))_{s,t\in\Sigma} \in \mathrm{M}_\Sigma(\cl B(H))^+,
\text{ for all finite subsets } 
\Sigma \subseteq \Gamma.
\end{align}
Observe that for the finite set $\{e, s\}$ with $s \in \Gamma$, positive semi-definiteness implies that
\begin{align}
u(s)^* = u(s^{-1})
\ \ \mbox{ and } \ \
u(s)^* u(s) \leq \lVert u(e) \rVert u(e);
\end{align}
therefore $u(e)$ is selfadjoint and $H' := \overline{u(e)H}$ is reducing for $u$ with
\begin{align}
u(s) = \left[\begin{matrix} u(s)|_{H'} & 0 \\ 0 & 0 \end{matrix}\right], \text{ for all } s \in \Gamma.
\end{align}
We denote by $\mathrm{B}(\Gamma,H)^+$ the cone of all positive semi-definite $\cl B(H)$-valued functions, and by $\mathrm{B}(\Gamma,H)$ the linear span of $\mathrm{B}(\Gamma,H)^+$ inside ${\rm F}(\Gamma,\cl B(H))$.
We call $\mathrm{B}(\Gamma,H)$ the \emph{$\mathcal{B}(H)$-valued Fourier--Stieltjes algebra} of $\Gamma$.
Clearly, if $H=\mathbb{C}$ we obtain the usual \emph{Fourier--Stieltjes algebra}, denoted by $\mathrm{B}(\Gamma)$.
	
The following proposition is part of folklore; we include its proof for the convenience of the reader. 

\begin{proposition}\label{prop:cpmaps}
Let $\Gamma$ be a discrete group. 
\begin{itemize}
\item[(i)] If $u\in \mathrm{B}(\Gamma,H)^+$, then there exists a (necessarily) unique completely positive map $\phi_u : \mathrm{C}^*(\Gamma)\to \cl B(H)$ such that $\phi_u(\delta_s) = u(s)$, for all $s \in \Gamma$. 
\item[(ii)] If $\phi : \mathrm{C}^*(\Gamma)\to \cl B(H)$ is a completely positive map, then there exists a (necessarily) unique positive semi-definite function $u_\phi\in \mathrm{B}(\Gamma,H)^+$ such that $\phi = \phi_{u_\phi}$. 
\end{itemize}
\end{proposition}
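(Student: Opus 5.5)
For (i), the plan is a GNS-type (Naimark dilation) construction. Given $u\in \mathrm{B}(\Gamma,H)^+$, consider the algebraic tensor product $\mathbb{C}[\Gamma]\odot H$, i.e.\ finitely supported functions $\Gamma\to H$, with the sesquilinear form
\begin{align}
\Big\langle \sum_s \delta_s\otimes\xi_s, \sum_t \delta_t\otimes\eta_t\Big\rangle_u := \sum_{s,t}\langle u(t^{-1}s)\xi_s,\eta_t\rangle .
\end{align}
Positive semi-definiteness of $u$ on the finite set $\Sigma^{-1}$, where $\Sigma$ is the support, shows this form is positive semi-definite. The choice of $\Sigma^{-1}$ matches the convention $(u(st^{-1}))_{s,t}$, and I would fix the ordering of variables so that the exact matrix appearing is $(u(st^{-1}))_{s,t\in\Sigma'}$ for a suitable finite $\Sigma'$. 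Quotienting by null vectors and completing gives a Hilbert space $K$. Left translation $\delta_r\otimes\xi\mapsto\delta_{rs}\otimes\xi$ preserves the form, since $u((rt)^{-1}rs)=u(t^{-1}s)$. It therefore descends to unitaries $\pi(r)$ on $K$ and yields a unitary representation, hence by universality a $^*$-representation $\pi:\mathrm{C}^*(\Gamma)\to\mathcal{B}(K)$. Define $V:H\to K$ by $V\xi=[\delta_e\otimes\xi]$. Then $\|V\xi\|^2=\langle u(e)\xi,\xi\rangle$, so $V$ is bounded with $\|V\|^2\le\|u(e)\|$, and $\langle V^*\pi(\delta_s)V\xi,\eta\rangle=\langle u(s^{-1}e)\ldots\rangle$. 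After orienting the form correctly, this equals $\langle u(s)\xi,\eta\rangle$. Thus $\phi_u:=V^*\pi(\cdot)V$ is completely positive by Stinespring's easy direction, and $\phi_u(\delta_s)=u(s)$. Uniqueness follows because the $\delta_s$ span a dense subspace and cp maps are bounded.

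For (ii), given a cp map $\phi$, set $u_\phi(s):=\phi(\delta_s)$. For finite $\Sigma$, the matrix $(\delta_s\delta_t^{-1})_{s,t\in\Sigma}=c^*c$ with $c$ the row $(\delta_s^*)_{s\in\Sigma}$ — equivalently the column $(\delta_s)_s$ times its adjoint — is positive in $\mathrm{M}_\Sigma(\mathrm{C}^*(\Gamma))$. Applying $\phi^{(\Sigma)}$ gives $(u_\phi(st^{-1}))_{s,t}\ge0$, so $u_\phi\in\mathrm{B}(\Gamma,H)^+$. The map $\phi_{u_\phi}$ agrees with $\phi$ on generators, so $\phi=\phi_{u_\phi}$ by the uniqueness in (i). Uniqueness of $u_\phi$ is immediate from evaluation at $\delta_s$.

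The main obstacle is only bookkeeping. One must orient the inner product so that positivity of the form is exactly the paper's condition $(u(st^{-1}))\ge0$ rather than its transpose or inverse variant, and check that the translation action, not the inverse one, gives $\phi_u(\delta_s)=u(s)$ rather than $u(s^{-1})$. I would fix this by a direct computation with $\Sigma=\{e,s\}$ before writing the general argument.
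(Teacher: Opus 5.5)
Your proof is correct. Part (ii) is essentially the paper's, but for (i) you take a genuinely different route.

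The paper never builds the dilation by hand. It proceeds in four steps:
\begin{enumerate}
\item it treats $u(e)=\mathbf{1}$ by citing a dilation theorem \cite[Theorem 7.1]{SFBK10};
\item it reduces invertible $u(e)$ to this case via $u(e)^{-1/2}u(\cdot)u(e)^{-1/2}$;
\item it handles non-invertible $u(e)$ on finite-dimensional $H$ by restricting to the range of $u(e)$;
\item it treats arbitrary $H$ by compressing to finite-dimensional subspaces and taking a BW-limit point of the resulting cp maps.
\end{enumerate}
Your GNS construction, with $V\xi=[\delta_e\otimes\xi]$ merely bounded rather than isometric, does all of this at once. The non-invertibility of $u(e)$ never matters, including its possibly non-closed range when $\dim H=\infty$, which is what forces the paper into the compactness argument. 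This is because $V^*\pi(\cdot)V$ is completely positive for every bounded $V$. So your argument is self-contained and constructive, and it even yields a minimal Stinespring dilation of $\phi_u$. The paper's argument is shorter only because it outsources the unital dilation, and it pays for this with a case analysis and a non-constructive limit.

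The orientation you worry about is already right as written. Your $\langle x,x\rangle_u$ is the quadratic form of $(u(t^{-1}s))_{t,s\in\Sigma}=(u(t's'^{-1}))_{t',s'\in\Sigma^{-1}}$, which is exactly the paper's condition on $\Sigma^{-1}$. Also $\langle \pi(\delta_s)V\xi,V\eta\rangle_K=\langle u(e^{-1}s)\xi,\eta\rangle=\langle u(s)\xi,\eta\rangle$, so your ``$u(s^{-1}e)$'' is just a slip. The check is worthwhile: the alternative form $\sum_{s,t}\langle u(s^{-1}t)\xi_s,\eta_t\rangle$ would need positivity of the block transpose $(u(ts^{-1}))_{s,t}$. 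That can fail for matrix-valued $u$ on non-abelian groups, and that form would also return $u(s^{-1})$ instead of $u(s)$.
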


\begin{proof}
(i) 
Since $\mathrm{C}^*(\Gamma)$ is densely spanned by $\{\delta_s\}_{s \in \Gamma}$, any completely positive extension $\phi_u$ of $u$ is necessarily unique.
Hence we need to show only existence of the completely positive extension map $\phi_u$.
We proceed in cases.

\smallskip
\noindent
{\em Case 1}. Suppose first that $u(e) = \mathbf{1}_{\mathcal{B}(H)}$.
By \cite[Theorem 7.1]{SFBK10}, there exists a Hilbert space $K \supseteq H$ and a *-representation $\pi : \mathrm{C}^*(\Gamma)\to \cl B(K)$ such that
\begin{align}
u(s) = P_H \pi(\delta_s)|_H, \text{ for all } s \in \Gamma.
\end{align}
Then the completely positive map $\phi_u := P_H \pi(\cdot) |_H$ is the required extension.

\smallskip
\noindent
{\em Case 2.} Suppose that $u(e)$ is an invertible element of $\mathcal{B}(H)$.
Let $u' \in {\rm F}(\Gamma,\cl B(H))$ be the $\mathcal{B}(H)$-valued function given by
\begin{align}
u'(s) := u(e)^{-1/2} u(s) u(e)^{-1/2}, \text{ for all } s \in \Gamma.
\end{align}
Then for all finite subsets $\Sigma \subseteq \Gamma$, setting $n := \lvert \Sigma \rvert$, we have
\begin{align}
(u'(st^{-1}))_{s,t \in \Sigma}
=
(u(e)^{-1/2} \otimes \mathbf{1}_n) (u(st^{-1}))_{s,t\in\Sigma} (u(e)^{-1/2} \otimes \mathbf{1}_n).
\end{align}
Since $u(e)$ is a positive operator we get that $u' \in \mathrm{B}(\Gamma,H)^+$ with $u'(e) = \mathbf{1}_{\mathcal{B}(H)}$.
By Case 1 we obtain a unital completely positive map $\phi_{u'}$ such that 
\begin{align}
\phi_{u'}(\delta_s) = u'(s), \text{ for all } s \in \Gamma.
\end{align}
Therefore the map 
\begin{align}
\phi_u(\cdot) := u(e)^{1/2} \phi_{u'}(\cdot) u(e)^{1/2}
\end{align}
is completely positive and satisfies
\begin{align}
\phi_{u}(\delta_s) = u(e)^{1/2} u'(s) u(e)^{1/2} = u(s),
\end{align}
for all $s \in \Gamma$.

\smallskip
\noindent
{\em Case 3.} Suppose that $H$ is finite dimensional and that $u(e) \neq \mathbf{1}_{\mathcal{B}(H)}$.
Set $H' := \overline{u(e) H} = u(e) H$ and observe that $u(e)|_{H'}$ is positive and invertible.
Moreover $H'$ is reducing for every $u(s)$ with
\begin{align}
u(s) = \begin{bmatrix} u(s)|_{H'} & 0 \\ 0 & 0 \end{bmatrix}, \text{ for all } s \in \Gamma.
\end{align}
Therefore the $\mathcal{B}(H')$-valued function $u'$ given by
\begin{align}
u'(s) := u(s) |_{H'}, \text{ for all } s \in \Gamma,
\end{align}
is positive semi-definite with $u'(e)$ invertible in $\mathcal{B}(H')$.
By Case 2, we obtain a completely positive map $\phi_{u'} : \mathrm{C}^*(\Gamma)\to \mathcal{B}(H')$ such that $\phi_{u'}(\delta_s) = u'(s)$ for all $s \in \Gamma$.
Set 
\begin{align}
\phi_u := \phi_{u'} \oplus 0_{H \ominus H'},
\end{align}
and we get
\begin{align}
\phi_u(\delta_s) = u'(s) \oplus 0_{H \ominus H'} = u(s),
\end{align}
for all $s \in \Gamma$.

\smallskip
\noindent
{\em Case 4.} Suppose that $H$ is an arbitrary Hilbert space and that $u(e) \neq \mathbf{1}_{\mathcal{B}(H)}$.
Let
\begin{align}
\mathcal{F}(H) := \{F \leq H \mid \dim F < \infty\}.
\end{align}
For every $F \in \mathcal{F}(H)$ define the $\mathcal{B}(F)$-valued function
\begin{align}
u_F : \Gamma \to \mathcal{B}(F); \quad u_F(s) = P_F u(s)|_F.
\end{align}
By Case 3, there exists a completely positive map
\begin{align}
\phi_{u_{F}} : \mathrm{C}^*(\Gamma) \to \mathcal{B}(F); \quad \phi_{u_F}(\delta_s) = u_F(s).
\end{align}
Since $\mathrm{C}^*(\Gamma)$ is an operator system we have
\begin{align}
\|\phi_{u_F}\|_{\rm cb} = \|\phi_{u_F}(\delta_e)\| = \|u_F(e)\| \leq \|u(e)\|,
\end{align}
for all $F \in \mathcal {F}(H)$.
We make the identification of $F$ as the subspace of $H$ so that every $\phi_{u_F}$ is $\mathcal{B}(H)$-valued.
The set
\begin{align}
\mathfrak{C} := \{\phi : \mathrm{C}^*(\Gamma) \to \mathcal{B}(H) \mid \text{ $\phi$ is completely positive and } \|\phi\|_{\rm cb} \leq \|u(e)\| \}
\end{align}
is compact with respect to Arveson's BW topology (see \cite{Arv69}), that is, the topology of point-weak* convergence; see \cite[Theorem 7.4]{Pau02}.
Therefore there is a subnet which converges to some $\phi \in \mathfrak{C}$.
Then we can follow the proof of \cite[Theorem 7.5]{Pau02} to conclude that $\phi$ is the requried extension.
That is, for $s \in \Gamma$ and $x,y \in H$ we let $F$ be the finite dimensional space spanned by $x, y \in H$.
Then for every $F' \supseteq F$ we get
\begin{align}
u_F(s) = P_F u(s)|_F = P_{F} u_{F'}(s) |_F,
\end{align}
and so
\begin{align}
\langle u(s)x,y \rangle 
=
\langle u_F(s)x,y \rangle 
=
\langle u_{F'}(s)x,y \rangle 
= 
\langle \phi_{u_{F'}}(\delta_s)x,y \rangle.
\end{align}
Since the set $\{F' \in \mathcal{F} : F' \supseteq F\}$ is cofinal, we get
\begin{align}
\langle u(s)x,y \rangle = \langle \phi(\delta_s)x,y \rangle,
\end{align}
and thus $\phi_u$ is the required extension of $u$.

\smallskip
\noindent        
(ii) 
Let $u_{\phi} : \Gamma\to \cl B(H)$ be given by $u_{\phi}(s) := \phi(\delta_s)$. 
Given $s_i\in \Gamma$, $i = 1,\dots,k$, we have that $(\delta_{s_i s_j^{-1}})_{i,j}\in \mathrm{M}_k(\mathrm{C}^*(\Gamma))^+$; 
thus, $(u_{\phi}(s_i s_j^{-1}))_{i,j}\in \mathrm{M}_k(\cl B(H))^+$, that is, $u_{\phi}$ is positive semi-definite. 
Since $\mathrm{C}^*(\Gamma)$ is densely spanned by $\{\delta_s\}_{s \in \Gamma}$ and the extension $\phi_{u_\phi}$ of $u_\phi$ is unique by (i), we get $\phi_{u_{\phi}} = \phi$. 
\end{proof}
	
We note that 
the uniqueness clauses in Proposition \ref{prop:cpmaps} 
imply that the assignments 
\begin{align}
u \mapsto \phi_u
\ \ \mbox{ and } \ \
\phi \mapsto u_\phi
\end{align}
are inverse to each other.
\autoref{prop:cpmaps} gives a canonical identification of the cones
\begin{align}\label{eqn:Pos-def-fct-cp-maps-Gamma}
\mathrm{B}(\Gamma,H)^+ \cong \mathcal{CP}(\mathrm{C}^*(\Gamma),\mathcal{B}(H))
\end{align} 
implying that the family of cones $(\mathrm{B}(\Gamma,\mathbb{C}^n)^+)_{n\in\mathbb{N}}$ is compatible;
in particular $(\mathrm{B}(\Gamma),(\mathrm{B}(\Gamma,\mathbb{C}^n)^+)_{n\in\mathbb{N}})$ becomes a matrix ordered vector space through this identification.

We define a norm on $\mathrm{B}(\Gamma,H)$ by 
\begin{align}
\lVert u \rVert_{\mathrm{B}(\Gamma,H)} := \lVert \phi_u \rVert_{\rm cb}.
\end{align}
In the case that $H = \mathbb{C}^n$, this can be viewed as a norm on $\mathrm{M}_n(\mathrm{B}(\Gamma))$ and in fact the Fourier--Stieltjes algebra with the matrix norms $\lVert\cdot\rVert_{\mathrm{M}_n(\mathrm{B}(\Gamma))} := \lVert\cdot\rVert_{\mathrm{B}(\Gamma,\mathbb{C}^n)}$ is the operator space dual of $\mathrm{C}^*(\Gamma)$, see \cite[Subsection 1.2.20]{BlM04}.
Together with the above established matrix order structure this turns $\mathrm{B}(\Gamma)$ into a dual matrix ordered operator space, as studied e.g.\ in \cite{HKM23}.
	
\begin{definition}\cite{llt}
A non-empty subset $\Delta \subseteq \Gamma$ is called a \emph{positivity domain} if it is symmetric, i.e.\ $\Delta^{-1} := \{s^{-1} \mid s\in\Delta \}= \Delta$, and unital, i.e.\ $e \in \Delta$.
\end{definition}

For a subset $\Delta \subseteq \Gamma$, we write 
\begin{align}
\mathrm{S}[\Delta] := \mathrm{span}\{\delta_s \mid s\in\Delta\}.
\end{align}
If $\Delta \subseteq \Gamma$ is a positivity domain, the vector space $\mathrm{S}[\Delta]$  will be equipped with the involution, given by $\delta_s^* := \delta_{s^{-1}}$, $s\in \Delta$.

\begin{example}
If $\Sigma \subseteq \Gamma$ is any subset, then the subset $\Sigma\Sigma^{-1} := \{st^{-1}\mid s,t\in\Sigma\}$ of $\Gamma$ is a positivity domain.
We note that not all positivity domains 
have the latter form; see \autoref{fig:Positivity-domain-5pts}. 
\end{example}

\begin{definition}
Let $\Delta \subseteq \Gamma$ be a positivity domain.
The \emph{Fourier system} $\mathrm{C}^*(\Gamma)_{(\Delta)}$ associated with $\Delta$ has underlying $^*$-vector space $\mathrm{S}[\Delta]$ and operator system structure, arising from the inclusion 
$\mathrm{S}[\Delta]\subseteq \mathrm{C}^*(\Gamma)$.
\end{definition}

By definition, the positive cones of the Fourier system $\mathrm{C}^*(\Gamma)_{(\Delta)}$ are given by 
\begin{align}
\mathrm{M}_n(\mathrm{C}^*(\Gamma)_{(\Delta)})^+ = \mathrm{M}_n(\mathrm{C}^*(\Gamma))^+ \cap \mathrm{M}_n(\mathrm{S}[\Delta]),
\end{align}
and the matrix order unit $e_{\mathrm{C}^*(\Gamma)_{(\Delta)}}$ is the unit $\delta_e$ of $\mathrm{C}^*(\Gamma)$.
For a positivity domain $\Delta$, we set 
\begin{align}
\mathrm{B}(\Gamma,H)^+|_\Delta := \{u|_{\Delta} \mid u\in \mathrm{B}(\Gamma,H)^+\};
\end{align}
it is clear that $\mathrm{B}(\Gamma,H)^+|_\Delta$ is a cone.
Similarly, we write 
\begin{align}
\mathrm{B}(\Gamma,H)|_\Delta = \{u|_{\Delta} \mid u\in \mathrm{B}(\Gamma,H)\};
\end{align}
we note that $\mathrm{B}(\Gamma,H)|_\Delta = \mathrm{span}(\mathrm{B}(\Gamma,H)^+|_\Delta)$ in virtue of the identification (\ref{eqn:Pos-def-fct-cp-maps-Gamma}), and we
set 
\begin{align}
\mathrm{B}(\Gamma)|_\Delta := \mathrm{B}(\Gamma,\mathbb{C})|_\Delta.
\end{align}

We provide a generalisation of \autoref{prop:cpmaps} to positivity domains that will be needed in the sequel. 

\begin{proposition}\label{prop:Duality-Fourier-system-restrictions-psd-functions}
Let $\Gamma$ be a discrete group, $\Delta \subseteq \Gamma$ be a positivity domain and $H$ be a Hilbert space. 
\begin{itemize}
\item[(i)]  If $u\in \mathrm{B}(\Gamma,H)^+|_\Delta$ then there exists a (necessarily) unique completely positive map $\phi_u : \mathrm{C}^*(\Gamma)_{(\Delta)} \to \cl B(H)$ such that $\phi_u(\delta_s) = u(s)$, for all $s\in \Delta$. 
			
\item[(ii)]	If  $\phi : \mathrm{C}^*(\Gamma)_{(\Delta)} \to \cl B(H)$ is a  completely positive map then there exists a (necessarily) unique function $u_{\phi}\in \mathrm{B}(\Gamma,H)^+|_\Delta$ such that $\phi = \phi_{u_{\phi}}$. 
\end{itemize}
\end{proposition}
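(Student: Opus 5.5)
The plan is to reduce both parts to \autoref{prop:cpmaps}, using restriction for one direction and Arveson's extension theorem for the other.

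For (i), let $u \in \mathrm{B}(\Gamma,H)^+|_\Delta$. By definition there is $\tilde u \in \mathrm{B}(\Gamma,H)^+$ with $\tilde u|_\Delta = u$. By \autoref{prop:cpmaps}(i) there is a completely positive map $\phi_{\tilde u} : \mathrm{C}^*(\Gamma) \to \mathcal{B}(H)$ with $\phi_{\tilde u}(\delta_s) = \tilde u(s)$ for all $s\in\Gamma$. Set $\phi_u := \phi_{\tilde u}|_{\mathrm{S}[\Delta]}$. This map is completely positive on $\mathrm{C}^*(\Gamma)_{(\Delta)}$, because the cones of the Fourier system are $\mathrm{M}_n(\mathrm{C}^*(\Gamma))^+ \cap \mathrm{M}_n(\mathrm{S}[\Delta])$. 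It satisfies $\phi_u(\delta_s) = u(s)$ for $s\in\Delta$. Uniqueness holds because $\{\delta_s\}_{s\in\Delta}$ spans $\mathrm{S}[\Delta]$, so a linear map is determined by its values there. In particular $\phi_u$ does not depend on the choice of extension $\tilde u$.

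For (ii), let $\phi : \mathrm{C}^*(\Gamma)_{(\Delta)} \to \mathcal{B}(H)$ be completely positive. Since $\mathrm{C}^*(\Gamma)_{(\Delta)}$ is a unital self-adjoint subspace of the unital $\mathrm{C}^*$-algebra $\mathrm{C}^*(\Gamma)$ (it contains $\delta_e$ because $e\in\Delta$, and is self-adjoint because $\Delta^{-1}=\Delta$), Arveson's extension theorem gives a completely positive $\Phi : \mathrm{C}^*(\Gamma) \to \mathcal{B}(H)$ extending $\phi$. By \autoref{prop:cpmaps}(ii), $u_\Phi(s) := \Phi(\delta_s)$ defines an element of $\mathrm{B}(\Gamma,H)^+$. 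Put $u_\phi := u_\Phi|_\Delta \in \mathrm{B}(\Gamma,H)^+|_\Delta$. Then $\phi_{u_\phi}$ and $\phi$ agree on each $\delta_s$ with $s\in\Delta$, hence on $\mathrm{S}[\Delta]$, so $\phi = \phi_{u_\phi}$. Uniqueness of $u_\phi$ is immediate, since it is forced to be $u_\phi(s) = \phi(\delta_s)$ for $s\in\Delta$.

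The only step needing care is Arveson's extension theorem for an arbitrary Hilbert space $H$ and an operator system $\mathrm{S}[\Delta]$ that may be infinite-dimensional and not closed. The theorem is usually stated for operator systems without any closedness requirement, and the paper already invokes it in this generality, so I would cite it directly. If needed, one can first extend $\phi$ by continuity to the norm closure of $\mathrm{S}[\Delta]$, which is possible since $\|\phi\|_{\mathrm{cb}} = \|\phi(\delta_e)\|$. Everything else is bookkeeping.
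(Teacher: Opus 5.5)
Your proof is correct and follows the paper's argument essentially verbatim. For (i) you restrict $\phi_{\tilde u}$ from \autoref{prop:cpmaps}(i), and for (ii) you apply Arveson's extension theorem, then \autoref{prop:cpmaps}(ii), then restrict to $\Delta$; your remark about Arveson's theorem for a possibly non-closed operator system is a harmless addition that the paper leaves implicit.
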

	
\begin{proof}
(i) 
Let $u \in \mathrm{B}(\Gamma,H)^+|_\Delta$ and let $\tilde{u} \in \mathrm{B}(\Gamma,H)^+$ be such that $\tilde{u}|_\Delta = u$.
By \autoref{prop:cpmaps} (i), there exists a unique completely positive map $\phi_{\tilde{u}} : \mathrm{C}^*(\Gamma) \to \mathcal{B}(H)$ such that $\phi_{\tilde{u}}(\delta_s) = \tilde{u}(s)$, for all $s \in \Gamma$.
Restricting $\phi_{\tilde{u}}$ to the Fourier system $\mathrm{C}^*(\Gamma)_{(\Delta)}$ yields the desired completely positive map $\phi_u$.
If $\tilde{u}_1, \tilde{u}_2 \in \mathrm{B}(\Gamma,H)^+$ are extensions of $u$, then $\phi_{\tilde{u}_1}(\delta_s) = \phi_{\tilde{u}_2}(\delta_s)$, for all $s \in \Delta$, which implies uniqueness of $\phi_u$.

\smallskip
\noindent
(ii) Let $\phi : \mathrm{C}^*(\Gamma)_{(\Delta)} \rightarrow \mathcal{B}(H)$ be a completely positive map.
By Arveson's extension theorem, we may extend $\phi$ to a completely positive map $\tilde{\phi} : \mathrm{C}^*(\Gamma) \to \mathcal{B}(H)$.
Using \autoref{prop:cpmaps} (ii) we obtain a positive semi-definite function $u_{\tilde{\phi}} : \Gamma \rightarrow \mathcal{B}(H)$, such that $u_{\tilde{\phi}}(s) = \tilde{\phi}(\delta_s)$, for all $s \in \Gamma$.
Restricting $u_{\tilde{\phi}}$ to $\Delta$ gives the desired function $u_\phi : \Delta \rightarrow \mathcal{B}(H)$.
If $\tilde{\phi}_1, \tilde{\phi}_2 \in \mathcal{CP}(\mathrm{C}^*(\Gamma),\mathcal{B}(H))$ are extensions of $\phi$, then $u_{\tilde{\phi}_1}(s) = u_{\tilde{\phi}_2}(s)$, for all $s \in \Delta$, which implies uniqueness of $u_\phi$.
\end{proof}
	
Similarly as in (\ref{eqn:Pos-def-fct-cp-maps-Gamma}), \autoref{prop:Duality-Fourier-system-restrictions-psd-functions} gives a canonical identification of the cones
\begin{align}\label{eqn:Pos-def-fct-cp-maps-Restr}
\mathrm{B}(\Gamma,H)^+|_\Delta \cong \mathcal{CP}(\mathrm{C}^*(\Gamma)_{(\Delta)},\mathcal{B}(H)),
\end{align}
implying the following corollary.

\begin{corollary}\label{cor:B-Gamma-Restr-MOVS}
The pair $(\mathrm{B}(\Gamma)|_\Delta, (\mathrm{B}(\Gamma,\mathbb{C}^n)^+|_\Delta)_{n\in\mathbb{N}})$ is a matrix ordered vector space.
\end{corollary}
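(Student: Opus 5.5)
The cleanest route is to transport the structure along the identification (\ref{eqn:Pos-def-fct-cp-maps-Restr}). By \autoref{prop:Duality-Fourier-system-restrictions-psd-functions}, for every $n$ the assignment $u \mapsto \phi_u$ is a bijection
\[
\mathrm{B}(\Gamma,\mathbb{C}^n)^+|_\Delta \to \mathcal{CP}(\mathrm{C}^*(\Gamma)_{(\Delta)},\mathrm{M}_n).
\]
This bijection is the restriction of the linear map
\[
\mathrm{M}_n(\mathrm{B}(\Gamma)|_\Delta) \to \mathrm{M}_n\big((\mathrm{C}^*(\Gamma)_{(\Delta)})^{\mathrm{d}}\big), \qquad u \mapsto \big(\delta_s \mapsto u(s)\big)_{s\in\Delta}.
\]
This map is well defined because $\{\delta_s\}_{s\in\Delta}$ is a linear basis of $\mathrm{S}[\Delta]$: the $\delta_s$ are linearly independent in $\mathrm{C}^*(\Gamma)$, as one sees by composing with the left regular representation. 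It is injective, since a functional on $\mathrm{S}[\Delta]$ is determined by its values on the $\delta_s$. I also need $\mathrm{B}(\Gamma,\mathbb{C}^n)|_\Delta = \mathrm{M}_n(\mathrm{B}(\Gamma)|_\Delta)$ as vector spaces. For this, note that matrix entries of elements of $\mathrm{B}(\Gamma,\mathbb{C}^n)$ lie in $\mathrm{B}(\Gamma)$ by Wittstock decomposition applied through \autoref{prop:cpmaps}, and conversely a matrix of elements of $\mathrm{B}(\Gamma)$ is a combination of $E_{ij}\otimes$ (positive definite) pieces, each of which is spanned by positive ones. The same holds after restricting to $\Delta$.

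Next I verify the axioms via the dual. The map intertwines the involution $u^*(s)=u(s^{-1})^*$ with the adjoint of functionals, using $\delta_s^*=\delta_{s^{-1}}$ and $\Delta^{-1}=\Delta$. The cone $\mathrm{M}_n(\mathrm{B}(\Gamma)|_\Delta)^+ := \mathrm{B}(\Gamma,\mathbb{C}^n)^+|_\Delta$ corresponds to $\mathcal{CP}(\mathrm{C}^*(\Gamma)_{(\Delta)},\mathrm{M}_n) = \mathrm{M}_n((\mathrm{C}^*(\Gamma)_{(\Delta)})^{\mathrm{d}})^+$. Compatibility follows because for $\alpha\in\mathrm{M}_{n,m}$ one has $\phi_{\alpha u\alpha^*}=\alpha\phi_u\alpha^*$, which is cp. Alternatively, this is immediate from the restriction $\mathrm{B}(\Gamma,\mathbb{C}^m)^+\to\mathrm{B}(\Gamma,\mathbb{C}^n)^+$, $\tilde u\mapsto \alpha\tilde u\alpha^*$, preserving extensions. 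Each cone is convex, since sums and positive multiples of positive definite extensions are again positive definite extensions.

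Properness is the remaining point. If $u$ and $-u$ both lie in the cone, then $\phi_u$ and $-\phi_u$ are cp on the operator system $\mathrm{C}^*(\Gamma)_{(\Delta)}$. Hence $\phi_u$ vanishes on positive elements, which span the operator system, so $\phi_u=0$ and therefore $u=0$. Spanning on hermitian parts follows because the hermitian elements of $\mathrm{M}_n(\mathrm{B}(\Gamma)|_\Delta)$ are restrictions of hermitian elements of $\mathrm{B}(\Gamma,\mathbb{C}^n)$, and those are differences of positive ones by the identification (\ref{eqn:Pos-def-fct-cp-maps-Gamma}) together with Wittstock's decomposition. Alternatively, a hermitian $u$ gives a self-adjoint functional $\phi_u$ on the finite-or-not operator system, which by Wittstock (or Jordan) decomposes as a difference of cp maps; these lift to $\mathrm{B}(\Gamma,\mathbb{C}^n)^+|_\Delta$ by part (ii) of the proposition.

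The main obstacle I expect is the bookkeeping identification $\mathrm{B}(\Gamma,\mathbb{C}^n)|_\Delta=\mathrm{M}_n(\mathrm{B}(\Gamma)|_\Delta)$ and the spanning of hermitian parts. I would handle both uniformly by passing to $\mathcal{CB}(\mathrm{C}^*(\Gamma)_{(\Delta)},\mathrm{M}_n)=\mathrm{M}_n$ of the dual, where spanning by cp maps is Wittstock's theorem, and then pulling back via part (ii) of \autoref{prop:Duality-Fourier-system-restrictions-psd-functions}.
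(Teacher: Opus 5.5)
Your proposal is correct and takes essentially the paper's route: the paper reads the corollary off the cone identification $\mathrm{B}(\Gamma,H)^+|_\Delta \cong \mathcal{CP}(\mathrm{C}^*(\Gamma)_{(\Delta)},\mathcal{B}(H))$ from \autoref{prop:Duality-Fourier-system-restrictions-psd-functions}, transporting the dual matrix order of the Fourier system, and you spell out the checks it leaves implicit (well-definedness, the involution, compatibility, properness, spanning). One small simplification: spanning of the hermitian part needs no Wittstock decomposition, since once $\mathrm{M}_n(\mathrm{B}(\Gamma)|_\Delta)=\mathrm{B}(\Gamma,\mathbb{C}^n)|_\Delta$ is the span of the hermitian cone $\mathrm{B}(\Gamma,\mathbb{C}^n)^+|_\Delta$, a hermitian element $\sum_k c_k v_k$ equals $\sum_k \mathrm{Re}(c_k)\, v_k$, which is already a difference of cone elements.
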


We note that, further, the identification 
\begin{align}
\mathrm{B}(\Gamma)|_\Delta 
\cong \mathrm{span}(\mathcal{CP}(\mathrm{C}^*(\Gamma)_{(\Delta)},\mathcal{B}(H))) 
= \mathcal{CB}(\mathrm{C}^*(\Gamma)_{(\Delta)},\mathcal{B}(H))
\end{align}  
turns $\mathrm{B}(\Gamma)|_\Delta$ into a (matrix ordered) operator space.
	
\section{Sums of squares and Toeplitz matrices}\label{sec:SOS-Toep}
    
In this section we examine Toeplitz matrices associated to finite subsets $\Sigma \subseteq \Gamma$ and provide an explicit identification of the dual of the corresponding Toeplitz operator system, leading to an operator system characterisation of a positive extension property.
Throughout, $\Gamma$ will be a discrete group and $\Sigma \subseteq \Gamma$ a finite subset.
We begin by defining an operator system spanned by the unitaries $\delta_{st^{-1}}$ in $\mathrm{C}^*(\Gamma)$, for $s,t\in\Sigma$, but whose positive matrix cones consist of those elements which admit a decomposition into sums of squares, rather than the (possibly different) positive matrix cones inherited from $\mathrm{C}^*(\Gamma)$.
The resulting operator system will be denoted by $\mathrm{SOS}(\Sigma)$ and one may think of it as the operator system structure on the *-vector space generated by $\{\delta_{st^{-1}} \mid s,t\in\Sigma\}$, for which a Fej\'{e}r--Riesz type property is enforced.

\subsection{The sums of squares operator system and the Fej\'{e}r--Riesz property}
	
The following sets will constitute the positive matrix cones for our operator system $\mathrm{SOS}(\Sigma)$.
Recall our notation
\begin{align}
\mathrm{S}[\Sigma] := \mathrm{span}\{\delta_{s} \mid s\in\Sigma\}
\ \ \mbox{ and } \ \
\mathrm{S}[\Sigma\Sigma^{-1}] := \mathrm{span}\{\delta_{st^{-1}} \mid s,t\in\Sigma\},
\end{align}
and the fact that $\mathrm{S}[\Sigma\Sigma^{-1}]$ is a *-vector subspace of $\mathrm{C}^*(\Gamma)$.

\begin{definition}
Let $\Gamma$ be a discrete group and $\Sigma\subseteq \Gamma$ be a finite subset.
For $n \in \mathbb{N}$, set
\begin{align}
\mathcal{Q}_n(\Sigma) 
&:= 
\left\{ \sum_{i=1}^r y_iy_i^* \mid r \in \mathbb{N}, y_i = \sum_{s \in \Sigma} A_s^i \otimes \delta_s, A_s^i \in \mathrm{M}_{n} \right\}.
\end{align}
By definition we have $\mathcal{Q}_n(\Sigma) \subseteq \mathrm{M}_n(\mathrm{S}[\Sigma\Sigma^{-1}])_\mathrm{h}$.
\end{definition}

\begin{lemma}\label{lem:SOS-cones-positive-matrix-version}
Let $\Gamma$ be a discrete group and $\Sigma\subseteq \Gamma$ be a finite subset.
The following identities hold:
\begin{align}
\mathcal{Q}_n(\Sigma)
&= \left\{ yy^* \mid y = \sum_{s \in \Sigma} A_s \otimes \delta_s, A_s \in \mathrm{M}_{n,k}, k \in \mathbb{N} \right\} \label{eqn:SOS-cone-arbitrary-rank} \\
&= \left\{ \sum_{s,t \in \Sigma} A_{s,t} \otimes \delta_{st^{-1}} \mid (A_{s,t})_{s,t \in \Sigma} \in (\mathrm{M}_\Sigma(\mathrm{M}_n))^+ \right\} \label{eqn:SOS-cone-positive-matrix}.
\end{align}
\end{lemma}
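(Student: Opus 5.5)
We prove the three descriptions coincide by showing a cycle of inclusions. Denote the middle set in \eqref{eqn:SOS-cone-arbitrary-rank} by $\mathcal{Q}_n'(\Sigma)$ and the last set in \eqref{eqn:SOS-cone-positive-matrix} by $\mathcal{Q}_n''(\Sigma)$. Everything is driven by one computation: for $y = \sum_{s\in\Sigma} A_s \otimes \delta_s$ with $A_s \in \mathrm{M}_{n,k}$, we have $y^* = \sum_t A_t^* \otimes \delta_{t^{-1}}$, and hence
\begin{align}
yy^* = \sum_{s,t\in\Sigma} A_sA_t^* \otimes \delta_{st^{-1}}.
\end{align}
The coefficient matrix $(A_sA_t^*)_{s,t\in\Sigma} = RR^*$ is positive in $\mathrm{M}_\Sigma(\mathrm{M}_n)$, where $R$ is the column $(A_s)_{s\in\Sigma} \in \mathrm{M}_{\Sigma n, k}$. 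This gives $\mathcal{Q}_n'(\Sigma) \subseteq \mathcal{Q}_n''(\Sigma)$.

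For $\mathcal{Q}_n''(\Sigma) \subseteq \mathcal{Q}_n'(\Sigma)$, take $(A_{s,t})_{s,t} \geq 0$ in $\mathrm{M}_\Sigma(\mathrm{M}_n) \cong \mathrm{M}_{|\Sigma| n}$. Factor it as $RR^*$ with $R \in \mathrm{M}_{|\Sigma| n, k}$, for instance $R$ equal to the positive square root, so $k = |\Sigma| n$. Write $R$ as a column of blocks $A_s \in \mathrm{M}_{n,k}$. Then $A_{s,t} = A_sA_t^*$, and the displayed identity shows $\sum A_{s,t}\otimes\delta_{st^{-1}} = yy^*$.

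For $\mathcal{Q}_n(\Sigma) \subseteq \mathcal{Q}_n''(\Sigma)$, note that each $y_iy_i^*$ has a positive coefficient matrix by the computation above with $k=n$. Sums of positive matrices are positive, and the map $(A_{s,t}) \mapsto \sum A_{s,t}\otimes\delta_{st^{-1}}$ is linear, so the sum lies in $\mathcal{Q}_n''(\Sigma)$.

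For $\mathcal{Q}_n'(\Sigma) \subseteq \mathcal{Q}_n(\Sigma)$, take $y = \sum_s A_s\otimes \delta_s$ with $A_s \in \mathrm{M}_{n,k}$. Split the $k$ columns into $r = \lceil k/n\rceil$ groups of at most $n$ columns, padding with zero columns. This writes each $A_s = [A_s^1,\dots,A_s^r]$ with $A_s^i \in \mathrm{M}_n$. Since $A_sA_t^* = \sum_i A_s^i (A_t^i)^*$, setting $y_i := \sum_s A_s^i\otimes\delta_s$ gives $yy^* = \sum_i y_iy_i^*$.

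A subtle point is that the map from coefficient matrices to $\mathrm{M}_n(\mathrm{S}[\Sigma\Sigma^{-1}])$ need not be injective, since distinct pairs $(s,t)$ may give the same $st^{-1}$. This causes no difficulty, because each inclusion only exhibits a representative. The column-splitting step reducing arbitrary $k$ to square coefficients is the only mildly delicate part, and it is routine.
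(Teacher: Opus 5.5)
Your proof is correct and uses the paper's argument. It rests on the same identity $yy^*=\sum_{s,t}A_sA_t^*\otimes\delta_{st^{-1}}$, a factorisation of the positive coefficient matrix as $RR^*$, and splitting or concatenating column blocks to pass between $\mathrm{M}_{n,k}$-coefficients and sums of squares with $\mathrm{M}_n$-coefficients. The paper closes a single cycle of three inclusions (taking a square factor $B\in\mathrm{M}_\Sigma(\mathrm{M}_n)$ to land directly in $\mathcal{Q}_n(\Sigma)$), whereas you prove four inclusions, which is slightly redundant but logically complete.
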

	
 \begin{proof}
Let $y_i = \sum_{s \in \Sigma} A_s^i \otimes \delta_s \in \mathrm{M}_n \otimes \mathrm{S}[\Sigma]$, $i = 1,\dots,r$;
then $\sum_{i=1}^r y_iy_i^* = yy^*$, where
\begin{align}
y = (y_1,\dots,y_r) \in \mathrm{M}_{1,r}(\mathrm{M}_n \otimes \mathrm{S}[\Sigma]) \cong \mathrm{M}_{n,nr} \otimes \mathrm{S}[\Sigma],
\end{align}
showing that $\cl{Q}_n(\Sigma)$ is contained in the right hand side of (\ref{eqn:SOS-cone-arbitrary-rank}). 
		
To show the inclusion of the right-hand side of (\ref{eqn:SOS-cone-arbitrary-rank}) in the set displayed in (\ref{eqn:SOS-cone-positive-matrix}), let $yy^* \in \mathcal{Q}_n(\Sigma)$ with $y = \sum_{s \in \Sigma} A_s \otimes \delta_s$, for $A_s \in \mathrm{M}_{n,k}$ with $k \in \mathbb{N}$, as in (\ref{eqn:SOS-cone-arbitrary-rank}).
Then 
\begin{align} 
yy^* = \sum_{s,t \in \Sigma} A_sA_t^* \otimes \delta_{st^{-1}}
\ \ \mbox{ and } \ \
(A_sA_t^*)_{s,t \in \Sigma} \in (\mathrm{M}_\Sigma(\mathrm{M}_n))^+,
\end{align}
as required.
		
To show that the set displayed in (\ref{eqn:SOS-cone-positive-matrix}) is contained in $\mathcal{Q}_n(\Sigma)$, consider an element of the form $x = \sum_{s,t\in\Sigma} A_{s,t} \otimes \delta_{st^{-1}}$ with $A := (A_{s,t})_{s,t\in\Sigma} \in (\mathrm{M}_\Sigma(\mathrm{M}_n))^+$. 
Then there is a matrix $B = (B_{s,t})_{s,t\in\Sigma} \in \mathrm{M}_\Sigma(\mathrm{M}_n)$ such that 
\begin{align} 
A = BB^* = \left(\sum_{r\in\Sigma} B_{s,r} (B^*)_{r,t}\right)_{s,t\in\Sigma}.
\end{align}
Note that $(B^*)_{r,t} = B_{t,r}^* \in \mathrm{M}_n$, for all $r,t \in \Sigma$.
It follows that
\begin{align}\label{eqn:Decomposition-Elements-E-cone}
\begin{split}
x
&= \sum_{s,t \in \Sigma} A_{s,t} \otimes \delta_{st^{-1}}
= \sum_{s,t \in \Sigma} \sum_{r\in\Sigma} B_{s,r} (B^*)_{r,t} \otimes \delta_{st^{-1}} \\
&= \sum_{r\in\Sigma} \left(\sum_{s\in\Sigma} B_{s,r} \otimes \delta_{s}\right) \left(\sum_{t\in\Sigma} B_{t,r} \otimes \delta_t\right)^*
\in \mathcal{Q}_n(\Sigma).
\end{split}
\end{align}
This completes the proof.
\end{proof}

\begin{lemma}\label{lem:SOS-cones-MOU}
Let $\Gamma$ be a discrete group and $\Sigma\subseteq \Gamma$ be a finite subset. 
The following statements hold:
\begin{itemize}
\item[(i)] $\mathcal{Q}_n(\Sigma) \subseteq \mathrm{M}_n(\mathrm{C}^*(\Gamma))^+$, $n\in \bb{N}$.

\item[(ii)] $(\mathcal{Q}_n(\Sigma))_{n \in \mathbb{N}}$ is a compatible family of proper convex cones on the *-vector space $\mathrm{S}[\Sigma\Sigma^{-1}]$.

\item[(iii)] If $(\mathcal{C}_n)_{n \in \mathbb{N}}$ is a compatible family of convex cones on the *-vector space $\mathrm{S}[\Sigma\Sigma^{-1}]$ such that 
\begin{equation}\label{eq_dij}
\left\{(\delta_{s_is_j^{-1}})_{i,j=1}^n \mid s_1,\dots,s_n \in \Sigma\right\}\subseteq \cl C_n, \ \ \ n\in \bb{N},
\end{equation}
then $\mathcal{Q}_n(\Sigma)\subseteq \cl C_n$, $n\in \bb{N}$.

\item[(iv)] The element $\delta_e$ is a matrix order unit for the family $(\mathcal{Q}_n(\Sigma))_{n \in \mathbb{N}}$.
\end{itemize}
In particular, the triple $(\mathrm{S}[\Sigma\Sigma^{-1}], (\mathcal{Q}_n(\Sigma))_{n\in\mathbb{N}}, \delta_e)$ is a matrix order unit space and its canonical inclusion into $\mathrm{C}^*(\Gamma)$ is ucp.
\end{lemma}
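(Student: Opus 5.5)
The plan is to check (i)--(iv) in turn, using the two descriptions of $\mathcal{Q}_n(\Sigma)$ from \autoref{lem:SOS-cones-positive-matrix-version}.

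For (i), an element of $\mathcal{Q}_n(\Sigma)$ has the form $yy^*$ with $y\in \mathrm{M}_{n,k}(\mathrm{C}^*(\Gamma))$, by (\ref{eqn:SOS-cone-arbitrary-rank}). Since $\mathrm{M}_n(\mathrm{C}^*(\Gamma))$ is a C$^*$-algebra (embed $y$ in a square matrix by padding with zeros), such an element is positive.

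For (ii), convexity and closure under positive scalars are clear from the sum-of-squares form. Hermitian-ness is immediate, and $\mathcal{Q}_n(\Sigma)\subseteq\mathrm{M}_n(\mathrm{S}[\Sigma\Sigma^{-1}])$ holds by definition. For compatibility, take $\alpha\in\mathrm{M}_{m,n}$ and $y=\sum_s A_s\otimes\delta_s$. Then $\alpha yy^*\alpha^*=(\alpha y)(\alpha y)^*$ with $\alpha y=\sum_s \alpha A_s\otimes\delta_s$, and $\alpha A_s\in\mathrm{M}_{m,k}$, so the result lies in $\mathcal{Q}_m(\Sigma)$ by (\ref{eqn:SOS-cone-arbitrary-rank}). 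Properness follows from (i): if $x\in\mathcal{Q}_n\cap(-\mathcal{Q}_n)$, then $x$ and $-x$ are both positive in the C$^*$-algebra $\mathrm{M}_n(\mathrm{C}^*(\Gamma))$, so $x=0$. Here I use that $\mathrm{S}[\Sigma\Sigma^{-1}]$ is a genuine subspace of $\mathrm{C}^*(\Gamma)$, which holds because the $\delta_s$ are linearly independent.

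For (iii), I use (\ref{eqn:SOS-cone-positive-matrix}) to reduce to rank-one data. Write $y=\sum_{s\in\Sigma}A_s\otimes\delta_s$ with $A_s\in\mathrm{M}_{n,k}$, and enumerate $\Sigma=\{s_1,\dots,s_N\}$. Then
\begin{align}
yy^*=\sum_{i,j}A_{s_i}A_{s_j}^*\otimes\delta_{s_is_j^{-1}}=\alpha\,\big((\delta_{s_is_j^{-1}})_{i,j}\otimes \mathbf{1}_k\big)\,\alpha^*,
\end{align}
where $\alpha=[A_{s_1},\dots,A_{s_N}]\in\mathrm{M}_{n,Nk}$. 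The matrix $(\delta_{s_is_j^{-1}})_{i,j}\otimes\mathbf{1}_k$ is, after a permutation, the direct sum of $k$ copies of $(\delta_{s_is_j^{-1}})_{i,j}$. Equivalently, it is the matrix indexed by the list $s_1,\dots,s_1,\dots,s_N,\dots,s_N$ (each element repeated $k$ times), which is itself of the form in (\ref{eq_dij}). This list has repetitions, but (\ref{eq_dij}) allows arbitrary $s_1,\dots,s_{Nk}\in\Sigma$. Hence this matrix lies in $\mathcal{C}_{Nk}$, and compatibility gives $yy^*\in\mathcal{C}_n$. Convexity of $\mathcal{C}_n$ then covers sums.

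For (iv), the main obstacle, I must show that for every hermitian $x\in\mathrm{M}_n(\mathrm{S}[\Sigma\Sigma^{-1}])$ there is $r>0$ with $r(\delta_e)_n+x\in\mathcal{Q}_n(\Sigma)$. By (\ref{eqn:SOS-cone-positive-matrix}) it suffices to represent $r\mathbf{1}_n\otimes\delta_e+x$ as $\sum_{s,t}A_{s,t}\otimes\delta_{st^{-1}}$ with $(A_{s,t})$ positive in $\mathrm{M}_\Sigma(\mathrm{M}_n)$.
\begin{itemize}
\item First choose a hermitian representing matrix. Write $x=\sum_{g\in\Sigma\Sigma^{-1}}X_g\otimes\delta_g$ with $X_g^*=X_{g^{-1}}$. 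For each $g\neq e$ in $\Sigma\Sigma^{-1}$, fix one pair $(s,t)$ with $st^{-1}=g$. Choose these consistently, so that the pair for $g^{-1}$ is $(t,s)$; this is possible since $g\neq g^{-1}$ pairs are distinct and the case $g=g^{-1}\neq e$ needs $(s,t)$ and $(t,s)$ both chosen, which is fine by splitting $X_g$ in halves between them. Put $X_g$ in that entry, and put $X_e/|\Sigma|$ on the diagonal. This gives a hermitian $T\in\mathrm{M}_\Sigma(\mathrm{M}_n)$ mapping to $x$.
\item Next, since $\mathbf{1}_\Sigma\otimes\mathbf{1}_n$ maps to $|\Sigma|\,\mathbf{1}_n\otimes\delta_e$, I take $r=|\Sigma|\,\|T\|$. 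Then $T+\|T\|\mathbf{1}\ge0$ maps to $x+r(\delta_e)_n$, which therefore lies in $\mathcal{Q}_n(\Sigma)$.
\end{itemize}

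Finally, (i)--(iv) together show that the triple is a matrix order unit space. Its inclusion into $\mathrm{C}^*(\Gamma)$ is ucp by (i), since it is unital and maps each $\mathcal{Q}_n(\Sigma)$ into $\mathrm{M}_n(\mathrm{C}^*(\Gamma))^+$.
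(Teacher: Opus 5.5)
\textbf{Error in (iii).} Parts (i) and (ii), and the compression idea in (iii), essentially match the paper. However, one step in (iii) is false as stated.

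The matrix indexed by the list $s_1,\dots,s_1,\dots,s_N,\dots,s_N$ (each point repeated $k$ times) is $(\delta_{s_is_j^{-1}})_{i,j}\otimes J_k$, where $J_k$ is the $k\times k$ all-ones matrix. It is not $(\delta_{s_is_j^{-1}})_{i,j}\otimes\mathbf{1}_k$. For $k\geq 2$, the entry between two copies of the same $s_i$ is $\delta_e$ rather than $0$, so the two matrices are not even permutation-conjugate. Compressing the repeated-list matrix by your $\alpha$ gives
$\sum_{i,j}(A_{s_i}\xi)(A_{s_j}\xi)^*\otimes\delta_{s_is_j^{-1}}$, where $\xi\in\mathbb{C}^k$ is the all-ones vector. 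That is the square of $\sum_s A_s\xi\otimes\delta_s$, not $yy^*$. So invoking (\ref{eq_dij}) with repeated points does not do the job.

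The repair is to use your first observation directly. A compatible family of convex cones is invariant under conjugation by permutation matrices. It is also closed under direct sums, since $c\oplus c'=\beta_1c\beta_1^*+\beta_2c'\beta_2^*$ with $\beta_1,\beta_2$ the coordinate isometries. Hence $(\delta_{s_is_j^{-1}})_{i,j}\otimes\mathbf{1}_k\in\mathcal{C}_{Nk}$, and compatibility with $\alpha$ finishes the argument.

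Alternatively, split $yy^*=\sum_{p=1}^k y_py_p^*$ with $y_p=\sum_sA_s\xi_p\otimes\delta_s$, where $\xi_p$ runs over the standard basis of $\mathbb{C}^k$. Then compress the $N\times N$ matrix $(\delta_{s_is_j^{-1}})_{i,j}$ by $[A_{s_1}\xi_p,\dots,A_{s_N}\xi_p]\in\mathrm{M}_{n,N}$. The paper's proof of (iii) uses the same direct-sum closure implicitly, when it places $A\otimes(\delta_{s_is_j^{-1}})_{i,j}$ in $\mathcal{C}_{k^2n}$.

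\textbf{Part (iv): correct, different route.} The paper handles each hermitian piece $A\otimes\delta_g+A^*\otimes\delta_{g^{-1}}$ separately. Using the polar decomposition $A=V|A|$, it writes $2\lVert A\rVert(\delta_e)_n$ minus this piece as an explicit two-term square plus $(2\lVert A\rVert\mathbf{1}_n-V|A|V^*-|A|)\otimes\delta_e$. Summing gives $r=2\sum_g\lVert A_g\rVert$.

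You instead use that $\mathcal{Q}_n(\Sigma)$ is the image of $\mathrm{M}_\Sigma(\mathrm{M}_n)^+$ under the surjective $*$-linear map $\Phi_n(T)=\sum_{s,t}T_{s,t}\otimes\delta_{st^{-1}}$. This map sends $\mathbf{1}$ to $|\Sigma|(\delta_e)_n$, so the order unit of $\mathrm{M}_\Sigma(\mathrm{M}_n)$ is simply pushed forward.

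Your route is shorter and more structural. Up to the trace pairing, $\Phi$ is the predual of the inclusion $\mathrm{C}^*(\Gamma)^{(\Sigma)}\subseteq\mathrm{M}_\Sigma$, which is the duality of \autoref{cor:Toeplitz-SOS-dual-op-sys}. The paper's route instead works one square at a time and gives a bound in terms of the coefficients of $x$.

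Your careful consistent choice of matrix entries is unnecessary. Since $\Phi_n(T^*)=\Phi_n(T)^*$, any preimage $T$ of a hermitian $x$ can be replaced by $(T+T^*)/2$.
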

	
\begin{proof}
Item (i) follows from the fact that sums of squares are positive, while
item (ii) follows from (\ref{eqn:SOS-cone-positive-matrix}).

\smallskip
\noindent
(iii) We first note that 
\begin{align} 
\left(\delta_{s_i s_j^{-1}}\right)_{i,j=1}^n 
= 
\left(\sum_{i=1}^n E_i \otimes \delta_{s_i}\right)\left(\sum_{i=1}^n E_i \otimes \delta_{s_i}\right)^* \in \mathcal{Q}_n(\Sigma),
\end{align}
where the $E_i$ are the matrix units in $\mathrm{M}_{n,1}$ and $s_1,\dots,s_n \in \Sigma$; thus the matrix $(\delta_{s_is_j^{-1}})_{i,j=1}^n$ is an element of $\mathcal{Q}_n(\Sigma)$.
Next, suppose that $(\mathcal{C}_n)_{n \in \mathbb{N}}$ is a compatible family of convex proper cones satisfying (\ref{eq_dij}) and let $A := (A_{i,j})_{i,j}\in {\rm M}_k(\mathrm{M}_n)^+$. 
We have that $A \otimes (\delta_{s_is_j^{-1}})_{i,j=1}^k \in \mathcal{C}_{k^2n}$; again by compatibility, 
\begin{align}
\sum_{i,j=1}^k A_{i,j} \otimes \delta_{s_is_j^{-1}}\in 
\mathcal{C}_n,    
\end{align}
implying, by Lemma \ref{lem:SOS-cones-positive-matrix-version}, that $\mathcal{C}_n \supseteq \mathcal{Q}_n(\Sigma)$. 

\smallskip
\noindent
(iv) 
Let $s\in \Sigma$ and $A\in \mathrm{M}_n$, and set 
\begin{align}\label{eqn:x(s,A)}
x := x(s,A) := A\otimes \delta_s + A^* \otimes \delta_{s^{-1}};
\end{align}
clearly, $x \in \mathrm{M}_n(\mathrm{S}[\Sigma\Sigma^{-1}])_\mathrm{h}$. 
Write $A = V|A|$ in its polar decomposition and set $A_1 := V|A|^{\frac{1}{2}}$, $A_2 := |A|^{\frac{1}{2}}$. 
Then
\begin{align}
&2\lVert A\rVert \cdot (\mathbf{1}_n \otimes \delta_e) - x \\
&\geq (V|A|V^*+|A|) \otimes \delta_e - (V|A|^{\frac{1}{2}}|A|^{\frac{1}{2}} \otimes \delta_s + |A|^{\frac{1}{2}}|A|^{\frac{1}{2}}V^* \otimes \delta_{s^{-1}}) \\
&= (A_1A_1^* + A_2A_2^*) \otimes \delta_e - A_1A_2^* \otimes \delta_s - A_2A_1^* \otimes \delta_{s^{-1}}  \\
&= \left(A_1 \otimes \delta_s - A_2 \otimes \delta_e\right) \left(A_1 \otimes \delta_s - A_2 \otimes \delta_e\right)^* 
\in  \mathcal{Q}_n(\Sigma).
\end{align}
Note that a general hermitian element $y \in \mathrm{M}_n(\mathrm{S}[\Sigma\Sigma^{-1}])_\mathrm{h}$ is of the form $y = \sum_{s \in \Sigma} x(s,A_s)$, with $x(s,A_s)$ as in (\ref{eqn:x(s,A)}).
Setting $r := 2\sum_{s \in \Sigma} \lVert A_s \rVert$, we thus obtain $r \cdot (\mathbf{1}_n \otimes \delta_e) - y \in \mathcal{Q}_n(\Sigma)$.
This shows that $(\delta_e)_n$ is an order unit for the ordered vector space $(\mathrm{M}_n(\mathrm{S}[\Sigma\Sigma^{-1}]),\mathcal{Q}_n(\Sigma))$.
\end{proof}
	
The assumption that $\Sigma$ is finite allows us to show that $\delta_e$ is in fact an Archimedean matrix order unit for the matrix ordered vector space $(\mathrm{S}[\Sigma\Sigma^{-1}],(\mathcal{Q}_n(\Sigma))_{n\in\mathbb{N}})$; 
we show this in the next proposition, whose proof is a noncommutative and matrix version of the argument in the proof of \cite[Lemma 1.3]{Rud63}.
	
\begin{proposition}\label{prop:Finite-SOS-archimedean}
The matrix order unit space $(\mathrm{S}[\Sigma\Sigma^{-1}], (\mathcal{Q}_n(\Sigma))_{n\in\mathbb{N}}, \delta_e)$ is an operator system.
\end{proposition}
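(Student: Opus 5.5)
The plan is to prove the Archimedean property directly at each matrix level, using the description (\ref{eqn:SOS-cone-positive-matrix}) of $\mathcal{Q}_n(\Sigma)$ from \autoref{lem:SOS-cones-positive-matrix-version} as the image of a closed cone under a linear map between finite-dimensional spaces. By \autoref{lem:SOS-cones-MOU} we already have a matrix order unit space. So it remains to show the following for each $n$: if $x \in \mathrm{M}_n(\mathrm{S}[\Sigma\Sigma^{-1}])_\mathrm{h}$ satisfies $r(\mathbf{1}_n\otimes\delta_e)+x \in \mathcal{Q}_n(\Sigma)$ for all $r>0$, then $x \in \mathcal{Q}_n(\Sigma)$.

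\emph{Reduction to closedness.} Since the $\delta_s$ are linearly independent in $\mathrm{C}^*(\Gamma)$ and $\Sigma$ is finite, $\mathrm{M}_n(\mathrm{S}[\Sigma\Sigma^{-1}])$ is a finite-dimensional vector space with a canonical Euclidean topology. In that topology $x = \lim_{r\to 0^+}\bigl(r(\mathbf{1}_n\otimes\delta_e)+x\bigr)$. Hence it suffices to show that $\mathcal{Q}_n(\Sigma)$ is closed in the Euclidean topology of $\mathrm{M}_n(\mathrm{S}[\Sigma\Sigma^{-1}])$.

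\emph{Closedness via a compact base.} Consider the linear map
\begin{align}
\Phi_n : \mathrm{M}_\Sigma(\mathrm{M}_n) \to \mathrm{M}_n(\mathrm{S}[\Sigma\Sigma^{-1}]), \quad (A_{s,t})_{s,t\in\Sigma} \mapsto \sum_{s,t\in\Sigma} A_{s,t}\otimes \delta_{st^{-1}},
\end{align}
so that $\mathcal{Q}_n(\Sigma) = \Phi_n(\mathrm{M}_\Sigma(\mathrm{M}_n)^+)$ by (\ref{eqn:SOS-cone-positive-matrix}).

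The key observation is that $\ker\Phi_n \cap \mathrm{M}_\Sigma(\mathrm{M}_n)^+ = \{0\}$. Indeed, $st^{-1}=e$ holds exactly when $s=t$, so the $\delta_e$-coefficient of $\Phi_n(A)$ is $\sum_{s\in\Sigma}A_{s,s}$. If $A\geq 0$ and $\Phi_n(A)=0$, this is a vanishing sum of positive matrices, so every $A_{s,s}=0$. For a positive block matrix, vanishing diagonal blocks force $A=0$.

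Now let $K := \{A \in \mathrm{M}_\Sigma(\mathrm{M}_n)^+ \mid \mathrm{Tr}(A)=1\}$, a compact base of the positive cone. Then $\Phi_n(K)$ is compact and, by the observation above, does not contain $0$. The cone $\mathbb{R}_{\geq 0}\cdot\Phi_n(K) = \mathcal{Q}_n(\Sigma)$ is therefore closed. To see this, take $\lambda_k\Phi_n(A_k)\to y$ with $\lambda_k \geq 0$ and $A_k \in K$. Since $\Phi_n(K)$ stays a positive distance away from $0$, the $\lambda_k$ are bounded. Passing to convergent subnets $\lambda_k\to\lambda$ and $A_k\to A\in K$ gives $y=\lambda\Phi_n(A)\in\mathcal{Q}_n(\Sigma)$.

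The only step needing care is the kernel–cone intersection, and it is handled by reading off the $\delta_e$-coefficient. This is exactly the noncommutative, matrix-valued analogue of Rudin's argument, and it is where finiteness of $\Sigma$ enters, through finite-dimensionality and compactness of $K$. Combining closedness with the reduction above gives $x\in\mathcal{Q}_n(\Sigma)$. Hence $(\delta_e)_n$ is Archimedean for every $n$, and the triple is an operator system.
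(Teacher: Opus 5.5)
Your proof is correct, but it reaches closedness of $\mathcal{Q}_n(\Sigma)$ by a different route than the paper.

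The paper follows Rudin's argument closely. A Carath\'eodory-type reduction shows that every element of $\mathcal{Q}_n(\Sigma)$ is a sum of at most $n^2d$ squares, where $d=|\Sigma\Sigma^{-1}|$. The C$^*$-norm estimate $\|y_i^k\|^2\leq\|x^k\|$ in $\mathrm{M}_n(\mathrm{C}^*(\Gamma))$ then bounds the individual squares along a convergent sequence, and compactness in finite dimensions gives norm-closedness. Finally, the paper transfers norm-closedness to closedness in the order topology and quotes the Paulsen--Tomforde criterion to obtain the Archimedean property.

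You instead work with the Gram-matrix description from \autoref{lem:SOS-cones-positive-matrix-version}, namely $\mathcal{Q}_n(\Sigma)=\Phi_n(\mathrm{M}_\Sigma(\mathrm{M}_n)^+)$. Your key observation is that taking the $\delta_e$-coefficient and then the trace gives a linear functional sending $\Phi_n(A)$ to $\mathrm{Tr}(A)$. Hence $\ker\Phi_n$ meets the positive cone only in $0$, and the compact base $K$ is mapped to a compact set avoiding $0$.

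Your argument is more intrinsic. It only uses linear independence of the $\delta_\gamma$, with no C$^*$-norm inequality and no bound on the number of squares. Your last step, $r(\mathbf{1}_n\otimes\delta_e)+x\to x$ in the Euclidean topology, checks the Archimedean condition directly. This avoids the paper's detour through the order topology, which implicitly needs the order seminorm to be a norm. That holds here, since the order seminorm dominates the C$^*$-norm, but the paper does not say so.

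The paper's route, in exchange, stays within the classical Fej\'er--Riesz/Rudin framework. As a by-product it records an explicit bound on how many squares are needed.
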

	
\begin{proof}
We show that the cones $\mathcal{Q}_n(\Sigma)$ are norm-closed in $\mathrm{M}_n(\mathrm{C}^*(\Gamma))$.
Let $d = |\Sigma\Sigma^{-1}|$.
We claim that every $x \in \mathcal{Q}_n(\Sigma)$ is a sum of at most $n^2d$ squares in $\mathrm{M}_n \otimes \mathrm{S}[\Sigma\Sigma^{-1}]$.
To see this, let $x = \sum_{i=1}^r y_iy_i^* \in \mathcal{Q}_n(\Sigma)$ and assume that $r > n^2d$.
Since 
\begin{align} 
\mathcal{Q}_n(\Sigma) - \mathcal{Q}_n(\Sigma) 
\subseteq \mathrm{M}_n(\mathrm{S}[\Sigma^{-1}\Sigma])_\mathrm{h},
\end{align}
we have
\begin{align} 
\dim_\mathbb{R}(\mathcal{Q}_n(\Sigma) - \mathcal{Q}_n(\Sigma)) 
\leq \mathrm{dim}_\mathbb{R}(\mathrm{M}_n(\mathrm{S}[\Sigma^{-1}\Sigma]))_\mathrm{h} 
= n^2d; 
\end{align}
thus there are $\alpha_1, \dots, \alpha_r \in \mathbb{R}\setminus\{0\}$ such that $\sum_{i=1}^r \alpha_i y_iy_i^* = 0$.
We may assume that $\alpha_i \leq \alpha_{i+1}$, for all $i = 1,\dots,r-1$.
It follows that 
\begin{align} 
x = \sum_{i=1}^{r-1} (1-\frac{\alpha_i}{\alpha_r}) y_iy_i^*,
\end{align}
which completes the proof of the claim.
		
Now, let $x^k = \sum_{i=1}^{n^2d} y_i^k (y_i^k)^*  \in \mathcal{Q}_n(\Sigma)$, $k\in \bb{N}$, with 
\begin{align} 
\lim_{k\to \infty} x^k = x \in \mathrm{M}_n \otimes \mathrm{S}[\Sigma\Sigma^{-1}].
\end{align}
For every $i=1,\dots,n^2d$, we have that 
\begin{align} 
\|y_i^k\|^2 \leq \|x^k\| \leq \sup_{k\in \bb{N}} \|x^k\| < \infty.
\end{align}
Hence the sequences $(y_i^k)_k$ are bounded and, since $\dim(\mathrm{M}_n \otimes \mathrm{S}[\Sigma^{-1}\Sigma]) < \infty$, we may choose convergent subsequences $(y_i^{k_j})_j$ and set $\lim_j y_i^{k_j} =: y_i$.
It follows that 
\begin{align}
x = \lim_{k\to \infty} x^k = \lim_{k\to \infty} \sum_{i=1}^{n^2d} y_i^k (y_i^k)^* = \lim_{j\to \infty} \sum_{i=1}^{n^2d} y_i^{k_j} (y_i^{k_j})^* = \sum_{i=1}^{n^2d} y_i y_i^*.
\end{align}
This shows that $x \in \mathcal{Q}_n(\Sigma)$, so $\mathcal{Q}_n(\Sigma)$ is norm closed in $\mathrm{M}_n(\mathrm{C}^*(\Gamma))$.		
		
To see that $\delta_e$ is an Archimedean matrix order unit, recall from \cite{PT09} that the order topology is a locally convex topology, whence equivalent to the norm topology, since $\mathrm{S}[\Sigma\Sigma^{-1}]$ is finite dimensional. 
Thus the cones $\mathcal{Q}_n(\Sigma)$ are closed in the order topology, and, by \cite[Theorem 2.30, Remark 3.4]{PT09}, this implies that $(\delta_e)_n$ is an Archimedean order unit for $\mathrm{M}_n(\mathrm{S}[\Sigma\Sigma^{-1})$.
\end{proof}

\begin{remark}\label{rem:maxKPTT}
\rm \autoref{lem:SOS-cones-MOU} and \autoref{prop:Finite-SOS-archimedean} show that $V := (\mathrm{S}[\Sigma\Sigma^{-1}], \mathcal{Q}_1(\Sigma), \delta_e)$ is an Archimedean order unit space.
By the minimality of the compatible family of cones $(\mathcal{Q}_n(\Sigma))_{n\in\mathbb{N}}$ (\autoref{lem:SOS-cones-MOU}), $\mathrm{SOS}(\Sigma)$ is the maximal operator system structure $\mathrm{OMAX}(V)$ on $V$ in the sense of \cite{PTT11}.
\end{remark}

\begin{definition}
We call the operator system $(\mathrm{S}[\Sigma\Sigma^{-1}], (\mathcal{Q}_n(\Sigma))_{n\in\mathbb{N}}, \delta_e)$ the \emph{sums-of-squares system} and denote it by $\mathrm{SOS}(\Sigma)$.
\end{definition}

Given two discrete groups $\Gamma_1$, $\Gamma_2$ and finite subsets $\Sigma_1 \subseteq \Gamma_1$, $\Sigma_2 \subseteq \Gamma_2$, we make the canonical identification
\begin{align}\label{eq_S1S2}
\begin{split}
\mathrm{S}[\Sigma_1\Sigma_1^{-1} \times \Sigma_2\Sigma_2^{-1}] &\cong \mathrm{S}[\Sigma_1\Sigma_1^{-1}] \otimes \mathrm{S}[\Sigma_2\Sigma_2^{-1}] \\
\delta_{(s_1t_1^{-1},s_2t_2^{-1})} &\leftrightarrow \delta_{s_1t_1^{-1}} \otimes \delta_{s_2t_2^{-1}},
\end{split}
\end{align}
as finite dimensional *-vector spaces.

\begin{proposition}\label{prop:SOS-max-tensor-product}
Let $\Gamma_1$, $\Gamma_2$ be discrete groups and $\Sigma_1 \subseteq \Gamma_1$, $\Sigma_2 \subseteq \Gamma_2$ be finite subsets.
Then $\mathrm{SOS}(\Sigma_1 \times \Sigma_2) \cong \mathrm{SOS}(\Sigma_1) \otimes_\mathrm{max} \mathrm{SOS}(\Sigma_2)$. 
\end{proposition}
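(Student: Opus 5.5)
Under the identification (\ref{eq_S1S2}) the two operator systems have the same underlying $^*$-vector space and the same unit $\delta_{(e,e)} \leftrightarrow \delta_e\otimes\delta_e$. It therefore suffices to show that the cones coincide at every matrix level:
\begin{align}
\mathcal{Q}_n(\Sigma_1\times\Sigma_2) = \mathrm{M}_n(\mathrm{SOS}(\Sigma_1)\otimes_{\max}\mathrm{SOS}(\Sigma_2))^+, \ \ \ n\in\mathbb{N}.
\end{align}
We prove both inclusions. The key tools are the minimality statement in \autoref{lem:SOS-cones-MOU}(iii), the description (\ref{eqn:SOS-cone-positive-matrix}) from \autoref{lem:SOS-cones-positive-matrix-version}, and the closedness of the cones (\autoref{prop:Finite-SOS-archimedean}).

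For the inclusion ``$\subseteq$'', note that the max cones $\mathrm{M}_n(\mathrm{SOS}(\Sigma_1)\otimes_{\max}\mathrm{SOS}(\Sigma_2))^+$ form a compatible family of convex cones on $\mathrm{S}[(\Sigma_1\times\Sigma_2)(\Sigma_1\times\Sigma_2)^{-1}]$. By \autoref{lem:SOS-cones-MOU}(iii), it is enough to check (\ref{eq_dij}). Take points $(s_i,t_i)\in\Sigma_1\times\Sigma_2$, $i=1,\dots,n$. The matrix $(\delta_{s_is_j^{-1}})_{i,j}$ lies in $\mathcal{Q}_n(\Sigma_1)$, and $(\delta_{t_kt_l^{-1}})_{k,l}$ lies in $\mathcal{Q}_n(\Sigma_2)$. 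Hence their Kronecker tensor lies in $\mathcal{D}^{\max}_{n^2}$ by (T2). Compressing by the isometry $\mathbb{C}^n\to\mathbb{C}^n\otimes\mathbb{C}^n$, $e_i\mapsto e_i\otimes e_i$, gives the matrix $(\delta_{s_is_j^{-1}}\otimes\delta_{t_it_j^{-1}})_{i,j}$. This is exactly $(\delta_{(s_i,t_i)(s_j,t_j)^{-1}})_{i,j}$, so (\ref{eq_dij}) holds.

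For the inclusion ``$\supseteq$'', we first treat the generating cone $\mathcal{D}^{\max}_n$. Take $x=\sum_{s,s'}A_{s,s'}\otimes\delta_{ss'^{-1}}\in\mathcal{Q}_k(\Sigma_1)$ and $y=\sum_{t,t'}B_{t,t'}\otimes\delta_{tt'^{-1}}\in\mathcal{Q}_\ell(\Sigma_2)$, with $(A_{s,s'})\geq0$ and $(B_{t,t'})\geq0$. Then
\begin{align}
x\otimes y=\sum (A_{s,s'}\otimes B_{t,t'})\otimes\delta_{(s,t)(s',t')^{-1}}.
\end{align}
The coefficient matrix indexed by $\Sigma_1\times\Sigma_2$ is the tensor product of two positive matrices, hence positive. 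By (\ref{eqn:SOS-cone-positive-matrix}) it follows that $x\otimes y\in\mathcal{Q}_{k\ell}(\Sigma_1\times\Sigma_2)$. Since the cones $\mathcal{Q}(\Sigma_1\times\Sigma_2)$ are compatible (\autoref{lem:SOS-cones-MOU}(ii)), we get $A(x\otimes y)A^*\in\mathcal{Q}_n(\Sigma_1\times\Sigma_2)$. Thus $\mathcal{D}^{\max}_n\subseteq\mathcal{Q}_n(\Sigma_1\times\Sigma_2)$.

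The main obstacle is that the max cone is the Archimedeanisation of $\mathcal{D}^{\max}_n$, not $\mathcal{D}^{\max}_n$ itself. We must also check that the kernel $N$ in that construction vanishes. The plan is to pass to the Archimedeanisation through \autoref{prop:Finite-SOS-archimedean}: the cones $\mathcal{Q}_n(\Sigma_1\times\Sigma_2)$ are Archimedean with unit $\delta_{(e,e)}$. So if $r(\delta_e\otimes\delta_e)_n+z\in\mathcal{D}^{\max}_n\subseteq\mathcal{Q}_n(\Sigma_1\times\Sigma_2)$ for all $r>0$, then $z\in\mathcal{Q}_n(\Sigma_1\times\Sigma_2)$. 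For $N=0$, we note that $\mathrm{SOS}(\Sigma_1\times\Sigma_2)$ is an operator system containing $\mathcal{D}^{\max}$ in its cones. The identity map is then a ucp map from the matrix order unit space $(\mathrm{S}\otimes\mathrm{S},\mathcal{D}^{\max},\delta_e\otimes\delta_e)$ into an operator system, and it is injective. Its states therefore separate points, so $N=0$, and the Archimedean closure lands inside $\mathcal{Q}_n(\Sigma_1\times\Sigma_2)$. Combining both inclusions, the identity map is a unital complete order isomorphism.
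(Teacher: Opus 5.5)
Your proof is correct, and it takes a genuinely different route from the paper's. The paper argues only at level one. It asserts that under (\ref{eq_S1S2}) the cone $\mathcal{Q}_1(\Sigma_1\times\Sigma_2)$ equals the cone of sums $\sum_i A_i\otimes B_i$ with $A_i\in\mathcal{Q}_1(\Sigma_1)$ and $B_i\in\mathcal{Q}_1(\Sigma_2)$. It then combines \autoref{rem:maxKPTT}, i.e.\ $\mathrm{SOS}(\Sigma)=\mathrm{OMAX}(V)$ for $V=(\mathrm{S}[\Sigma\Sigma^{-1}],\mathcal{Q}_1(\Sigma),\delta_e)$, with the formula $\mathrm{OMAX}(V_1\otimes V_2)=\mathrm{OMAX}(V_1)\otimes_{\max}\mathrm{OMAX}(V_2)$ from \cite[Proposition 5.13]{KPTT11}.

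You instead compare the cones at every matrix level, using only \autoref{lem:SOS-cones-MOU}(iii), the description (\ref{eqn:SOS-cone-positive-matrix}) and the closedness from \autoref{prop:Finite-SOS-archimedean}. Each step checks out:
\begin{itemize}
\item the diagonal compression $e_i\mapsto e_i\otimes e_i$ produces exactly the generators required in (\ref{eq_dij});
\item the coefficient matrix of $x\otimes y$ is a canonical shuffle of $A\otimes B\geq 0$;
\item the injective ucp identity into $\mathrm{SOS}(\Sigma_1\times\Sigma_2)$ forces $N=0$ and places the Archimedean closure of $\mathcal{D}^{\max}_n$ inside $\mathcal{Q}_n(\Sigma_1\times\Sigma_2)$.
\end{itemize}

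Your route buys more than self-containedness: both level-one ingredients of the paper's argument already fail for $\{0,1\}\subseteq\mathbb{Z}$.

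First, for $\Sigma=\{0,1\}$ one has $\mathcal{Q}_1(\Sigma)=\{a\delta_0+b\delta_1+\bar b\delta_{-1} : a\geq 2|b|\}$. The unital map $\delta_0\mapsto I_2$, $\delta_1\mapsto 2E_{21}$, $\delta_{-1}\mapsto 2E_{12}$ is positive on this cone. However, it sends $\begin{pmatrix}\delta_0&\delta_{-1}\\ \delta_1&\delta_0\end{pmatrix}\in\mathcal{Q}_2(\Sigma)$ to the non-positive matrix $\begin{pmatrix}I_2&2E_{12}\\ 2E_{21}&I_2\end{pmatrix}$. Hence $\mathrm{SOS}(\Sigma)\neq\mathrm{OMAX}(V)$.

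Second, take $\Sigma_1=\Sigma_2=\{0,1\}$ and $y=\delta_{(0,0)}+\delta_{(1,1)}$. Then $yy^*=2\delta_0\otimes\delta_0+\delta_1\otimes\delta_1+\delta_{-1}\otimes\delta_{-1}$ lies in $\mathcal{Q}_1(\Sigma_1\times\Sigma_2)$. Consider the functional ``coefficient of $\delta_0\otimes\delta_0$ minus $4\,\mathrm{Re}$ of the coefficient of $\delta_1\otimes\delta_1$''. It is nonnegative on the cone generated by all $p\otimes q$ with $p\in\mathcal{Q}_1(\Sigma_1)$ and $q\in\mathcal{Q}_1(\Sigma_2)$, yet it equals $-2$ on $yy^*$.

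So the statement is true, and your proof establishes it. The same conclusion also follows by dualising $\mathrm{C}^*(\Gamma_1\times\Gamma_2)^{(\Sigma_1\times\Sigma_2)}\cong\mathrm{C}^*(\Gamma_1)^{(\Sigma_1)}\otimes_{\min}\mathrm{C}^*(\Gamma_2)^{(\Sigma_2)}$ via \autoref{cor:Toeplitz-SOS-dual-op-sys}. The essential point is that the sums-of-squares cones are not determined by their level-one data. Your argument handles the higher matrix levels directly, whereas the paper's reduction to level one does not.
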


\begin{proof}
One checks that, via the identification (\ref{eq_S1S2}), the cone  $\mathcal{Q}_1(\Sigma_1 \times \Sigma_2)$ 
corresponds to the cone 
\begin{align} 
\mathcal{D}_1^\mathrm{max} := \left\{\sum_{i=1}^r A_i \otimes B_i \mid A_i \in \mathcal{Q}_1(\Sigma_1), B_i \in \mathcal{Q}_1(\Sigma_2), r\in\mathbb{N}\right\}.
\end{align}
It follows that 
\begin{align} 
V_1 \otimes V_2 := (\mathrm{S}[\Sigma_1\Sigma_1^{-1}] \otimes \mathrm{S}[\Sigma_2\Sigma_2^{-1}], \mathcal{D}_1^\mathrm{max}, \delta_{e_1} \otimes \delta_{e_2})
\end{align}
is an Archimedean order unit space.
By \cite[Proposition 5.13]{KPTT11} we have 
\begin{align} 
\mathrm{OMAX}(V_1 \otimes V_2) = \mathrm{OMAX}(V_1) \otimes_\mathrm{max} \mathrm{OMAX}(V_2),
\end{align}
and the proof is complete in view of \autoref{rem:maxKPTT}.
\end{proof}

If $\Sigma\subseteq \Gamma$ is a finite subset, we say that an element $x\in \mathrm{M}_n(\mathrm{C}^*(\Gamma))$ is \emph{supported on} $\Sigma$ if $y = \sum_{s\in\Sigma} B_s \otimes \delta_s$, where $B_s \in \mathrm{M}_n$, $s\in \Sigma$. 
We say that an element $x\in \mathrm{M}_n(\mathrm{C}^*(\Gamma))$ \emph{stretches over} $\Sigma$ if $x = \sum_{s,t \in \Sigma} A_{s,t} \otimes \delta_{st^{-1}}$, where $A_{s,t} \in \mathrm{M}_n$, $s,t\in\Sigma$. 

\begin{definition}\label{def:Admitting-a-Fejer--Riesz-lemma}
Let $\Gamma$ be a discrete group and $\Sigma \subseteq \Gamma$ be a finite set.
We say that $\Sigma$ possesses the \emph{complete Fej\'{e}r--Riesz property} if, for every $n \in \mathbb{N}$ and every element $x \in \mathrm{M}_n(\mathrm{C}^*(\Gamma))^+$ that stretches over $\Sigma$, there exist $r\in \bb{N}$ and elements $y_i \in \mathrm{M}_n(\mathrm{C}^*(\Gamma))$, $i\in [r]$, supported on $\Sigma$, such that $x = \sum_{i=1}^r y_iy_i^*$.
\end{definition}

\begin{remark}
\rm 
Recall from \autoref{lem:SOS-cones-MOU} that $\mathcal{Q}_n(\Sigma) \subseteq \mathrm{M}_n(\mathrm{C}^*(\Gamma))^+$; in other words, the canonical inclusion map $\iota : \mathrm{SOS}(\Sigma) \rightarrow \mathrm{C}^*(\Gamma)$ is ucp.
Thus, by the definition of the operator system structure of $\mathrm{SOS}(\Sigma)$, the following are equivalent:
\begin{itemize}
\item[(i)]
$\Sigma$ possesses the complete Fej\'{e}r--Riesz property;

\item[(ii)]
the canonical map $\iota : \mathrm{SOS}(\Sigma) \rightarrow \mathrm{C}^*(\Gamma)$ is a complete order embedding;

\item[(iii)]
$\mathrm{M}_n(\mathrm{C}^*(\Gamma))^+ \cap \mathrm{M}_n(\mathrm{S}[\Sigma\Sigma^{-1}]) \subseteq \mathcal{Q}_n(\Sigma)$ for every $n\in \bb{N}$.
\end{itemize}
\end{remark}

\begin{example}\label{ex:Fejer--Riesz-circle}
By the operator-valued Fej\'{e}r--Riesz lemma for the circle, the subset $\Sigma_N := \{0,\dots,N\} \subseteq \mathbb{Z}$ has the complete Fej\'{e}r--Riesz property.
In this case, every positive matrix $x \in \mathrm{M}_n(\mathrm{C}^*(\mathbb{Z})_{(\Sigma_N\Sigma_N^{-1})})^+$ is in fact a \emph{single} square, that is, we have that $x = yy^*$ for some $y \in \mathrm{M}_n(\mathrm{S}[\Sigma])$. 
\end{example}

As the following example demonstrates, the (complete) Fej\'{e}r--Riesz property depends on the group in which a subset is considered.
While intervals in the group $\mathbb{Z}$ have the complete Fej\'{e}r--Riesz property (\autoref{ex:Fejer--Riesz-circle}), their one-to-one images under the quotient map $\mathbb{Z} \rightarrow C_m$ onto the cyclic group, for large enough $m$, fails to have this property.

\begin{example}\label{ex:Counterexample-FR-Cyclic}
In contrast with \autoref{ex:Fejer--Riesz-circle}, the subsets $\Sigma_N$ do not have the (complete) Fej\'{e}r-Riesz property when considered in the finite cyclic groups $C_m$, for $m > 2N+1$. 
Indeed, consider the operator system $\mathrm{C}^*(C_m)_{(\{-N,\ldots,N\})}$ which corresponds to the subset $\Sigma = \{0,\ldots, N\} \subseteq C_m$. 
Via finite Fourier transform, this operator system can be identified with the operator subsystem of $\mathrm{C}(C_m)$ consisting of functions of the form 
\begin{equation}
f = \left(C_m \ni l \mapsto \sum_{k=-N}^N \widehat f (k) e^{2 \pi i l k} \right).
\label{eq:finite-fourier}
\end{equation}
If the (complete) Fej\'{e}r-Riesz property did hold, it would imply that all such functions $f$ that are positive (as functions in $\mathrm{C}(C_m)$) allow for a decomposition $\widehat f = \sum_{i=1}^r \widehat g_i * \widehat g_i^*$, with $\widehat g \in \mathrm{C}^*(C_m)_{(\{ 0,\ldots, N\})}$. But that would imply that the function
\begin{align} 
\theta \mapsto \sum_{k=-N}^N \widehat f (k) e^ {i k \theta} =\sum_{k=-N}^N \sum_{i=1}^r (\widehat g_i * \widehat g_i^*)(k) e^ {i k \theta} = \sum_{i=1}^r \left\lvert \sum_{k=0}^N \widehat g_i(k) e^ {ik \theta} \right\rvert^ 2,  \quad (\theta \in [0,2\pi))
\end{align}
is positive on the whole of the circle group $\mathbb{T}$. 
This cannot be true as there may exist functions in $\mathrm{C}(C_m)$ with support in Fourier contained in $\{-N, \ldots, N\}$ which are positive as functions on $C_m$, while the corresponding trigonometric polynomial fails to be positive as a function on $\mathbb{T}$; an example is given in Figure \ref{fig:FRcyclic}.
	
We note that Fourier systems in the group $\mathrm{C}^*$-algebras of finite cyclic groups were investigated in \cite{Mui25}, including the remarkable observation that the minimal $\mathrm{C}^*$-covers of their dual operator systems are not necessarily commutative.
\end{example}

\begin{figure}[h]
	\centering
	\includegraphics[scale=.7]{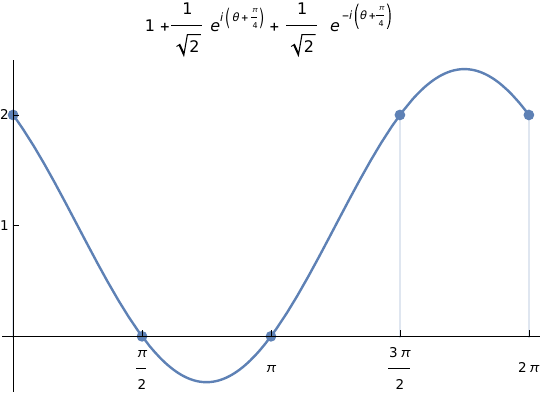}
	\caption{A function defined by the Fourier coefficients $\widehat f_{1} = r e^{i \pi/4}, \widehat f_0 = 1, \widehat f_{-1} =r e^{-i \pi/4}$ and a suitable $r=\sqrt{2}^{-1}$: the function $f$ on $C_4$ as given in \eqref{eq:finite-fourier} is positive but the corresponding trigonometric polynomial $1+2r  \cos (\theta + \pi/4)$ on $\mathbb{T}$ is not. 
    So $\widehat{f}$ is a positive element of the Fourier system $\mathrm{C}^*(C_4)_{(\{-1,0,1\})}$, but does not admit a decomposition $\widehat{f} = \sum_{i=1}^r\widehat{g}_i \ast \widehat{g}_i^*$ with $\widehat{g}_i \in \mathrm{S}[\{\delta_0,\delta_1\}] \subseteq \mathrm{C}^*(C_4)$. 
    }
    \label{fig:FRcyclic}
\end{figure}

\begin{example}\label{ex:Counterexample-Fejer--Riesz}
By \cite{Rud63,Sak80}, for $N \geq 2$, the set $\{0,\dots,N\}^2 \subseteq \mathbb{Z}^2$ does not have the complete Fej\'{e}r--Riesz property.
\end{example}
    	
\subsection{The Toeplitz system}

We now study the operator system of Toeplitz matrices associated to a finite subset $\Sigma$ of a discrete group $\Gamma$.

\begin{definition}
Let $\Gamma$ be a discrete group, $\Sigma\subseteq \Gamma$ be a finite set and $H$ be a Hilbert space. 
\begin{itemize}
\item[(i)] A \emph{$\mathcal{B}(H)$-valued Toeplitz matrix} is an operator $T = (T_{s,t})_{s,t \in \Sigma} \in \mathcal{B}(\ell^2(\Sigma) \otimes H)$ with $T_{s,t} \in \mathcal{B}(H)$, for all $s, t \in \Sigma$, such that $T_{s_1,t_1} = T_{s_2, t_2}$ whenever $s_1t_1^{-1} = s_2t_2^{-1}$.

\item[(ii)] A $\mathcal{B}(H)$-valued Toeplitz matrix $T = (T_{s,t})_{s,t\in\Sigma}$ is called \emph{positive semi-definite} if $T \in \mathcal{B}(\ell^2(\Sigma) \otimes H)^+$.
\end{itemize}
\end{definition}

The \emph{Toeplitz system} $\mathrm{C}^*(\Gamma)^{(\Sigma)}$ is defined as the operator system consisting of all ($\mathbb{C}$-valued) Toeplitz matrices; 
by its definition, $\mathrm{C}^*(\Gamma)^{(\Sigma)}$ is an operator subsystem of the C$^*$-algebra $\mathrm{M}_\Sigma$ of matrices indexed by $\Sigma$. 
Further, the operator system of $\mathcal{B}(H)$-valued Toeplitz matrices is the operator subsystem $\mathrm{C}^*(\Gamma)^{(\Sigma)} \otimes_\mathrm{min} \mathcal{B}(H)$ of $\mathcal{B}(\ell^2(\Sigma,H))$.
The $\mathcal{B}(H)$-valued Toeplitz matrices $T$ correspond bijectively to functions $u : \Sigma\Sigma^{-1} \to \mathcal{B}(H)$ via $T_{s,t} = u(st^{-1})$, $s,t \in \Sigma$.
Therefore we can define the restriction map 
\[
\rho : \mathrm{B}(\Gamma) \rightarrow \mathrm{C}^*(\Gamma)^{(\Sigma)}, u \mapsto (u(st^{-1}))_{s,t\in\Sigma}.
\]
By the definitions of the positive matrix cones over $\mathrm{B}(\Gamma)$ and $\mathrm{C}^*(\Gamma)^{(\Sigma)}$, the map $\rho$ is completely positive.

Our interest in the Toeplitz system (and the notation $\mathrm{C}^*(\Gamma)^{(\Sigma)}$) comes from its appearance in spectral truncations in noncommutative geometry \cite{CvS21,LvS24,Lei24} and its analysis in \cite{Far21}.
It is straightforward to check that 
\begin{align} 
\mathrm{C}^*(\Gamma)^{(\Sigma)} = P_{\Sigma} \mathrm{C}^*_\mathrm{r}(\Gamma) P_{\Sigma},
\end{align}
where $P_{\Sigma} : \ell^2(\Gamma) \rightarrow \ell^2(\Sigma)$ is the canonical orthogonal projection.

\subsection{Duality between the SOS and the Toeplitz systems}

We next show that Toeplitz operator systems are compatible with the sums-of-squares cone structure.

\begin{proposition}\label{prop:PD-functions-Toeplitz-CP-maps-SOS}
Let $\Gamma$ be a discrete group, $\Sigma\subseteq \Gamma$ be a finite set and $H$ be a Hilbert space.  
\begin{itemize}
\item[(i)] If $T = (T_{s,t})_{s,t\in\Sigma} \in \mathrm{C}^*(\Gamma)^{(\Sigma)} \otimes_\mathrm{min} \mathcal{B}(H)$ is a positive semi-definite $\mathcal{B}(H)$-valued Toeplitz matrix, then the map $\phi_T : \mathrm{SOS}(\Sigma) \to \cl B(H)$, given by $\phi_T(\delta_{st^{-1}}) = T_{s,t}$, $s,t\in \Sigma$, is completely positive. 
	
\item[(ii)] If  $\phi : \mathrm{SOS}(\Sigma) \to \cl B(H)$ is a completely positive map then there exists a unique positive semi-definite $\mathcal{B}(H)$-valued Toeplitz matrix $T_\phi$ such that $\phi = \phi_{T_{\phi}}$. 
\end{itemize}
\end{proposition}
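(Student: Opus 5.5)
The plan is to work directly with the description of the cones $\mathcal{Q}_n(\Sigma)$ given in \autoref{lem:SOS-cones-positive-matrix-version}, namely as the elements $\sum_{s,t\in\Sigma} A_{s,t}\otimes\delta_{st^{-1}}$ with $(A_{s,t})_{s,t\in\Sigma}\in \mathrm{M}_\Sigma(\mathrm{M}_n)^+$.

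For (i), we first observe that $\phi_T$ is well defined. The elements $\delta_g$, $g\in\Sigma\Sigma^{-1}$, are linearly independent in $\mathrm{C}^*(\Gamma)$, and the Toeplitz condition says that $T_{s,t}$ depends only on $st^{-1}$, so $\phi_T(\delta_g):=T_{s,t}$ for any $s,t$ with $st^{-1}=g$ gives a linear map. For complete positivity, take an element $x=\sum_{s,t} A_{s,t}\otimes\delta_{st^{-1}}\in\mathcal{Q}_n(\Sigma)$ with $A=(A_{s,t})\geq 0$. Then
\[
\phi_T^{(n)}(x)=\sum_{s,t} A_{s,t}\otimes T_{s,t}\in \mathrm{M}_n(\mathcal{B}(H)).
\]
Factor $A=BB^*$ as in the proof of \autoref{lem:SOS-cones-positive-matrix-version}, so that $A_{s,t}=\sum_r B_{s,r}B_{t,r}^*$. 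This gives
\[
\phi_T^{(n)}(x)=\sum_{r\in\Sigma} V_r\,T\,V_r^*, \qquad V_r:=\big(B_{s,r}\otimes \mathbf{1}_H\big)_{s\in\Sigma}\in\mathcal{B}\big(\ell^2(\Sigma)\otimes H,\ \mathbb{C}^n\otimes H\big),
\]
where $V_r$ is the row operator with entries indexed by $s$. Each summand is positive because $T\geq 0$. Equivalently, $\phi_T^{(n)}(x)$ is a compression of the Schur-type product $A\circ T$, which is positive. Writing this index bookkeeping out cleanly is the only real work in (i).

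For (ii), given a cp map $\phi$, we set $T_\phi:=(\phi(\delta_{st^{-1}}))_{s,t\in\Sigma}$. This matrix is Toeplitz by construction, and it is the unique candidate satisfying $\phi=\phi_{T_\phi}$, since $\{\delta_{st^{-1}}\}$ spans $\mathrm{S}[\Sigma\Sigma^{-1}]$. To see that $T_\phi\geq 0$, enumerate $\Sigma=\{s_1,\dots,s_m\}$. By the proof of \autoref{lem:SOS-cones-MOU}(iii), the matrix $(\delta_{s_is_j^{-1}})_{i,j}$ lies in $\mathcal{Q}_m(\Sigma)$. Complete positivity of $\phi$ then gives $\phi^{(m)}\big((\delta_{s_is_j^{-1}})_{i,j}\big)=T_\phi\geq 0$.

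I expect no serious obstacle. The main points of care are the well-definedness of $\phi_T$ via linear independence of the group elements, and getting the tensor and index ordering right in the compression identity of part (i).
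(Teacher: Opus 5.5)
Your argument is correct. Part (ii) is the same as the paper's: evaluate $\phi^{(|\Sigma|)}$ at the generator $(\delta_{st^{-1}})_{s,t\in\Sigma}\in\mathcal{Q}_{|\Sigma|}(\Sigma)$.

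Part (i), however, takes a different route. The paper never works with a general element of $\mathcal{Q}_n(\Sigma)$. It appeals to the minimality statement in \autoref{lem:SOS-cones-MOU}(iii), in effect applied to the compatible family of cones $\{x : \phi_T^{(n)}(x)\geq 0\}$. This reduces the problem to the generators $(\delta_{s_is_j^{-1}})_{i,j=1}^n$ with $s_1,\dots,s_n\in\Sigma$, whose images are $(T_{s_i,s_j})_{i,j}$. For distinct $s_i$ these are principal submatrices of $T$, and the paper handles repeated indices by an inductive argument on duplicated rows and columns.

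You instead use the description of $\mathcal{Q}_n(\Sigma)$ from \autoref{lem:SOS-cones-positive-matrix-version} together with the Gram factorisation $A=BB^*$. This exhibits $\phi_T^{(n)}(x)$ directly as a sum of congruences of $T$, i.e.\ as a compression of the block Schur product $(A_{s,t}\otimes T_{s,t})_{s,t}$. Your route is self-contained, gives an explicit formula, and makes the repeated-index case disappear. The paper's route instead recycles the minimality lemma and needs only positivity of compressions of $T$. You also make explicit the well-definedness of $\phi_T$ (linear independence of the $\delta_g$ plus the Toeplitz condition), which the paper leaves implicit.

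One piece of bookkeeping needs fixing. For $n>1$ the expression $V_rTV_r^*$ is not well typed, since the entries $B_{s,r}\otimes\mathbf{1}_H$ act on $\mathbb{C}^n\otimes H$ rather than mapping $H$ into it. Take $V_r$ to be the row operator $(B_{s,r}\otimes\mathbf{1}_H)_{s\in\Sigma}$ from $\ell^2(\Sigma)\otimes\mathbb{C}^n\otimes H$ to $\mathbb{C}^n\otimes H$. Replace $T$ by its amplification $\tilde{T}:=(\mathbf{1}_n\otimes T_{s,t})_{s,t\in\Sigma}$, which is unitarily equivalent to $\mathbf{1}_n\otimes T\geq 0$. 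Then $V_r\tilde{T}V_r^*=\sum_{s,t}B_{s,r}B_{t,r}^*\otimes T_{s,t}$, and summing over $r$ gives $\phi_T^{(n)}(x)\geq 0$, as you intended.
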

	
\begin{proof}
(i) Let $T = (T_{s,t})_{s,t\in\Sigma} \in \mathrm{C}^*(\Gamma)^{(\Sigma)} \otimes_\mathrm{min} \mathcal{B}(H)$ be a positive semi-definite $\mathcal{B}(H)$-valued Toeplitz matrix.
By the proof of \autoref{lem:SOS-cones-MOU} (iii), it suffices to check that the block operator matrix $\phi_T^{(n)}((\delta_{s_is_j^{-1}})_{i,j=1}^n)$ with entries in $\mathcal{B}(H)$ is positive for any $s_1, \dots, s_n \in \Sigma$.
A direct computation gives
\begin{align}\label{eqn:T-minors-amplified-phi_u}
\phi_T^{(n)}((\delta_{s_is_j^{-1}})_{i,j=1}^n) 
= (\phi_T(\delta_{s_is_j^{-1}}))_{i,j=1}^n
= (T_{s_i, s_j})_{i,j=1}^n.
\end{align} 
If the $s_i$'s are pairwise distinct then $(T_{s_i, s_j})_{i,j=1}^n$ is positive semi-definite as a principal minor of the positive semi-definite matrix $T$.
Suppose that $s_1 = s_2$ and the $s_2, \dots, s_n$ are distinct; then we can write
\begin{align}
(T_{s_i, s_j})_{i,j=1}^n = \begin{bmatrix} T_{s_2, s_2} & R \\ C & (T_{s_i,s_j})_{i,j=2}^n \end{bmatrix}
\end{align}
where $R$ and $C$ are the first row and the first column of $(T_{s_i,s_j})_{i,j=2}^n$, respectively.
The matrix $(T_{s_i,s_j})_{i,j=2}^n$ is positive semi-definite as a principal minor of $T$, and thus $(T_{s_i, s_j})_{i,j=1}^n$ is positive semi-definite (see for example \cite[Remark 5.1.1]{EKT25}).
Inductively we have that $(T_{s_i, s_j})_{i,j=1}^n$ is positive semi-definite for any collection $s_1, \dots, s_n \in \Sigma$.

\smallskip
\noindent
(ii) Let $\phi \in \mathcal{CP}(\mathrm{SOS}(\Sigma),\mathcal{B}(H))$.
Replacing $\phi_T$ by $\phi$, $T$ by $T_\phi$ in (\ref{eqn:T-minors-amplified-phi_u}) and reading backwards for all $s_i \in \Sigma$, we see that $T_\phi$ is positive semi-definite.

The uniqueness clause follows from the fact  that the assignments $T \mapsto \phi_T$ and $\phi \mapsto T_\phi$ are inverse to each other.
\end{proof}

\begin{corollary}\label{cor:Toeplitz-SOS-dual-op-sys}
Let $\Gamma$ be a discrete group and $\Sigma \subseteq \Gamma$ be a finite subset.
The map
\begin{align}
\varphi : \mathrm{C}^*(\Gamma)^{(\Sigma)} &\rightarrow \mathrm{SOS}(\Sigma)^\mathrm{d}, \ \ \ 
T \mapsto \phi_T,
\end{align}
is a complete order isomorphism, and hence its dual map 
\begin{align} 
\varphi^\mathrm{d} : \mathrm{SOS}(\Sigma) \rightarrow (\mathrm{C}^*(\Gamma)^{(\Sigma)})^\mathrm{d}
\end{align}
is a complete order isomorphism.
Furthermore, 
\begin{align} 
\varphi(\mathbf{1}) = \left(\delta_{st^{-1}} \mapsto \begin{cases}
1, \text{if } s=t \\
0, \text{otherwise}
\end{cases}\right) 
\ \mbox{ and } \ 
\varphi^\mathrm{d}(\delta_e) = \left(T \mapsto \frac{1}{\lvert\Sigma\rvert} \mathrm{Tr}(T)\right).
\end{align}
\end{corollary}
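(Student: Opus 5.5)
The plan is to read the first claim off \autoref{prop:PD-functions-Toeplitz-CP-maps-SOS}, applied with $H = \mathbb{C}^n$, and then to obtain the dual statement by duality in finite dimensions.

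\emph{Linear bijection.} Toeplitz matrices $T$ correspond bijectively to functions $u : \Sigma\Sigma^{-1} \to \mathbb{C}$ via $T_{s,t} = u(st^{-1})$. Linear functionals on $\mathrm{S}[\Sigma\Sigma^{-1}]$ are determined by their values on the basis $\{\delta_g\}_{g\in\Sigma\Sigma^{-1}}$. Hence $\varphi$ is a linear bijection onto $\mathrm{SOS}(\Sigma)^\mathrm{d}$. It is well defined, because $\phi_T(\delta_{st^{-1}}) = T_{s,t}$ depends only on $st^{-1}$.

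\emph{Matrix levels.} We identify $\mathrm{M}_n(\mathrm{C}^*(\Gamma)^{(\Sigma)})$ with $\mathrm{C}^*(\Gamma)^{(\Sigma)} \otimes_\mathrm{min} \mathrm{M}_n$, that is, with $\mathrm{M}_n$-valued Toeplitz matrices. Its positive cone consists of the positive semi-definite ones, since the minimal structure is inherited from $\mathrm{M}_\Sigma \otimes \mathrm{M}_n$. On the other side, $\mathrm{M}_n(\mathrm{SOS}(\Sigma)^\mathrm{d})^+ = \mathcal{CP}(\mathrm{SOS}(\Sigma),\mathrm{M}_n)$. Under these identifications $\varphi^{(n)}$ sends $T$ to $\phi_T$. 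Part (i) of \autoref{prop:PD-functions-Toeplitz-CP-maps-SOS} says $\varphi^{(n)}$ maps positives to positives. Part (ii) says every cp map is $\phi_{T_\phi}$ with $T_\phi \geq 0$, so $(\varphi^{(n)})^{-1}$ is positive. Thus $\varphi$ is a complete order isomorphism. The main point needing care is this bookkeeping: one must check that the matrix-level identification of $\mathrm{M}_n(\mathrm{SOS}(\Sigma)^\mathrm{d})$ with $\mathrm{M}_n$-valued functionals on $\mathrm{S}[\Sigma\Sigma^{-1}]$ agrees entrywise with $\varphi^{(n)}$. This is immediate from the definitions.

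\emph{Dual map.} Since everything is finite dimensional, the adjoint of a complete order isomorphism is a complete order isomorphism: both $\varphi^\mathrm{d}$ and $(\varphi^{-1})^\mathrm{d}$ are cp. This gives a complete order isomorphism
\[
(\mathrm{SOS}(\Sigma)^\mathrm{d})^\mathrm{d} \to (\mathrm{C}^*(\Gamma)^{(\Sigma)})^\mathrm{d}.
\]
We compose it with the canonical embedding $\mathrm{SOS}(\Sigma) \to \mathrm{SOS}(\Sigma)^{\mathrm{dd}}$. That embedding is a unital complete order embedding by \cite[Proposition 6.2]{KPTT13}, and it is surjective by dimension count, so it is a complete order isomorphism. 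We therefore regard $\varphi^\mathrm{d}$ as the map $x \mapsto (T \mapsto \varphi(T)(x))$.

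\emph{Formulas.} Since $\mathbf{1}$ is the identity matrix, $\varphi(\mathbf{1})$ sends $\delta_{st^{-1}}$ to $\mathbf{1}_{s,t}$, which is $1$ if $s = t$ and $0$ otherwise. This is consistent because $st^{-1} = e$ exactly when $s = t$. For the second formula, $\varphi^\mathrm{d}(\delta_e)(T) = \phi_T(\delta_e) = T_{s,s}$, and this value is the same for every $s \in \Sigma$. Averaging over $s$ gives $\frac{1}{\lvert\Sigma\rvert}\mathrm{Tr}(T)$.
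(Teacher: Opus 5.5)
Your proposal is correct and follows the paper's own argument. You read off the complete order isomorphism from \autoref{prop:PD-functions-Toeplitz-CP-maps-SOS} at each matrix level, obtain the dual statement by identifying the finite-dimensional $\mathrm{SOS}(\Sigma)$ with its bidual, and derive the trace formula from $\varphi^\mathrm{d}(\delta_{st^{-1}})(T) = T_{s,t}$; the only difference is that you write out bookkeeping the paper leaves implicit, such as well-definedness, the matrix-level identifications, and the constancy of the diagonal of a Toeplitz matrix.
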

	
\begin{proof}
The fact that the map $\varphi$ is a complete order isomorphism and the computation of $\varphi$ evaluated at the unit matrix follow directly from \autoref{prop:PD-functions-Toeplitz-CP-maps-SOS}.
Since $\mathrm{SOS}(\Sigma)$ is finite dimensional it coincides with its double dual and thus $\varphi^\mathrm{d}$ is also a complete order isomorphism.
To see that $\varphi^\mathrm{d}(\delta_e)$ is the normalised trace on the Toeplitz system, note that $\varphi^\mathrm{d}(\delta_{st^{-1}})(T) = \phi_T(\delta_{st^{-1}}) = T_{s,t}$, and the claim follows.
\end{proof}

\subsection{The operator extension properties}

Consider the canonical injective ucp map 
\begin{align} 
\iota : \mathrm{SOS}(\Sigma) \rightarrow \mathrm{C}^*(\Gamma), 
\end{align}
and the canonical surjective cp map
\begin{align} 
\rho : \mathrm{B}(\Gamma) \rightarrow \mathrm{C}^*(\Gamma)^{(\Sigma)}.
\end{align}
By virtue of \autoref{cor:Toeplitz-SOS-dual-op-sys} the restriction map $\rho$ is the dual map of the inclusion map $\iota$, and hence completely contractive.
Here we will examine when this map is an $\mathbf{MVS}$-/$\mathbf{OSp}$-quotient map.

\begin{definition}
Let $\Gamma$ be a discrete group.
A finite set $\Sigma \subseteq \Gamma$ is said to possess the \emph{positive operator extension property} if, for every Hilbert space $H$, the following condition holds:
\begin{quote}
Condition (PE): \quad For every positive semi-definite $\mathcal{B}(H)$-valued Toeplitz matrix $T = (T_{s,t})_{s,t\in\Sigma} \in \mathrm{C}^*(\Gamma)^{(\Sigma)} \otimes_\mathrm{min} \mathcal{B}(H)$, there exists a positive semi-definite function $u \in \mathrm{B}(\Gamma,H)^+$ such that $T_{s,t} = u(st^{-1})$, for all $s,t \in \Sigma$.
\end{quote}

Similarly, we say that $\Sigma$ possesses the \emph{cp extension property} if condition (PE) holds for every finite dimensional Hilbert space $H$.
\end{definition}

\begin{remark}\label{rmk:BW-argument-positive-operator-extension}
Note that if a finite set $\Sigma \subseteq \Gamma$ has the positive operator extension property, it trivially has the cp extension property.
Conversely, assume that $\Sigma$ has the cp extension property and let $H$ be a Hilbert space.
Let $\cl F(H)$ be the set of all finite rank projections in $\cl B(H)$, ordered by inclusion; clearly, $\cl F(H)$ is a directed set. 
Assume that $T = (T_{s,t})_{s,t\in\Sigma}$ is a positive semi-definite $\mathcal{B}(H)$-valued Toeplitz matrix.
Then, for every $P\in \cl F(H)$, we have
\begin{align} 
(I\otimes P)T(I\otimes P)\in \left(\mathrm{C}^*(\Gamma)^{(\Sigma)} \otimes_\mathrm{min} \mathcal{B}(PH)\right)^+.
\end{align}
By assumption, there exists a positive semi-definite function $u_P \in \mathrm{B}(\Gamma,PH)^+$ such that 
\begin{align} 
PT_{s,t}P = u_P(st^{-1}), \text{ for all } s,t \in \Sigma.
\end{align}
As $PH\subseteq H$, we may consider $u_P$ as elements of $\mathrm{B}(\Gamma,H)^+$.
By virtue of (\ref{eqn:Pos-def-fct-cp-maps-Gamma}), we consider further $u_P\in \cl{CP}(C^*(\Gamma),\cl B(H))$, and equip the latter space with Arveson's BW topology (see \cite{Arv69}), that is, the topology of point-weak* convergence. 
By \cite{Arv69}, the BW toplogy is compact on bounded sets; thus, the net $(u_P)_{P\in \cl F(H)}$ has a BW cluster point, say $u\in \mathrm{B}(\Gamma,H)^+$.
If $s,t\in \Sigma$ then $u(st^{-1})$ is a weak* cluster point of $((I\otimes P)T_{s,t}(I\otimes P))_{P\in \cl F(H)}$, and hence coincides with $T_{s,t}$, in other words, $u$ extends $T$. 
\end{remark}

\begin{proposition}\label{prop:Sigma-ext-property-quotient-map}
Let $\Gamma$ be a discrete group and $\Sigma \subseteq \Gamma$ be a finite set.
The set $\Sigma$ has the positive operator extension property if and only if the restriction map 
\begin{align} 
\rho : \mathrm{B}(\Gamma) \rightarrow \mathrm{C}^*(\Gamma)^{(\Sigma)}, 
u \mapsto (u(st^{-1}))_{s,t\in\Sigma},
\end{align}
is an $\mathbf{MVS}$-quotient map.
\end{proposition}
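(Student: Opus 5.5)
We argue directly from the definition of an $\mathbf{MVS}$-quotient map rather than passing through \autoref{prop:Equiv-embeddings-quotients}; this avoids any issue with $\mathrm{B}(\Gamma)$ not being literally the dual of a finite-dimensional system. First, $\rho$ is surjective in either direction of the equivalence. If $\Sigma$ has the positive operator extension property, every Toeplitz matrix is a linear combination of positive semi-definite ones (the Toeplitz system is an operator system, so its positive cone spans its hermitian part), and each of these lies in the image of $\rho$. Conversely, surjectivity is part of the definition of an $\mathbf{MVS}$-quotient map. Hence the content of the statement concerns the induced bijection
\begin{align}
\tilde\rho : \setfrac{\mathrm{B}(\Gamma)}{\ker(\rho)} \rightarrow \mathrm{C}^*(\Gamma)^{(\Sigma)},
\end{align}
which is cp because $\rho$ is. We must show that $\tilde\rho^{-1}$ is cp exactly when (PE) holds.

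Fix $n\in\mathbb{N}$. By (\ref{eqn:Matrix-cones-quotient}), an element $(u_{ij}+\ker\rho)_{i,j}$ is positive in $\mathrm{M}_n(\setfrac{\mathrm{B}(\Gamma)}{\ker(\rho)})$ iff there is $w\in \mathrm{M}_n(\ker\rho)$ with $u+w\in \mathrm{M}_n(\mathrm{B}(\Gamma))^+=\mathrm{B}(\Gamma,\mathbb{C}^n)^+$. Since $\rho^{(n)}(w)=0$ means $w$ vanishes on $\Sigma\Sigma^{-1}$, this condition says precisely that $u|_{\Sigma\Sigma^{-1}}$ extends to a positive semi-definite $\mathrm{M}_n$-valued function on $\Gamma$. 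On the other side, $\mathrm{M}_n(\mathrm{C}^*(\Gamma)^{(\Sigma)})^+$ is canonically the cone of positive semi-definite $\mathrm{M}_n$-valued Toeplitz matrices, via the shuffle identification $\mathrm{M}_n(\mathrm{M}_\Sigma)\cong \mathrm{M}_\Sigma(\mathrm{M}_n)$, which preserves positivity.

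It follows that $(\tilde\rho^{-1})^{(n)}$ is positive iff every positive semi-definite $\mathrm{M}_n$-valued Toeplitz matrix $T$ equals $\rho^{(n)}(u)$ for some $u\in \mathrm{B}(\Gamma,\mathbb{C}^n)^+$. This is condition (PE) for $H=\mathbb{C}^n$. To see this, take any $u$ with $\rho^{(n)}(u)=T$, which exists by surjectivity; positivity of $\tilde\rho^{-1}$ at $T$ then produces $w$ with $u+w$ a positive semi-definite extension of $T$. Conversely, a positive semi-definite extension of $T$ directly exhibits $\tilde\rho^{-1}(T)$ as positive. Consequently, $\rho$ being an $\mathbf{MVS}$-quotient map is equivalent to the cp extension property.

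The remaining step, and the only nontrivial one, is to pass from finite-dimensional $H$ to arbitrary $H$. For this we invoke \autoref{rmk:BW-argument-positive-operator-extension}, which shows that the cp extension property implies the positive operator extension property via an Arveson BW-compactness argument; the reverse implication is trivial. The point needing care is the bookkeeping of the shuffle isomorphism, namely that the matrix cones on $\mathrm{C}^*(\Gamma)^{(\Sigma)}$ coincide with positive semi-definite $\mathrm{M}_n$-valued Toeplitz matrices. This holds because $\mathrm{C}^*(\Gamma)^{(\Sigma)}\otimes_{\min}\mathrm{M}_n$ sits inside $\mathrm{M}_\Sigma\otimes\mathrm{M}_n$.
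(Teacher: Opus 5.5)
Your proof is correct and follows essentially the same route as the paper. Like the paper, you identify $\mathrm{M}_n(\setfrac{\mathrm{B}(\Gamma)}{\ker(\rho)})^+$ with $\mathrm{B}(\Gamma,\mathbb{C}^n)^+|_{\Sigma\Sigma^{-1}}$, read positivity of $(\tilde\rho^{-1})^{(n)}$ as condition (PE) for $H=\mathbb{C}^n$, and pass to arbitrary $H$ via \autoref{rmk:BW-argument-positive-operator-extension}; your explicit check that $\rho$ is surjective is a small addition the paper takes for granted.
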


\begin{proof}
Recall from \autoref{subsec:Quotients} that $\rho$ is an $\mathbf{MVS}$-quotient map if and only if the induced map 
\begin{align} 
\tilde{\rho} : \setfrac{\mathrm{B}(\Gamma)}{\ker(\rho)} \rightarrow \mathrm{C}^*(\Gamma)^{(\Sigma)}
\end{align}
is a complete order isomorphism.

Assume that $\Sigma$ has the positive operator and hence the cp extension property.
Then for all $n \in \mathbb{N}$ and for every positive semi-definite Toeplitz matrix 
\begin{align}
T = (T_{s,t})_{s,t\in\Sigma}\in \mathrm{M}_n(\mathrm{C}^*(\Gamma)^{(\Sigma)})^+,
\end{align}
there is a positive semi-definite function $u : \Gamma \rightarrow \mathrm{M}_n$ with $T_{s,t} = u(st^{-1})$, for all $s,t\in\Sigma$;
in other words, we have that $$(\tilde{\rho}^{-1})^{(n)}(T) = u|_{\Sigma\Sigma^{-1}} \in \mathrm{B}(\Gamma,\mathbb{C}^n)^+|_{\Sigma\Sigma^{-1}} = \mathrm{M}_n(\setfrac{\mathrm{B}(\Gamma)}{\ker(\rho)})^+,$$
showing that the cp map $\tilde{\rho}$ is a complete order isomorphism.

Conversely, if $\rho$ is an $\mathbf{MVS}$-quotient map, then given a positive semi-definite Toeplitz matrix 
\begin{align} 
T = (T_{s,t})_{s,t\in\Sigma}\in \mathrm{M}_n(\mathrm{C}^*(\Gamma)^{(\Sigma)})^+,
\end{align}
the pre-image $u := (\tilde{\rho}^{-1})^{(n)}(T)$ is an element of 
\begin{align} 
\mathrm{M}_n(\setfrac{\mathrm{B}(\Gamma)}{\ker(\rho)})^+ = \mathrm{B}(\Gamma,\mathbb{C}^n)^+|_{\Sigma\Sigma^{-1}}.
\end{align}
That is, $u$ is the restriction of a positive semi-definite function $v : \Gamma \rightarrow \mathrm{M}_n$ to $\Sigma\Sigma^{-1}$, so $v(st^{-1}) = u(st^{-1}) = T_{s,t}$, for all $s,t\in\Sigma$. 
Hence, $\Sigma$ has the cp extension property.
By \autoref{rmk:BW-argument-positive-operator-extension} it follows that $\Sigma$ has the positive operator extension property.
\end{proof}

\begin{definition}
Let $\Gamma$ be a discrete group.
A finite set $\Sigma \subseteq \Gamma$ is said to possess the \emph{bounded operator extension property} if, for every Hilbert space $H$, the following condition holds:
\begin{quote}
    Condition (BE): \quad For every Toeplitz matrix $T = (T_{s,t})_{s,t\in\Sigma} \in \mathrm{C}^*(\Gamma)^{(\Sigma)} \otimes_\mathrm{min} \mathcal{B}(H)$, there exists a function $u \in \mathrm{B}(\Gamma,H)$ such that $T_{s,t} = u(st^{-1})$, for all $s,t \in \Sigma$, and $\lVert T \rVert = \lVert \phi_{u} \rVert_\mathrm{cb}$.
\end{quote}

Similarly, we say that $\Sigma$ possesses the \emph{cb extension property} if condition (BE) holds for every finite dimensional Hilbert space $H$.
\end{definition}

An analogue of \autoref{rmk:BW-argument-positive-operator-extension} applies:

\begin{remark}\label{rmk:BW-argument-bounded-operator-extension}
If a finite set $\Sigma \subseteq \Gamma$ has the bounded operator extension property, it trivially has the cb extension property.
Conversely, a BW-argument analogous to \autoref{rmk:BW-argument-positive-operator-extension} shows that the cb extension property implies the bounded operator extension property.
\end{remark}

\begin{proposition}\label{prop:Sigma-ext-property-quotient-map-2}
Let $\Gamma$ be a discrete group and $\Sigma \subseteq \Gamma$ be a finite set.
The set $\Sigma$ has the bounded operator extension property if and only if the restriction map 
\begin{align} 
\rho : \mathrm{B}(\Gamma) \rightarrow \mathrm{C}^*(\Gamma)^{(\Sigma)},
u \mapsto (u(st^{-1}))_{s,t\in\Sigma},
\end{align}
is an $\mathbf{OSp}$-quotient map.
\end{proposition}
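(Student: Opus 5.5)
The plan is to mirror the proof of \autoref{prop:Sigma-ext-property-quotient-map}, replacing matrix cones by matrix norms. Recall from \autoref{subsec:OSp} that $\rho$ is an $\mathbf{OSp}$-quotient map if and only if it is surjective (clear, since every Toeplitz matrix is a linear combination of positive semi-definite ones, each extendable from a restriction; alternatively via the dual statement below) and the induced map
\begin{align}
\tilde{\rho} : \setfrac{\mathrm{B}(\Gamma)}{\ker(\rho)} \rightarrow \mathrm{C}^*(\Gamma)^{(\Sigma)}
\end{align}
is a complete isometry. Since $\rho$ is the dual of the ucp inclusion $\iota : \mathrm{SOS}(\Sigma)\to \mathrm{C}^*(\Gamma)$ (via \autoref{cor:Toeplitz-SOS-dual-op-sys}), $\rho$ is completely contractive, so $\tilde{\rho}$ is completely contractive. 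The issue is therefore the reverse inequality $\lVert (\tilde\rho^{-1})^{(n)}(T)\rVert \leq \lVert T\rVert$ for $T\in \mathrm{M}_n(\mathrm{C}^*(\Gamma)^{(\Sigma)})$.

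The key identification is that, for $T \in \mathrm{M}_n(\mathrm{C}^*(\Gamma)^{(\Sigma)}) = \mathrm{C}^*(\Gamma)^{(\Sigma)}\otimes_{\min}\mathrm{M}_n$, the quotient norm is
\begin{align}
\lVert (\tilde{\rho}^{-1})^{(n)}(T)\rVert_n = \inf\{\lVert u\rVert_{\mathrm{B}(\Gamma,\mathbb{C}^n)} \mid u \in \mathrm{B}(\Gamma,\mathbb{C}^n),\ u(st^{-1}) = T_{s,t},\ s,t\in\Sigma\},
\end{align}
with $\lVert u\rVert_{\mathrm{B}(\Gamma,\mathbb{C}^n)} = \lVert \phi_u\rVert_{\mathrm{cb}}$, by the definition of the operator space structure on $\mathrm{B}(\Gamma)$ as the dual of $\mathrm{C}^*(\Gamma)$ and of quotient matrix norms. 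I would also note that the infimum is attained: the set of $u$ extending $T$ with $\lVert\phi_u\rVert_{\mathrm{cb}}\le \lVert T\rVert+1$ is BW-compact (cb maps of bounded norm into $\mathrm{M}_n$), and the extension condition is BW-closed, so a minimiser exists. Hence complete isometry of $\tilde\rho$ is exactly the statement that every $T$ admits an extension $u$ with $\lVert\phi_u\rVert_{\mathrm{cb}} = \lVert T\rVert$ (the inequality $\geq$ being automatic from complete contractivity of $\rho$), i.e.\ the cb extension property.

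So: if $\Sigma$ has the bounded operator extension property, it has the cb extension property, whence for each $n$ and $T$ the infimum above is $\le \lVert T\rVert$, giving complete isometry of $\tilde\rho$ and (taking any $T$) surjectivity. Conversely, if $\rho$ is an $\mathbf{OSp}$-quotient map, then for each $T\in\mathrm{M}_n(\mathrm{C}^*(\Gamma)^{(\Sigma)})$ the quotient norm equals $\lVert T\rVert$, and by the attainment argument there is $u\in\mathrm{B}(\Gamma,\mathbb{C}^n)$ extending $T$ with $\lVert\phi_u\rVert_{\mathrm{cb}}=\lVert T\rVert$; this is the cb extension property, and \autoref{rmk:BW-argument-bounded-operator-extension} upgrades it to the bounded operator extension property. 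The main obstacle I expect is the careful justification that the matrix norms on $\mathrm{M}_n(\mathrm{B}(\Gamma))$ are $\lVert\phi_u\rVert_{\mathrm{cb}}$ for $u\in \mathrm{B}(\Gamma,\mathbb{C}^n)$ (i.e.\ that every cb map $\mathrm{C}^*(\Gamma)\to\mathrm{M}_n$ comes from such $u$, using Wittstock's decomposition and \autoref{prop:cpmaps}), together with the compactness argument ensuring the infimum is attained, so that ``norm $\le\lVert T\rVert+\varepsilon$'' can be replaced by exact equality.
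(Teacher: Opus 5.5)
Your proposal is correct and is essentially the paper's own argument: the forward direction is the same chain $\lVert T\rVert \le \lVert u+\ker\rho^{(n)}\rVert \le \lVert\phi_u\rVert_{\mathrm{cb}} = \lVert T\rVert$, and in the converse you attain the quotient-norm infimum by BW-compactness of cb-balls in $\mathcal{CB}(\mathrm{C}^*(\Gamma),\mathrm{M}_n)$, which is what the paper does with a minimising sequence $v_m\in\ker\rho^{(n)}$ before upgrading via \autoref{rmk:BW-argument-bounded-operator-extension}. Two small remarks: as in the paper, the complete contractivity of $\rho$ refers to the operator space structure that $\mathrm{C}^*(\Gamma)^{(\Sigma)}$ carries as $\mathrm{SOS}(\Sigma)^{\mathrm{d}}$, and it fails for the concrete norm of $\mathrm{M}_\Sigma$ (e.g.\ the constant function $1$ maps to the all-ones matrix of norm $\lvert\Sigma\rvert$); and your first justification of surjectivity does not work, since psd Toeplitz matrices need not extend to psd functions, although surjectivity holds anyway (finitely supported functions lie in $\mathrm{B}(\Gamma)$) and is supplied by the hypotheses in each direction.
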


\begin{proof}
Recall that $\rho$ is an $\mathbf{OSp}$-quotient map if and only if the induced cc map
\begin{align} 
\tilde{\rho} : \setfrac{\mathrm{B}(\Gamma)}{\ker(\rho)} \rightarrow \mathrm{C}^*(\Gamma)^{(\Sigma)}
\end{align}
is completely isometric.

Assume that $\Sigma$ has the bounded operator, and hence the cb, extension property.
For $n \in \mathbb{N}$ and $T \in \mathrm{M}_n(\mathrm{C}^*(\Gamma)^{(\Sigma)})$ let $u \in \mathrm{B}(\Gamma, \mathbb{C}^n)$ be an element of the $\mathrm{M}_n$-valued Fourier--Stieltjes algebra viewed as function $\Gamma \rightarrow \mathrm{M}_n$ such that
\begin{align} 
T = (u(st^{-1}))_{s, t \in \Sigma} = \rho^{(n)}(u)
\ \ \mbox{ and } \ \
\lVert T \rVert = \lVert \phi_{u} \rVert_\mathrm{cb}.
\end{align}
Then we have
\begin{align}
\lVert T\rVert 
& = \lVert \rho^{(n)}(u) \rVert
\leq \|u + \ker \rho^{(n)} \rVert
\leq \lVert u \rVert 
= \lVert \phi_u \rVert_{\mathrm{cb}}
= \lVert T \rVert,
\end{align}
showing that $\tilde{\rho}^{(n)}$ is isometric, as required.

Conversely, suppose that $\tilde{\rho}$ is completely isometric and fix $n \in \mathbb{N}$.
By surjectivity and the assumption there exists a $u \in \mathrm{B}(\Gamma, \mathbb{C}^n)$ such that
\begin{align} 
T = \rho^{(n)}(u)
\ \ \mbox{ and } \ \
\lVert T \rVert = \| u + \ker \rho^{(n)} \|.
\end{align}
For $m \in \mathbb{N}$ choose $v_m \in \ker \rho^{(n)}$ such that
\begin{align}
\lVert T \rVert \leq \lVert u+ v_m \rVert < \lVert T \rVert + \frac{1}{m}.
\end{align}
We see that
\begin{align}
\lVert v_m \rVert \leq \lVert u \rVert + \lVert u + v_m \rVert \leq \lVert u \rVert + \lVert T \rVert + 1.
\end{align}
Hence the sequence $(v_m)_{m \in \mathbb{N}}$ is uniformly bounded, and thus so is the induced sequence of cb maps $(\phi_{v_m})_{m \in \mathbb{N}}$ with respect to the cb-norm.
Since the set
\begin{align} 
\mathcal{CB}_{\lVert u \rVert + \lVert T \rVert+ 1}(\mathrm{C}^*(\Gamma), \mathrm{M}_n) := \{ \phi \in \mathrm{B}( \mathrm{C}^*(\Gamma), \mathrm{M}_n) \mid \lVert\phi\rVert_{\mathrm{cb}} \leq \lVert u \rVert + \lVert T \rVert + 1 \}
\end{align}
is compact in the BW-topology, we may take $v_\phi$ induced be a BW-limit $\phi$ of a subnet of $(\phi_{v_m})_{m \in \mathbb{N}}$ (and rename accordingly the net $(v_m)$).
We see that
\begin{align}
\limsup_m \lVert \phi_{u + v_m} \rVert_{\mathrm{cb}} = \limsup_m \lVert u + v_m \rVert = \lVert T \rVert.
\end{align}
Since the maps take values in $\mathrm{M}_n$ it follows that the BW-convergence on this bounded set coincides with the point-norm convergence, and hence
\begin{align}
\phi(\delta_{st^{-1}}) = \lim_m \phi_{v_m}(\delta_{st^{-1}}),
\end{align}
for all $s, t \in \Sigma$.
By the definition of $\rho$ we obtain
\begin{align}
\rho^{(n)}(v_{\phi})
& = (v_{\phi}(st^{-1}))_{s,t \in \Sigma}
= (\phi(\delta_{st^{-1}}))_{s,t \in \Sigma} \\
& = \lim_m (\phi_{v_m}(\delta_{st^{-1}}))_{s,t \in \Sigma}
= \lim_m \rho^{(n)}(v_m)
= 0,
\end{align}
where the limit above is taken in the entrywise-topology, and thus in the norm-topology, of $\mathrm{M}_{\Sigma}(\mathrm{M}_n(\mathbb{C}))$ as $\Sigma$ is finite.
Therefore $v_{\phi} \in \ker \rho^{(n)}$, from which we get 
\begin{align} 
T = \rho^{(n)}(u) = \rho^{(n)}(u+v_{\phi})
\ \ \mbox{ and } \ \
\lVert T \rVert = \| u + \ker \rho^{(n)} \| \leq \lVert u + v_{\phi} \rVert.
\end{align}
On the other hand, for $k \in \mathbb{N}$, $x \in \mathrm{M}_k(\mathrm{C}^{*}(\Gamma))$ and $\xi, \eta \in \mathbb{C}^{kn}$ we have
\begin{align}
| \langle \phi_{u+v_{\phi}}^{(k)}(x) \xi, \eta \rangle|
& = \lim_m | \langle \phi_{u+v_m}^{(k)}(x) \xi, \eta \rangle| \\
& \leq \limsup_m \lVert \phi_{u + v_m} \rVert_{\mathrm{cb}} \cdot \lVert x \rVert \cdot \lVert\xi \rVert \cdot \lVert \eta \rVert \\
& = \lVert T \rVert \cdot \lVert x \rVert \cdot \lVert \xi \rVert \cdot \lVert \eta \rVert.
\end{align}
From this we get
\begin{align}
\lVert u + v_{\phi} \rVert = \lVert \phi_{u+v_{\phi}} \rVert_{\mathrm{cb}} = \sup_{k \in \mathbb{N}} \lVert \phi_{u+v_{\phi}}^{(k)} \rVert \leq \lVert T \rVert,
\end{align}
showing that $\Sigma$ has the cb extension property.
By \autoref{rmk:BW-argument-bounded-operator-extension} it follows that $\Sigma$ has the bounded operator extension property.
\end{proof}

\subsection{The characterisation of the complete Fej\'{e}r--Riesz property}

We next show that the positive/bounded operator extension property is equivalent to the complete Fej\'{e}r--Riesz property.
We refer to \autoref{fig:CP-maps} as an illustration of the operator systems and matrix ordered vector spaces with the canonical maps which are relevant in the proof of the next result. 

\begin{theorem}\label{thm:Characterizations-CPEP}
Let $\Gamma$ be a discrete group and $\Sigma \subseteq \Gamma$ be a finite subset. 
The following statements are equivalent:
\begin{enumerate}
\item[(i)] The set $\Sigma$ has the positive operator extension property.

\item[(ii)] The set $\Sigma$ has the bounded operator extension property. 

\item[(iii)] The set $\Sigma$ has the complete Fej\'{e}r--Riesz property.

\item[(iv)] The Toeplitz system $\mathrm{C}^*(\Gamma)^{(\Sigma)}$ is completely order isomorphic to the dual operator system of the Fourier system $\mathrm{C}^*(\Gamma)_{(\Sigma\Sigma^{-1})}$.
\end{enumerate}
\end{theorem}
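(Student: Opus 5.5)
The plan is to route everything through \autoref{prop:Equiv-embeddings-quotients}, applied to the canonical ucp inclusion $\iota : \mathrm{SOS}(\Sigma) \rightarrow \mathrm{C}^*(\Gamma)$. By the remark following \autoref{def:Admitting-a-Fejer--Riesz-lemma}, (iii) is equivalent to $\iota$ being a complete order embedding. This is condition (i) of \autoref{prop:Equiv-embeddings-quotients}, which is therefore equivalent to its conditions (iii) and (iv). Those say that $\iota^{\mathrm{d}} : \mathrm{C}^*(\Gamma)^{\mathrm{d}} \rightarrow \mathrm{SOS}(\Sigma)^{\mathrm{d}}$ is an $\mathbf{MVS}$-quotient map, respectively an $\mathbf{OSp}$-quotient map.

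The key step is to identify $\iota^{\mathrm{d}}$ with the restriction map $\rho$. Via \autoref{prop:cpmaps} (and its linear span), $\mathrm{C}^*(\Gamma)^{\mathrm{d}} \cong \mathrm{B}(\Gamma)$ as matrix ordered spaces and as operator spaces, the latter by the definition of the norms. Via \autoref{cor:Toeplitz-SOS-dual-op-sys}, $\mathrm{SOS}(\Sigma)^{\mathrm{d}} \cong \mathrm{C}^*(\Gamma)^{(\Sigma)}$ completely order isomorphically. Since $\varphi$ is a unital-up-to-scaling complete order isomorphism between finite-dimensional operator systems, I also need it to be completely isometric for the norm structures used. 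I would check this by noting that $\mathrm{SOS}(\Sigma)^{\mathrm{d}}$ carries the cb-norm, whereas $\mathrm{C}^*(\Gamma)^{(\Sigma)}$ carries the operator norm. For $T\ge0$ one has $\|\phi_T\|_{\mathrm{cb}}=\|\phi_T(\delta_e)\|=\|T_{e,e}\|$, which is not $\|T\|$ in general. This mismatch is the main obstacle.

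I would handle it as follows. First, equivalence of (i) and (iii) uses only orders: by \autoref{prop:Sigma-ext-property-quotient-map}, (i) holds iff $\rho$ is an $\mathbf{MVS}$-quotient, and $\rho = \varphi^{-1}\circ\iota^{\mathrm{d}}$ under the identifications. Composing with a complete order isomorphism preserves being an $\mathbf{MVS}$-quotient, so (i)$\Leftrightarrow$(iii) follows from \autoref{prop:Equiv-embeddings-quotients}. For (ii), I would invoke \autoref{prop:Sigma-ext-property-quotient-map-2}, so that (ii) holds iff $\rho$ is an $\mathbf{OSp}$-quotient map. I would then argue directly that this is equivalent to $\rho$ being an $\mathbf{MVS}$-quotient map. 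The route is to interpret the remark that $\rho$ is the dual of $\iota$ with the operator space structure of $\mathrm{C}^*(\Gamma)^{(\Sigma)}$ matching the operator space dual of $\mathrm{SOS}(\Sigma)$. If that matching fails, the fallback is to use the Paulsen off-diagonal trick: $\|T\|\le1$ iff $\left[\begin{smallmatrix} I & T\\ T^* & I\end{smallmatrix}\right]\ge0$. A norm-preserving extension can then be produced from a positive extension of this $2\times2$ block Toeplitz matrix with $\mathrm{M}_{2n}$ entries, via the off-diagonal corner of the resulting cp map, which has cb-norm at most $1$. Conversely, positivity extensions follow from bounded ones by a standard unital-cc-implies-cp argument applied to the corresponding maps on $\mathrm{SOS}(\Sigma)$.

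Finally, (iv) is the statement that $\mathrm{C}^*(\Gamma)_{(\Sigma\Sigma^{-1})}^{\mathrm{d}}\cong \mathrm{SOS}(\Sigma)^{\mathrm{d}}$, with the isomorphism understood as the canonical one. If (iii) holds, $\mathrm{SOS}(\Sigma)=\mathrm{C}^*(\Gamma)_{(\Sigma\Sigma^{-1})}$ as operator systems, so dualising and applying \autoref{cor:Toeplitz-SOS-dual-op-sys} gives (iv). Conversely, if the canonical map $\mathrm{C}^*(\Gamma)^{(\Sigma)}\rightarrow \mathrm{C}^*(\Gamma)_{(\Sigma\Sigma^{-1})}^{\mathrm{d}}$, $T\mapsto(\delta_{st^{-1}}\mapsto T_{s,t})$, is a complete order isomorphism, then dualising (everything is finite dimensional) shows that the identity $\mathrm{SOS}(\Sigma)\rightarrow\mathrm{C}^*(\Gamma)_{(\Sigma\Sigma^{-1})}$ is a complete order isomorphism, which is (iii). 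I would stress that the isomorphism in (iv) must be the canonical one.
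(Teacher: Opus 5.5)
Your main line is the paper's proof. You apply \autoref{prop:Equiv-embeddings-quotients} to $\iota:\mathrm{SOS}(\Sigma)\to\mathrm{C}^*(\Gamma)$, identify $\iota^{\mathrm{d}}$ with $\rho$ via \autoref{prop:cpmaps} and \autoref{cor:Toeplitz-SOS-dual-op-sys}, and then invoke \autoref{prop:Sigma-ext-property-quotient-map} and \autoref{prop:Sigma-ext-property-quotient-map-2}. The norm mismatch you noticed is real. The paper resolves it exactly as in your first option: in its proof it declares that $\mathrm{C}^*(\Gamma)^{(\Sigma)}$ carries the operator space structure dual to $\mathrm{SOS}(\Sigma)$. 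So $\lVert T\rVert$ in (BE) must be read as $\lVert\phi_T\rVert_{\mathrm{cb}}$; this reading is also what makes $\rho$ completely contractive, which the proof of \autoref{prop:Sigma-ext-property-quotient-map-2} uses.

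Commit to that reading and discard your fallback. With the concrete operator norm of $\mathrm{M}_\Sigma(\mathcal{B}(H))$, the equivalence (i)$\Leftrightarrow$(ii) is actually false. Take $\Gamma=\Sigma=\mathbb{Z}_2$: property (i) holds trivially, yet the all-ones Toeplitz matrix has operator norm $2$, while its unique extension $u\equiv 1$ has $\lVert\phi_u\rVert_{\mathrm{cb}}=1$. Your off-diagonal-corner argument reflects this: it only yields $\lVert\phi_u\rVert_{\mathrm{cb}}\le\lVert T\rVert$, not equality. Likewise, your ``unital cc implies cp'' converse needs $\lVert T\rVert=\lVert T_{s,s}\rVert$ for positive $T$, which holds for the dual norm but not for the operator norm.

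Your insistence that the isomorphism in (iv) be the canonical one is a genuine improvement. The paper's (iv)$\Rightarrow$(iii) passes from an abstract complete order isomorphism to the conclusion that the canonical map $\iota$ is a complete order embedding. That step does not follow from the mere existence of some isomorphism.
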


\begin{proof}
By \autoref{cor:Toeplitz-SOS-dual-op-sys}, the dual map of the canonical inclusion map 
\begin{align} 
\iota : \mathrm{SOS}(\Sigma) \rightarrow \mathrm{C}^*(\Gamma)
\end{align}
is the restriction map 
\begin{align} 
\rho : \mathrm{B}(\Gamma) \rightarrow \mathrm{C}^*(\Gamma)^{(\Sigma)}, u \mapsto (u(st^{-1}))_{s,t\in\Sigma},
\end{align}
where $\mathrm{B}(\Gamma)$ and $\mathrm{C}^*(\Gamma)^{(\Sigma)}$ are at the same time equipped with the structures of the matrix ordered vector space and operator space duals of $\mathrm{C}^*(\Gamma)$ and $\mathrm{SOS}(\Sigma)$.
Recall from \autoref{prop:Sigma-ext-property-quotient-map} and \autoref{prop:Sigma-ext-property-quotient-map-2} that the set $\Sigma$ has the bounded/positive operator extension property if and only if the restriction map $\rho$ is an $\mathbf{MVS}$-/$\mathbf{OSp}$-quotient map.
By \autoref{prop:Equiv-embeddings-quotients}, both of these statements  are equivalent to the statement that the map $\iota$ is a complete order embedding, i.e.\ that $\Sigma$ has the Fej\'{e}r--Riesz property.
This shows the equivalences [(i)$\Leftrightarrow$(ii)$\Leftrightarrow$(iii)].

\smallskip
\noindent
[(iii)$\Leftrightarrow$(iv)]. First note that, by finite-dimensionality, the dual $\mathrm{C}^*(\Gamma)_{(\Sigma\Sigma^{-1})}^\mathrm{d}$ of the Fourier system is an operator system, and by the Fej\'{e}r--Riesz property we have $\mathrm{C}^*(\Gamma)_{(\Sigma\Sigma^{-1})} \cong \mathrm{SOS}(\Sigma)$, which implies the claim by \autoref{cor:Toeplitz-SOS-dual-op-sys}.
Conversely, if the operator systems $\mathrm{C}^*(\Gamma)^{(\Sigma)}$ and $\mathrm{C}^*(\Gamma)_{(\Sigma\Sigma^{-1})}^\mathrm{d}$ are completely order isomorphic, we obtain complete order isomorphisms 
\begin{align}
\mathrm{C}^*(\Gamma)_{(\Sigma\Sigma^{-1})} 
\cong \mathrm{C}^*(\Gamma)_{(\Sigma\Sigma^{-1})}^\mathrm{dd} 
\cong (\mathrm{C}^*(\Gamma)^{(\Sigma)})^\mathrm{d} \cong \mathrm{SOS}(\Sigma),
\end{align}
so the one-to-one ucp map 
\begin{align} 
\iota : \mathrm{SOS}(\Sigma) \rightarrow \mathrm{C}^*(\Gamma)_{(\Sigma\Sigma^{-1})} \subseteq \mathrm{C}^*(\Gamma)
\end{align}
must be a complete order embedding, showing the implication [(iv)$\Rightarrow$(iii)].
\end{proof}

In view of Theorem \ref{thm:Characterizations-CPEP}, we may only refer to the operator extension property for a subset satisfying its equivalent conditions.

\subsection{$(\mathrm{min},\mathrm{max})$-nuclearity}

We close the section with an application to tensor products.
Recall the spatial implementation
\begin{align} 
\mathrm{C}^*(\Gamma)^{(\Sigma)} = P_{\Sigma} \mathrm{C}^*_\mathrm{r}(\Gamma) P_{\Sigma}
\subseteq \mathcal{B}(\ell^2(\Gamma)),
\end{align}
where $P_{\Sigma} : \ell^2(\Gamma) \rightarrow \ell^2(\Sigma)$ is the canonical orthogonal projection.
It follows that if $\Gamma_1$ and $\Gamma_2$ are discrete groups, and $\Sigma_i \subseteq \Gamma_i$ is a finite subset, $i = 1,2$, then
\begin{align} 
\mathrm{C}^*(\Gamma_1 \times \Gamma_2)^{(\Sigma_1 \times \Sigma_2)} 
\cong \mathrm{C}^*(\Gamma_1)^{(\Sigma_1)} \otimes_\mathrm{min} \mathrm{C}^*(\Gamma_2)^{(\Sigma_2)},
\end{align}
where we used that $\mathrm{C}^*_{\mathrm{r}}(\Gamma_1 \times \Gamma_2)$ is canonically *-isomorphic to $\mathrm{C}^*_{\mathrm{r}}(\Gamma_1) \otimes_\mathrm{min} \mathrm{C}^*_{\mathrm{r}}(\Gamma_2)$.
	
\begin{corollary}\label{cor:Min-max-nuclearity}
Let $\Gamma_1$ and $\Gamma_2$ be discrete groups and assume that $\Gamma_1$ or $\Gamma_2$ is amenable.
For $i=1,2$ let $\Sigma_i \subseteq \Gamma_i$ be a finite subset possessing the cp $\Sigma_i$-extension property. 
Then the following statements are equivalent:
\begin{enumerate}
\item[(i)] the subset $\Sigma_1 \times \Sigma_2 \subseteq \Gamma_1 \times \Gamma_2$ possesses the  operator extension property;
    
\item[(ii)] the pair of Toeplitz systems $(\mathrm{C}^*(\Gamma_1)^{(\Sigma_1)},\mathrm{C}^*(\Gamma_2)^{(\Sigma_2)})$ is $(\mathrm{min},\mathrm{max})$-nuclear.
\end{enumerate}
\end{corollary}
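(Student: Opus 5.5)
The plan is to convert both conditions into statements about sums-of-squares systems and Fourier systems, and then compare them by duality. By \autoref{thm:Characterizations-CPEP}, condition (i) says that $\mathrm{SOS}(\Sigma_1\times\Sigma_2)\to \mathrm{C}^*(\Gamma_1\times\Gamma_2)$ is a complete order embedding. The hypothesis on $\Sigma_i$, again via \autoref{thm:Characterizations-CPEP}, gives $\mathrm{SOS}(\Sigma_i)\cong \mathrm{C}^*(\Gamma_i)_{(\Sigma_i\Sigma_i^{-1})}$, a unital complete order embedding into $\mathrm{C}^*(\Gamma_i)$.

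First I will use \autoref{prop:SOS-max-tensor-product} to write
\[
\mathrm{SOS}(\Sigma_1\times\Sigma_2)\cong \mathrm{SOS}(\Sigma_1)\otimes_{\max}\mathrm{SOS}(\Sigma_2).
\]
Next I identify the Fourier system of $\Sigma_1\times\Sigma_2$. Since one of the groups is amenable, $\mathrm{C}^*(\Gamma_1\times\Gamma_2)\cong \mathrm{C}^*(\Gamma_1)\otimes_{\max}\mathrm{C}^*(\Gamma_2)=\mathrm{C}^*(\Gamma_1)\otimes_{\min}\mathrm{C}^*(\Gamma_2)$, using nuclearity of $\mathrm{C}^*(\Gamma_i)$ for amenable $\Gamma_i$. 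The minimal tensor product is injective, so the Fourier system $\mathrm{C}^*(\Gamma_1\times\Gamma_2)_{((\Sigma_1\times\Sigma_2)(\Sigma_1\times\Sigma_2)^{-1})}$ is completely order isomorphic to
\[
\mathrm{C}^*(\Gamma_1)_{(\Sigma_1\Sigma_1^{-1})}\otimes_{\min}\mathrm{C}^*(\Gamma_2)_{(\Sigma_2\Sigma_2^{-1})}\cong \mathrm{SOS}(\Sigma_1)\otimes_{\min}\mathrm{SOS}(\Sigma_2).
\]
Under the identification (\ref{eq_S1S2}), the canonical inclusion from \autoref{thm:Characterizations-CPEP} becomes the identity map
\[
\mathrm{SOS}(\Sigma_1)\otimes_{\max}\mathrm{SOS}(\Sigma_2)\to\mathrm{SOS}(\Sigma_1)\otimes_{\min}\mathrm{SOS}(\Sigma_2).
\]
Hence (i) is equivalent to the pair $(\mathrm{SOS}(\Sigma_1),\mathrm{SOS}(\Sigma_2))$ being $(\min,\max)$-nuclear.

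Finally I dualise. All spaces are finite dimensional, and \autoref{cor:Toeplitz-SOS-dual-op-sys} gives $\mathrm{SOS}(\Sigma_i)^{\mathrm d}\cong \mathrm{C}^*(\Gamma_i)^{(\Sigma_i)}$. The dualities $(X\otimes_{\max}Y)^{\mathrm d}\cong X^{\mathrm d}\otimes_{\min}Y^{\mathrm d}$ and $(X\otimes_{\min}Y)^{\mathrm d}\cong X^{\mathrm d}\otimes_{\max}Y^{\mathrm d}$ then show that the identity $\max\to\min$ on the SOS pair is a complete order isomorphism if and only if its dual, the identity $\min\to\max$ on the Toeplitz pair, is one. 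Both maps are the identity on the same vector space, so one is a complete order isomorphism exactly when the other is. This gives (i)$\Leftrightarrow$(ii).

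The main obstacle is the middle step: I must check that the identification of $\mathrm{C}^*(\Gamma_1\times\Gamma_2)$ with $\mathrm{C}^*(\Gamma_1)\otimes_{\min}\mathrm{C}^*(\Gamma_2)$ carries the Fourier subsystem onto the min tensor product of the Fourier subsystems, with matching cones. This follows from injectivity of $\min$ together with $\mathrm{S}[\Sigma_1\Sigma_1^{-1}\times\Sigma_2\Sigma_2^{-1}]$ equalling the algebraic tensor product. A second point to verify is that the dual units are compatible. Since only the cones matter for $(\min,\max)$-nuclearity of the identity map, the possibly different order units on the duals (the normalised traces) should cause no problem.
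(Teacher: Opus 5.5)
Your proposal is correct and follows essentially the paper's route. Condition (i) becomes $(\mathrm{min},\mathrm{max})$-nuclearity of the pair $(\mathrm{SOS}(\Sigma_1),\mathrm{SOS}(\Sigma_2))$ via \autoref{thm:Characterizations-CPEP}, \autoref{prop:SOS-max-tensor-product} and the amenability/injectivity identification of the Fourier system, and you then dualise with \autoref{cor:Toeplitz-SOS-dual-op-sys} and the min/max duality for finite-dimensional operator systems (one cosmetic slip: the dual of the identity $\mathrm{max}\to\mathrm{min}$ on the SOS pair is the identity $\mathrm{max}\to\mathrm{min}$, not $\mathrm{min}\to\mathrm{max}$, on the Toeplitz pair, which, as you note, does not matter).
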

	
\begin{proof}
By universality and the amenability assumption we have a canonical *-iso\-morphism
\begin{align}\label{eqn:Nuclearity-grp-C*-alg}
\mathrm{C}^*(\Gamma_1 \times \Gamma_2) 
\cong \mathrm{C}^*(\Gamma_1) \otimes_\mathrm{max} \mathrm{C}^*(\Gamma_2) 
\cong \mathrm{C}^*(\Gamma_1) \otimes_\mathrm{min} \mathrm{C}^*(\Gamma_2).
\end{align}
Its restriction induces a complete order isomorphism
\begin{equation}\label{eqn:Nuclearity-grp-C*-alg-2}
\mathrm{C}^*(\Gamma_1 \times \Gamma_2)_{((\Sigma_1 \times \Sigma_2)(\Sigma_1 \times \Sigma_2)^{-1})}
\cong \mathrm{C}^*(\Gamma_1)_{(\Sigma_1 \Sigma_1^{-1})} \otimes_\mathrm{min} \mathrm{C}^*(\Gamma_2)_{(\Sigma_2 \Sigma_2^{-1})}.
\end{equation}

\smallskip
\noindent
[(i)$\Rightarrow$(ii)]. 
Assume that $\Sigma_1 \times \Sigma_2$ has the operator and hence in particular the cp extension property.
Then, by \autoref{thm:Characterizations-CPEP}, the identity map on generators 
\begin{align} 
\iota : \mathrm{SOS}(\Sigma_1 \times \Sigma_2) \rightarrow \mathrm{C}^*(\Gamma_1 \times \Gamma_2)
\end{align}
is a complete order embedding.
By the assumption that $\Sigma_i$ has the operator, and hence the cp, extension property it follows from \autoref{thm:Characterizations-CPEP} that $\Sigma_i$ has the complete Fej\'{e}r--Riesz property, i.e.\ $\mathrm{C}^*(\Gamma_i)_{(\Sigma_i\Sigma_i^{-1})}$ is completely order isomorphic to $\mathrm{SOS}(\Sigma_i)$, for $i=1,2$.
By \autoref{prop:SOS-max-tensor-product} and by (\ref{eqn:Nuclearity-grp-C*-alg-2}), we obtain 
\begin{align}
\mathrm{SOS}(\Sigma_1) \otimes_\mathrm{max} \mathrm{SOS}(\Sigma_2) 
&\cong \mathrm{SOS}(\Sigma_1 \times \Sigma_2) \\
&\cong \mathrm{C}^*(\Gamma_1 \times \Gamma_2)_{((\Sigma_1 \times \Sigma_2)(\Sigma_1 \times \Sigma_2)^{-1})} \\
&\cong \mathrm{C}^*(\Gamma_1)_{(\Sigma_1\Sigma_1^{-1})} \otimes_\mathrm{min} \mathrm{C}^*(\Gamma_2)_{(\Sigma_2\Sigma_2^{-1})} \\
&\cong \mathrm{SOS}(\Sigma_1) \otimes_\mathrm{min} \mathrm{SOS}(\Sigma_2),
\end{align}
that is, we have that the pair $(\mathrm{SOS}(\Sigma_1),\mathrm{SOS}(\Sigma_2))$ is
$(\mathrm{min},\mathrm{max})$-nuclear. 
Duality of the Toeplitz and sums of squares systems (\autoref{prop:PD-functions-Toeplitz-CP-maps-SOS}) together with the fact that $(X \otimes_\mathrm{min} Y)^\mathrm{d} \cong X^\mathrm{d} \otimes_\mathrm{max} Y^\mathrm{d}$ and $(X \otimes_\mathrm{max} Y)^\mathrm{d} \cong X^\mathrm{d} \otimes_\mathrm{min} Y^\mathrm{d}$, for finite-dimensional operator systems $X$ and $Y$ (see \cite{FP12}), implies $(\mathrm{min},\mathrm{max})$-nuclearity of the pair of Toeplitz systems $(\mathrm{C}^*(\Gamma_1)^{(\Sigma_1)},\mathrm{C}^*(\Gamma_2)^{(\Sigma_2)})$.

\smallskip
\noindent
[(ii)$\Rightarrow$(i)].
Assume that 
\begin{align} 
\mathrm{C}^*(\Gamma_1)^{(\Sigma_1)} \otimes_\mathrm{max} \mathrm{C}^*(\Gamma_2)^{(\Sigma_2)} \cong \mathrm{C}^*(\Gamma_1)^{(\Sigma_1)} \otimes_\mathrm{min} \mathrm{C}^*(\Gamma_2)^{(\Sigma_2)}.
\end{align}
Since $\Sigma_1$ and $\Sigma_2$ have the complete Fej\'{e}r--Riesz property and the minimal operator system tensor product is injective, we have that the tensor product identity map on generators 
\begin{align} 
\iota_1 \otimes \iota_2 : \mathrm{SOS}(\Sigma_1) \otimes_\mathrm{min} \mathrm{SOS}(\Sigma_2) \rightarrow \mathrm{C}^*(\Gamma_1) \otimes_\mathrm{min} \mathrm{C}^*(\Gamma_2)
\end{align}
is a complete order embedding.
By \autoref{cor:Toeplitz-SOS-dual-op-sys}, the assumption that the pair $(\mathrm{C}^*(\Gamma_1)^{(\Sigma_1)},\mathrm{C}^*(\Gamma_2)^{(\Sigma_2)})$ is $(\mathrm{min},\mathrm{max})$-nuclear,  
and the duality of the minimal and maximal operator system tensor product for finite-dimensional operator systems, we have 
\begin{align}
\mathrm{SOS}(\Sigma_1 \times \Sigma_2)
&\cong \mathrm{SOS}(\Sigma_1) \otimes_\mathrm{max} \mathrm{SOS}(\Sigma_2) \\
&\cong \mathrm{SOS}(\Sigma_1) \otimes_\mathrm{min} \mathrm{SOS}(\Sigma_2) \\
&\cong \mathrm{C}^*(\Gamma_1)_{(\Sigma_1 \Sigma_1{^-1})} \otimes_\mathrm{min} \mathrm{C}^*(\Gamma_2)_{(\Sigma_2 \Sigma_2^{-1})} \\
&\cong \mathrm{C}^*(\Gamma_1 \times \Gamma_2)_{((\Sigma_1 \times \Sigma_2)(\Sigma_1 \times \Sigma_2)^{-1})}.
\end{align}
By \autoref{thm:Characterizations-CPEP}, the set $\Sigma_1 \times \Sigma_2$ possesses the complete Fej\'{e}r--Riesz property, whence the operator extension property.
\end{proof}

A scalar-valued version of \autoref{thm:Characterizations-CPEP} was shown by Rudin \cite{Rud63} and (according to \cite{Sak80}) independently by Calderón--Pepinsky \cite{CP52}. 
From a result of Hilbert's \cite{Hil88}, they then inferred that there is a positive semi-definite trigonometric polynomial in two variables of respective degree $3$ which cannot be expressed as a sum of squares.
Equivalently, the subset $\{0,1,2,3\}^2 \subseteq \mathbb{Z}^2$ does not have the (scalar-valued) extension property.
This result was strengthened in \cite{Sak80} to the subset $\{0,1,2\}^2 \subseteq \mathbb{Z}^2$, which yields the following special case of \cite[Theorem 6.5]{Far21}.

\begin{corollary}\label{cor:Nonnuclearity-Toeplitz-system}
Let $n \geq 3$ and $\mathrm{T}_n \subseteq \mathrm{M}_n$ be the operator system of $n \times n$ Toeplitz matrices.
Then 
\begin{align}
\mathrm{T}_n \otimes_\mathrm{min} \mathrm{T}_n \ncong \mathrm{T}_n \otimes_\mathrm{max} \mathrm{T}_n.
\end{align}
\end{corollary}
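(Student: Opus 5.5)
We would deduce the statement from \autoref{cor:Min-max-nuclearity}, applied with $\Gamma_1 = \Gamma_2 = \mathbb{Z}$ (amenable) and $\Sigma_1 = \Sigma_2 = \Sigma_{n-1} := \{0,1,\dots,n-1\}$. The first step is to identify $\mathrm{C}^*(\mathbb{Z})^{(\Sigma_{n-1})}$ with $\mathrm{T}_n$. Both are the operator subsystem of $\mathrm{M}_n$ consisting of matrices $(T_{i,j})$ whose entries depend only on $i-j$, with $i-j$ ranging over $\{-(n-1),\dots,n-1\}$. Next, by \autoref{ex:Fejer--Riesz-circle}, $\Sigma_{n-1}$ has the complete Fej\'{e}r--Riesz property. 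By \autoref{thm:Characterizations-CPEP} it therefore has the operator extension property, and in particular the cp extension property. So the hypotheses of \autoref{cor:Min-max-nuclearity} are satisfied.

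By that corollary, $\mathrm{T}_n \otimes_\mathrm{min} \mathrm{T}_n \cong \mathrm{T}_n \otimes_\mathrm{max} \mathrm{T}_n$ canonically if and only if $\Sigma_{n-1}^2 = \{0,\dots,n-1\}^2 \subseteq \mathbb{Z}^2$ has the operator extension property. By \autoref{thm:Characterizations-CPEP} again, this is equivalent to $\{0,\dots,n-1\}^2$ having the complete Fej\'{e}r--Riesz property.

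It then remains to show that $\{0,\dots,n-1\}^2$ fails the complete Fej\'{e}r--Riesz property when $n \geq 3$. For $n=3$ this is $\{0,1,2\}^2$, which fails even the scalar version by \cite{Sak80}; see \autoref{ex:Counterexample-Fejer--Riesz}. For larger $n$ we use monotonicity. Suppose $\{0,1,2\}^2 \subseteq \Sigma' := \{0,\dots,n-1\}^2$ and that $\Sigma'$ had the Fej\'{e}r--Riesz property. Take a positive trigonometric polynomial $x$ stretching over $\{0,1,2\}^2$; it also stretches over $\Sigma'$, so it would be a sum of squares of polynomials supported on $\Sigma'$. A standard Newton polytope (support) argument for $\mathbb{Z}^2$ shows that each square must then be supported on $\{0,1,2\}^2$, contradicting \cite{Sak80}. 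The simpler route, which we prefer, is to cite \cite{Rud63,Sak80} directly for every $N \geq 2$, as in \autoref{ex:Counterexample-Fejer--Riesz}.

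Hence $\{0,\dots,n-1\}^2$ lacks the operator extension property, and so the pair $(\mathrm{T}_n,\mathrm{T}_n)$ is not $(\mathrm{min},\mathrm{max})$-nuclear.

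The main obstacle is the distinction between a canonical isomorphism and an abstract one. The corollary only rules out the canonical identification, while the statement writes $\ncong$. We read $\ncong$ as "not canonically completely order isomorphic", which matches the definition of $(\sigma,\tau)$-nuclearity. If an abstract isomorphism is also to be excluded, a dimension or $\mathrm{C}^*$-envelope invariant would be needed, and we would not pursue that here.
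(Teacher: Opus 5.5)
Your main line is the paper's own proof. The paper likewise cites \cite{Rud63,Sak80} directly for the failure of the (scalar, hence complete) Fej\'{e}r--Riesz property of $\{0,\dots,n-1\}^2$ and then combines \autoref{thm:Characterizations-CPEP} with \autoref{cor:Min-max-nuclearity}. Your other steps are fine: the identification $\mathrm{C}^*(\mathbb{Z})^{(\{0,\dots,n-1\})}=\mathrm{T}_n$, the check of that corollary's hypotheses, and reading $\ncong$ as failure of the canonical identification. That last reading is also all that the paper's argument actually delivers.

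You should, however, delete the ``monotonicity'' aside, because its Newton polytope step is false for hermitian squares. The extreme coefficient of $y_iy_i^*$ in a given direction is a product $a\bar b$ of two different coefficients of $y_i$, not a square, so these terms can cancel across $i$. Already in $\mathrm{C}^*(\mathbb{Z})$ one has $(\delta_0+\delta_2)(\delta_0+\delta_2)^*+(\delta_0-\delta_2)(\delta_0-\delta_2)^*=4\delta_0$, whose factors are not supported on any translate of $\{0\}$.

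Worse, the conclusion you want from that step is unavailable in principle. Both cones are closed and $\delta_e$ is interior, so a witness $x$ for the failure at $\{0,1,2\}^2$ can be taken strictly positive on $\mathbb{T}^2$. By Dritschel's factorisation theorem, such an $x$ is a sum of hermitian squares of polynomials supported on some larger box. So the same witness does not obstruct larger boxes, and each $n\geq 3$ needs its own witness. That is exactly why the direct citation of \cite{Rud63,Sak80}, as in \autoref{ex:Counterexample-Fejer--Riesz}, is the right way to finish.
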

	
\begin{proof}
By \cite{Rud63,Sak80}, for $\Sigma_n = \{0,\dots,n-1\}^2 \subseteq \mathbb{Z}^2$, the cone $\mathcal{Q}_1(\Sigma_n)$ is \emph{strictly} contained in the cone $\mathrm{C}^*(\mathbb{Z}^2)^+ \cap \mathrm{S}[\Sigma_n\Sigma_n^{-1}]$, i.e.\@ the set $\Sigma_n$ does not have the Fej\'{e}r--Riesz property.
The claim now follows from \autoref{thm:Characterizations-CPEP} and \autoref{cor:Min-max-nuclearity}.
\end{proof}

\section{Positive semi-definite functions on a positivity domain} \label{sec:Psd-fct-pos-domain}
	
In this section we discuss positive semi-definite operator-valued functions defined on positivity domains, addressing the question of when they admit  extensions to positive semi-definite functions defined on the whole group.
We thus drop the assumption that the domain of the given function has the form $\Sigma\Sigma^{-1}$ for some finite set, allowing simultaneously infinite domains. 
Our analysis follows a similar path as in the previous section by first identifying the operator system predual of the matrix ordered vector space of \black positive semi-definite functions;
this operator system can be constructed as a colimit from sums-of-squares operator systems.
In the case of the positivity domain consisting of five points in $\mathbb{Z}^2$ as in \autoref{fig:Positivity-domain-5pts} this allows us to realise the associated operator system as a coproduct, giving us access to its maximal and minimal $\mathrm{C}^*$-cover and allowing us to infer that there must be positive semi-definite operator-valued functions on this positivity domain which do not admit positive semi-definite extensions to the whole group $\mathbb{Z}^2$.

\subsection{The operator system $\mathrm{UOS}(\Delta)$ and the complete factorisation property}
	
Let $\Gamma$ be a discrete group and $\Delta \subseteq \Gamma$ be a positivity domain, i.e. a subset such that $\Delta^{-1} = \Delta$ and $e\in \Delta$.
We begin by defining the positive matricial cones which we will use to define an operator system structure on the $^*$-vector space 
\begin{align}
\mathrm{S}[\Delta] := \mathrm{span}\{\delta_s \mid s \in \Delta\}; \quad \delta_s^* := \delta_{s^{-1}}.
\end{align}
For a $^*$-vector space $V$ and a subset $\mathcal{P} \subseteq V_\mathrm{h}$, we denote by $\mathrm{cone}(\mathcal{P})$ the \emph{cone generated} by $\mathcal{P}$; thus, $\mathrm{cone}(\mathcal{P}) := [0,\infty) \cdot \mathcal{P} + \mathcal{P}$.
For $n\in \bb{N}$, set 
\begin{align}\label{eqn:E-cone-union}
\mathcal{U}_n(\Delta) 
:= \mathrm{cone} \left( \bigcup \big\{ \mathcal{Q}_n(\Sigma) \mid \Sigma \subseteq \Gamma, \Sigma\Sigma^{-1} \subseteq \Delta, \lvert\Sigma\rvert < \infty \big\} \right)
\subseteq \mathrm{M}_n(\mathrm{S}[\Delta])_\mathrm{h}.
\end{align}
Note that, since $\mathcal{Q}_n(\Sigma)$ is a cone for any $\Sigma \subseteq \Gamma$, in order to form $\mathcal{U}_n(\Delta)$ it suffices to take the convex hull of the union of the cones $\mathcal{Q}_n(\Sigma)$.
Since the cone generated by a union of cones is their sum, we have that 
\begin{align}\label{eqn:E-cone-sum}
\mathcal{U}_n(\Delta)
= \sum \big\{ \mathcal{Q}_n(\Sigma) \mid \Sigma \subseteq \Gamma, \Sigma\Sigma^{-1} \subseteq \Delta, \lvert\Sigma\rvert < \infty \big\}.
\end{align}

\begin{remark}\label{rem:Delta-cones-MOU}
It is straightforward to check that $(\mathcal{U}_n(\Delta))_{n \in \mathbb{N}}$ is a compatible family of convex proper cones, each respectively contained in $\mathrm{M}_n(\mathrm{C}^*(\Gamma))^+$, $n\in \bb{N}$.
It follows from \autoref{lem:SOS-cones-MOU} that, in fact, $(\mathcal{U}_n(\Delta))_{n \in \mathbb{N}}$ is the smallest such family whose $n$-th level contains the sets 
\begin{align}
\left\{(\delta_{st^{-1}})_{s,t\in\Sigma} \mid \Sigma \subseteq \Gamma, \Sigma\Sigma^{-1} \subseteq \Delta, \lvert\Sigma\rvert < \infty\right\}.
\end{align}
The element $\delta_e$ is a matrix order unit for the family $(\mathcal{U}_n(\Delta))_{n \in \mathbb{N}}$.
In particular, the triple $(\mathrm{S}[\Delta], (\mathcal{U}_n(\Delta))_{n\in\mathbb{N}}, \delta_e)$ is a matrix order unit space and the canonical inclusion into $\mathrm{C}^*(\Gamma)$ is ucp.
\end{remark}

We write $\mathrm{UOS}_0(\Delta)$ for the matrix order unit space $(\mathrm{S}[\Delta],(\mathcal{U}_n(\Delta))_{n \in \mathbb{N}},\delta_e)$ and $\mathrm{UOS}(\Delta) := \mathrm{Arch}(\mathrm{UOS}_0(\Delta))$ for its Archimedeanisation.
Thus $\mathrm{UOS}(\Delta)$ is an operator system;
its notation refers to its matricial cones being generated by unions of sums-of-squares cones.
In the next proposition, we point out that 
when $\Delta$ is finite, under an additional technical assumption, the matrix ordered operator space $\mathrm{UOS}_0(\Delta)$ is in fact already an operator system (without Archimedeanizing).

\begin{proposition}\label{prop:Finite-dim-P-Delta-quotient}
Let $\Gamma$ be a discrete group, $\Delta \subseteq \Gamma$ be a finite positivity domain and let $\{\Sigma_i\}_{i=1}^n$ be a family of subsets of $\Gamma$ such that $\bigcup_{i=1}^n \Sigma_i\Sigma_i^{-1} = \Delta$.
Assume that for every finite set $\Sigma \subseteq \Gamma$ with $\Sigma\Sigma^{-1} \subseteq \Delta$ there exists $i\in \{1,\dots,n\}$ such that $\mathcal{Q}_k(\Sigma) \subseteq \mathcal{Q}_k(\Sigma_i)$, for all $k\in\mathbb{N}$.
Then $\mathrm{UOS}_0(\Delta) = \mathrm{UOS}(\Delta)$.
\end{proposition}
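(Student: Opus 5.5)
The plan is to show that, under the hypothesis, each cone $\mathcal{U}_k(\Delta)$ is a finite sum of the closed cones $\mathcal{Q}_k(\Sigma_i)$, and that such a finite sum is closed in the finite-dimensional space $\mathrm{M}_k(\mathrm{S}[\Delta])$. Then $\delta_e$ is an Archimedean matrix order unit, so the Archimedeanisation changes nothing.

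\emph{Step 1: reduce $\mathcal{U}_k(\Delta)$ to $\Sigma_1,\dots,\Sigma_n$.} By (\ref{eqn:E-cone-sum}), $\mathcal{U}_k(\Delta)$ is the sum of all $\mathcal{Q}_k(\Sigma)$ with $\Sigma$ finite and $\Sigma\Sigma^{-1}\subseteq\Delta$. The hypothesis puts every such $\mathcal{Q}_k(\Sigma)$ inside some $\mathcal{Q}_k(\Sigma_i)$. Each $\Sigma_i$ is itself admissible: $\Sigma_i\Sigma_i^{-1}\subseteq\Delta$, and $\Sigma_i$ is finite because $\Sigma_i\Sigma_i^{-1}\supseteq \Sigma_i t^{-1}$ for any fixed $t\in\Sigma_i$ and $\Delta$ is finite. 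Hence
\begin{align}
\mathcal{U}_k(\Delta)=\mathcal{Q}_k(\Sigma_1)+\dots+\mathcal{Q}_k(\Sigma_n).
\end{align}

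\emph{Step 2: closedness of the finite sum.} This is the main obstacle, since a sum of closed cones need not be closed in general. I would imitate the compactness argument of \autoref{prop:Finite-SOS-archimedean}. That proof shows every element of $\mathcal{Q}_k(\Sigma_i)$ is a sum of at most $N:=k^2|\Delta|$ squares $yy^*$ with $y$ supported on $\Sigma_i$.

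Take a sequence $x^m=\sum_{i=1}^n x_i^m\to x$ with $x_i^m\in\mathcal{Q}_k(\Sigma_i)$. To bound the summands, map everything into $\mathrm{M}_k(\mathrm{C}^*(\Gamma))$, where all $\mathcal{Q}_k(\Sigma_i)$ are positive by \autoref{lem:SOS-cones-MOU}(i). There $0\le x_i^m\le x^m$, so $\|x_i^m\|\le\|x^m\|$ is bounded. Writing $x_i^m=\sum_{j\le N} y_{ij}^m(y_{ij}^m)^*$ gives $\|y_{ij}^m\|^2\le\|x_i^m\|$, so all $y_{ij}^m$ lie in a bounded subset of the finite-dimensional space $\mathrm{M}_k\otimes\mathrm{S}[\Sigma_i]$. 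On this space the $\mathrm{C}^*(\Gamma)$-norm is a genuine norm, since the $\delta_s$ are linearly independent (use the left regular representation), so boundedness in it means boundedness in any norm.

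Passing to a common convergent subsequence, $y_{ij}^m\to y_{ij}$, and by continuity of multiplication
\begin{align}
x=\sum_{i=1}^n\sum_{j=1}^N y_{ij}y_{ij}^*\in\mathcal{Q}_k(\Sigma_1)+\dots+\mathcal{Q}_k(\Sigma_n)=\mathcal{U}_k(\Delta).
\end{align}
So $\mathcal{U}_k(\Delta)$ is norm-closed in $\mathrm{M}_k(\mathrm{S}[\Delta])$.

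\emph{Step 3: conclude.} The cones are proper because they sit inside $\mathrm{M}_k(\mathrm{C}^*(\Gamma))^+$ and the inclusion is injective; hence the kernel $N$ in the Archimedeanisation is $\{0\}$. As in \autoref{prop:Finite-SOS-archimedean}, in finite dimensions the order topology coincides with the norm topology, so each $\mathcal{U}_k(\Delta)$ is closed in the order topology. By \cite[Theorem 2.30, Remark 3.4]{PT09}, $(\delta_e)_k$ is then an Archimedean order unit for every $k$. Therefore $\mathrm{Arch}(\mathcal{U}_k(\Delta))=\mathcal{U}_k(\Delta)$ for all $k$, which gives $\mathrm{UOS}_0(\Delta)=\mathrm{UOS}(\Delta)$.
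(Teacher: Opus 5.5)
Your proof is correct and follows the paper's route: you reduce $\mathcal{U}_k(\Delta)$ to $\mathcal{Q}_k(\Sigma_1)+\dots+\mathcal{Q}_k(\Sigma_n)$, show this sum is closed, and conclude that $\delta_e$ is Archimedean, so the Archimedeanisation changes nothing. The paper only asserts that a finite sum of the closed cones $\mathcal{Q}_k(\Sigma_i)$ is closed, which is not automatic in general; your bound $\|x_i^m\|\le\|x^m\|$, which holds because every summand lies in the common proper cone $\mathrm{M}_k(\mathrm{C}^*(\Gamma))^+$, is exactly the justification that step needs (you could also extract convergent subsequences of the $x_i^m$ directly and use that each $\mathcal{Q}_k(\Sigma_i)$ is closed).
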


\begin{proof}
The assumptions on the family $\{\Sigma_i\}_{i=1}^n$ ensure that we have
\begin{align}
\mathcal{U}_k(\Delta) = \mathrm{cone} \left( \bigcup_{i=1}^n \mathcal{Q}_k(\Sigma_i) \right),
\end{align}
for all $k \in \mathbb{N}$.
Since each cone $\mathcal{Q}_k(\Sigma_i)$ is closed by \autoref{prop:Finite-SOS-archimedean}, so is $\mathcal{U}_k(\Delta)$, whence $\mathrm{UOS}_0(\Delta)$ is an operator system and we have $\mathrm{UOS}_0(\Delta) = \mathrm{UOS}(\Delta)$.
\end{proof}

\begin{remark}
Note that the assumption in \autoref{prop:Finite-dim-P-Delta-quotient} ensures in particular that each $\Sigma_i$ is maximal in the sense that $(\Sigma_i \cup \gamma) (\Sigma_i \cup \gamma)^{-1} \nsubseteq \Delta$ for any $\gamma \in \Gamma \setminus \Sigma$.
\end{remark}
	
Before we move to the analysis of the operator systems of the form $\mathrm{UOS}(\Delta)$, we provide a motivating example; 
although this example may be known to the experts, we provide  sufficient operator-algebraic details in order to depict the differences with the sums-of-squares operator systems. 

\begin{example}\label{ex:5-pt-pos-dom-Z2}
Consider the positivity domain 
\begin{align}\label{eqn:Positivity-domain-5pts}
\Delta := \{(0,0), (0,\pm 1), (\pm 1,0)\} \subseteq \mathbb{Z}^2,
\end{align}
as in \autoref{fig:Positivity-domain-5pts}.
For notational convenience, set 
\begin{align}
\Sigma_\mathrm{h} := \{(0,0), (1,0)\}, 
\ &
\mathrm{SOS}_\mathrm{h} := \mathrm{SOS}(\Sigma_\mathrm{h}), \\
\Sigma_\mathrm{v} := \{(0,0), (0,1)\}, 
\ &
\mathrm{SOS}_\mathrm{v} := \mathrm{SOS}(\Sigma_\mathrm{v}). \end{align}
Our aim is to show that
\begin{align} 
\mathrm{UOS}(\Delta) 
= \mathrm{SOS}_\mathrm{h} \oplus_\mathbf{1} \mathrm{SOS}_\mathrm{v}.
\end{align}

\begin{figure}[h]
	\centering
	\begin{tikzpicture}[scale=1]
		\foreach \x in {-2,...,2} {
			\foreach \y in {-2,...,2} {
				\filldraw[black] (\x,\y) circle (.5pt);
			}
		}
				
		\filldraw[black] (-1,0) circle (2pt);
		\filldraw[black] (0,-1) circle (2pt);
		\filldraw[black] (0,0) circle (2pt);
		\filldraw[black] (1,0) circle (2pt);
		\filldraw[black] (0,1) circle (2pt);
    	\end{tikzpicture}
		\caption{The positivity domain $\{(0,0),\allowbreak (0,\pm 1), (\pm 1,0)\} \subseteq \mathbb{Z}^2$.}
		\label{fig:Positivity-domain-5pts}
\end{figure}
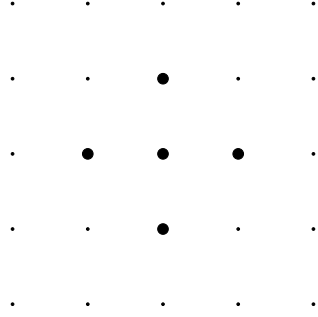
		
We first show that if $\Sigma \subseteq \Gamma$ with $|\Sigma|< \infty$ and $\Sigma - \Sigma \subseteq \Delta$, then $|\Sigma| \leq 2$.
Indeed, if there are distinct $s_1, s_2, s_3 \in \Sigma$ with their pairwise differences in $\Delta$, then these differences have to be distinct elements in $\Delta$ and not $(0,0)$.
Set $e_1 := (1,0), e_2:=(0,1) \in \mathbb{Z}^2$.
Without loss of generality assume that $s_1 = s_2 + e_1$.
If $s_1 = s_3 + e_2$, or if $s_1 = s_3 - e_2$, then we get
\begin{align}
e_1 \pm e_2 = (s_1 - s_2) + (s_3 - s_1) \in \Sigma - \Sigma \subseteq \Delta,
\end{align}
which is a contradiction.
If $s_2 = s_3 + e_2$, or if $s_2 = s_3 - e_2$, then we get
\begin{align}
e_1 \mp e_2 = (s_1 - s_2) + (s_2 - s_3) \in \Sigma - \Sigma \subseteq \Delta,
\end{align}
which is again a contradiction.
Hence $\Sigma = \{s\}$, or $\Sigma = \{s_1, s_2\}$.

If $\Sigma = \{s_1, s_2\}$, the difference $s_1 - s_2 \in \{\pm e_1, \pm e_2\}$ implies that either $\Sigma \subseteq \Sigma_{\mathrm{h}} + s$ for some $s \in \mathbb{Z}^2$, or $\Sigma \subseteq \Sigma_{\mathrm{v}} + s$ for some $s \in \mathbb{Z}^2$.
These inclusions trivially hold for $\Sigma = \{s\}$.
Hence
\begin{align} 
\mathcal{Q}_n(\Sigma) \subseteq \mathcal{Q}_n(\Sigma_{\mathrm{h}}) \cup \mathcal{Q}_n(\Sigma_{\mathrm{v}}), \ \ \ n \in \mathbb{N},
\end{align}
for every $\Sigma \subseteq \Gamma$ with $|\Sigma|< \infty$ and $\Sigma - \Sigma \subseteq \Delta$.
We claim that
\begin{equation}\label{eq_clUclQ}
\mathcal{U}_n(\Delta) = \mathcal{Q}_n(\Sigma_\mathrm{h}) + \mathcal{Q}_n(\Sigma_\mathrm{v}), \ \ \ n \in \mathbb{N}.
\end{equation}
Indeed, for $\Sigma = \Sigma_{\mathrm{h}} + s$ and $A,B \in \mathrm{M}_{n,k}$ we have that
\begin{align}
(A \otimes \delta_s + B \otimes \delta_{e_1 + s}) (A \otimes \delta_s + B \otimes \delta_{e_1 + s})^*
& = \\
& \hspace{-4cm} =
(A \otimes \delta_{e} + B \otimes \delta_{e_1}) (I \otimes \delta_s)(I \otimes \delta_s)^* (A \otimes \delta_{e} + B \otimes \delta_{e_1})^*, \\
& \hspace{-4cm} =
(A \otimes \delta_{e} + B \otimes \delta_{e_1}) (A \otimes \delta_{e} + B \otimes \delta_{e_1})^*,
\end{align}
and \autoref{lem:SOS-cones-positive-matrix-version} gives that $\mathcal{Q}_n(\Sigma) = \mathcal{Q}_n(\Sigma_{\mathrm{h}})$.
Likewise, if $\Sigma = \Sigma_{\mathrm{v}} + s$ then we get $\mathcal{Q}_n(\Sigma) = \mathcal{Q}_n(\Sigma_{\mathrm{v}})$.

By Proposition \ref{prop:Finite-dim-P-Delta-quotient} below, we have $\mathrm{UOS}(\Delta) = \mathrm{UOS}_0(\Delta)$; alternatively, we have that each $\mathcal{U}_n(\Delta) = \mathcal{Q}_n(\Sigma_\mathrm{h}) + \mathcal{Q}_n(\Sigma_\mathrm{v})$ is a closed cone and hence $\delta_e$ is an Archimedean matrix order unit for $\mathrm{UOS}_0(\Delta)$.
We further claim that the canonical ucp inclusion maps 
\begin{align} 
\phi_\mathrm{h} : \mathrm{SOS}_\mathrm{h} \rightarrow \mathrm{UOS}(\Delta) 
\ \ \mbox{ and } \ \ 
\phi_\mathrm{v} : \mathrm{SOS}_\mathrm{v} \rightarrow \mathrm{UOS}(\Delta)
\end{align}
are complete order embeddings.
Indeed, we have 
\begin{align} 
\mathrm{M}_n(\mathrm{S}[\{(-1,0),(0,0),(1,0)\}]) \cap \mathcal{U}_n(\Delta) = \mathcal{Q}_n(\Sigma_\mathrm{h}),
\end{align}
so $\phi_\mathrm{h}$ is a complete order embedding; analogous arguments apply for $\phi_\mathrm{v}$.
		
We now show that the operator system $\mathrm{UOS}(\Delta)$ is the direct sum of two copies of the Fourier system $\mathrm{C}^*(\mathbb{Z})_{(\{-1,0,1\})} \subseteq \mathrm{C}^*(\mathbb{Z})$ amalgamated over the unit, i.e.
\begin{align}\label{eqn:P-Delta-amalgamated-direct-sum}
\mathrm{UOS}(\Delta) 
= \mathrm{C}^*(\mathbb{Z})_{(\{-1,0,1\})} \oplus_\mathbf{1} \mathrm{C}^*(\mathbb{Z})_{(\{-1,0,1\})};
\end{align}
in fact, we have 
\begin{align} 
\mathrm{UOS}(\Delta) 
= \mathrm{SOS}_\mathrm{h} \oplus_\mathbf{1} \mathrm{SOS}_\mathrm{v},
\end{align}
which we show by checking that $\mathrm{UOS}(\Delta)$ satisfies the universal property of the amalgamated direct sum.
To this end, assume that $X$ is an operator system, and let
\begin{align}
\psi_\mathrm{h} : \mathrm{SOS}_\mathrm{h} \rightarrow X
\ \ \mbox{ and } \ \
\psi_\mathrm{v} : \mathrm{SOS}_\mathrm{v} \rightarrow X
\end{align}
be ucp maps.
Define a map $\psi : \mathrm{UOS}(\Delta) \rightarrow X$ by 
\begin{align} 
\psi(\delta_{(0,0)}) := e_X, \ \
\psi(\delta_{(\pm 1,0)}) := \psi_\mathrm{h}(\delta_{\pm 1})
\ \ \mbox{ and } \ \
\psi(\delta_{(0,\pm 1)}) := \psi_\mathrm{v}(\delta_{\pm 1}).
\end{align}
Clearly, $\psi$ is the unique map such that $\psi \circ \phi_\mathrm{v} = \psi_\mathrm{v}$ and $\psi \circ \phi_\mathrm{h} = \psi_\mathrm{h}$.
If $x \in \mathcal{U}_n(\Delta)$, by (\ref{eq_clUclQ}), there exist $x_\mathrm{h} \in \mathcal{Q}_n(\Sigma_\mathrm{h})$, $x_\mathrm{v} \in \mathcal{Q}_n(\Sigma_\mathrm{v})$ such that $x = x_\mathrm{h} + x_\mathrm{v}$.
Then 
\begin{align} 
\psi^{(n)}(x) = \psi_\mathrm{h}^{(n)}(x_\mathrm{h}) + \psi_\mathrm{v}^{(n)}(x_\mathrm{v}) \in \mathrm{M}_n(X)^+,
\end{align}
so $\psi$ is ucp; this completes the proof that $\mathrm{UOS}(\Delta)$ has the universal property of the coproduct.
By the operator-valued Fej\'{e}r--Riesz lemma, we have 
\begin{align} 
\mathrm{SOS}_\mathrm{h} \cong \mathrm{SOS}_\mathrm{v} \cong \mathrm{C}^*(\mathbb{Z})_{(\{-1,0,1\})},
\end{align}
which shows (\ref{eqn:P-Delta-amalgamated-direct-sum}).
\end{example}

Realizing the operator system $\mathrm{UOS}(\Delta)$ as a coproduct as in (\ref{eqn:P-Delta-amalgamated-direct-sum}) allows us to compute its maximal and minimal $\mathrm{C}^*$-covers.
Recall that the universal unital $\mathrm{C}^*$-algebra $\mathrm{C}^*_\mathrm{u}(\mathrm{con})$ of a contraction is defined as the  universal unital C*-algebra generated by a unit $\mathbf{1}$ and a contraction $x \neq \mathbf{1}$ such that for any contraction $y \in \mathcal{B}(H)$ there is a canonical unital $^*$-homomorphism
\begin{align}
\Phi \colon \mathrm{C}^*(\mathbf{1}, x) \to \mathrm{C}^*(\mathbf{1}_{\mathcal{B}(H)}, y), \ \
x \mapsto y.
\end{align}
Proposition 5.1 from \cite{FKPT14} implies that 
\begin{align}\label{L:univ}
\mathrm{C}^*_{\max}\left(\mathrm{C}^*(\mathbb{Z})_{(\{-1,0,1\})}\right) = \mathrm{C}^*_\mathrm{u}(\mathrm{con}),
\end{align}
up to a canonical *-isomorphism. 

The universal C*-alebra $\mathrm{C}^*_\mathrm{u}(\mathrm{con}_1, \dots, \mathrm{con}_m)$ of $m$ contractions is defined in an analogous way.
By universality we have a canonical *-isomorphism with the $m$-times free product of $\mathrm{C}^*_\mathrm{u}(\mathrm{con})$ with itself over the unit.

We are now in position to compute the maximal and minimal $\mathrm{C}^*$-cover of the operator system $\mathrm{UOS}(\Delta)$ for the positivity domain $\Delta$ from \autoref{ex:5-pt-pos-dom-Z2}.
	
\begin{proposition}\label{prop:C*-covers-P-Delta}
Let
\begin{align}
\Delta = \{(0,0), (0,\pm 1), (\pm 1,0)\} \subseteq \mathbb{Z}^2
\end{align}
(see \autoref{fig:Positivity-domain-5pts}).
The following identities hold:
\begin{align}
\label{eqn:C*-max-5pt}
\mathrm{C}^*_\mathrm{max}(\mathrm{UOS}(\Delta)) 
&\cong 
\mathrm{C}^*_\mathrm{u}(\mathrm{con}_1,\mathrm{con}_2),
\end{align} 
and
\begin{align}\label{eqn:C*-min-5pt}
\mathrm{C}^*_\mathrm{min}(\mathrm{UOS}(\Delta)) 
&\cong \mathrm{C}^*(\mathbb{F}_2).
\end{align}
\end{proposition}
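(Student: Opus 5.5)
Both identities will be derived from the coproduct decomposition in \autoref{ex:5-pt-pos-dom-Z2},
\begin{align}
\mathrm{UOS}(\Delta) = \mathrm{SOS}_\mathrm{h} \oplus_\mathbf{1} \mathrm{SOS}_\mathrm{v} \cong \mathrm{C}^*(\mathbb{Z})_{(\{-1,0,1\})} \oplus_\mathbf{1} \mathrm{C}^*(\mathbb{Z})_{(\{-1,0,1\})},
\end{align}
combined with the results on $\mathrm{C}^*$-covers of amalgamated direct sums recalled in \autoref{ss_ads}.

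For the maximal cover we use the identity $\mathrm{C}^*_\mathrm{max}(X \oplus_V Y) \cong \mathrm{C}^*_\mathrm{max}(X) \ast_{\mathrm{C}^*_\mathrm{max}(V)} \mathrm{C}^*_\mathrm{max}(Y)$ with $V = \mathbb{C}$, so that $\mathrm{C}^*_\mathrm{max}(V) = \mathbb{C}$. This gives the unital free product $\mathrm{C}^*_\mathrm{max}(\mathrm{SOS}_\mathrm{h}) \ast_\mathbf{1} \mathrm{C}^*_\mathrm{max}(\mathrm{SOS}_\mathrm{v})$. By (\ref{L:univ}), each factor is $\mathrm{C}^*_\mathrm{u}(\mathrm{con})$. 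The universal property then identifies the free product over the unit with $\mathrm{C}^*_\mathrm{u}(\mathrm{con}_1,\mathrm{con}_2)$, as noted just before the statement. The only care needed is to check that the generating contractions correspond to the images of $\delta_{(1,0)}$ and $\delta_{(0,1)}$, which holds by the construction of the coproduct.

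For the minimal cover we apply \cite[Theorem 4.11]{Cha23}, or \cite{FKPT18} under the enough-unitaries hypothesis. First, the minimal $\mathrm{C}^*$-cover of $\mathrm{C}^*(\mathbb{Z})_{(\{-1,0,1\})} = \mathrm{span}\{1,u,u^*\}$ inside $C(\mathbb{T})$ is $\mathrm{C}^*(\mathbb{Z}) = C(\mathbb{T})$. This holds because the unitary $u$ generates, and the Šilov boundary of this operator system in $C(\mathbb{T})$ is all of $\mathbb{T}$: every point of $\mathbb{T}$ is a peak point for $1+\mathrm{Re}(\bar\lambda z)$, so no nontrivial boundary ideal exists. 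Second, we verify the hypothesis. The operator system contains the unitary generator $u$, so it contains enough unitaries in the sense of \cite{Kav14}. Alternatively, it is hyperrigid, since any ucp extension of $\pi|_X$ for a representation $\pi$ must be multiplicative on the unitary $\pi(u)$, because $\phi(u)^*\phi(u)\le \phi(u^*u)=1$ with equality forced. Applying the theorem then gives $\mathrm{C}^*_\mathrm{min}(\mathrm{UOS}(\Delta)) \cong C(\mathbb{T}) \ast_\mathbf{1} C(\mathbb{T}) \cong \mathrm{C}^*(\mathbb{Z}) \ast_\mathbf{1} \mathrm{C}^*(\mathbb{Z}) \cong \mathrm{C}^*(\mathbb{F}_2)$, where the last step uses $\mathrm{C}^*(\Gamma_1 \ast \Gamma_2) \cong \mathrm{C}^*(\Gamma_1) \ast_\mathbf{1} \mathrm{C}^*(\Gamma_2)$.

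The main obstacle is verifying the hypotheses of the free-product theorem for the minimal cover: hyperrigidity, or enough unitaries, of $\mathrm{span}\{1,u,u^*\}$ in $C(\mathbb{T})$. The plan is to argue directly via the unitary $u$, using the Schwarz inequality and the multiplicative domain, to show that $u$ lies in the multiplicative domain of any ucp extension. A secondary point is keeping track of the complete order isomorphisms $\mathrm{SOS}_\mathrm{h} \cong \mathrm{C}^*(\mathbb{Z})_{(\{-1,0,1\})}$ coming from the operator-valued Fejér--Riesz lemma, so that the embeddings into the covers are the canonical ones.
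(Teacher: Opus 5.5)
Your proposal is correct and follows the paper's proof essentially step by step. You use the coproduct decomposition from \autoref{ex:5-pt-pos-dom-Z2}, the free-product formula for the maximal $\mathrm{C}^*$-cover combined with $\mathrm{C}^*_\mathrm{max}(\mathrm{C}^*(\mathbb{Z})_{(\{-1,0,1\})}) = \mathrm{C}^*_\mathrm{u}(\mathrm{con})$, and the free-product formula for the minimal $\mathrm{C}^*$-cover under the enough-unitaries (equivalently, hyperrigidity) hypothesis, which gives $\mathrm{C}^*(\mathbb{Z}) \ast_\mathbf{1} \mathrm{C}^*(\mathbb{Z}) \cong \mathrm{C}^*(\mathbb{F}_2)$; the only difference is that you verify directly (peak points, and the Schwarz-inequality/multiplicative-domain argument) what the paper imports from \cite[Proposition 5.6]{Kav14} and \cite[Theorem 5.2]{FKPT18}.
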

	
\begin{proof}
Recall that, by (\ref{eqn:P-Delta-amalgamated-direct-sum}), 
\begin{align}
\mathrm{UOS}(\Delta) 
= \mathrm{C}^*(\mathbb{Z})_{(\{-1,0,1\})} \oplus_\mathbf{1} \mathrm{C}^*(\mathbb{Z})_{(\{-1,0,1\})}.
\end{align}
The identity for the maximal $\mathrm{C}^*$-cover (\ref{eqn:C*-max-5pt}) follows by the compatibility of the coproduct with the amalgamated free product of the universal C*-algebra of the Fourier system $\mathrm{C}^*(\mathbb{Z})_{(\{-1,0,1\})}$.
That is, we have 
\begin{align}
\mathrm{C}^*_\mathrm{max}(\mathrm{UOS}(\Delta))
& = 
\mathrm{C}^*_\mathrm{max}\left(\mathrm{C}^*(\mathbb{Z})_{(\{-1,0,1\})} \oplus_\mathbf{1} \mathrm{C}^*(\mathbb{Z})_{(\{-1,0,1\})}\right) \\
& = 
\mathrm{C}^*_\mathrm{max}\left(\mathrm{C}^*(\mathbb{Z})_{(\{-1,0,1\})}\right) \ast_\mathbf{1} \mathrm{C}^*_\mathrm{max}\left(\mathrm{C}^*(\mathbb{Z})_{(\{-1,0,1\})}\right) \\
& = 
\mathrm{C}^*_\mathrm{u}(\mathrm{con}) \ast_\mathbf{1} \mathrm{C}^*_\mathrm{u}(\mathrm{con}),
\end{align}
where we applied (\ref{L:univ}) for the equality in the last line and \cite[Proposition 4.2]{Cha23} for the equality in the second line. 
Since the universal unital $\mathrm{C}^*$-algebra of two contractions equals $\mathrm{C}^*_\mathrm{u}(\mathrm{con}) \ast_\mathbf{1} \mathrm{C}^*_\mathrm{u}(\mathrm{con})$, we obtain (\ref{eqn:C*-max-5pt}).
		
For the identity involving the minimal $\mathrm{C}^*$-cover (\ref{eqn:C*-min-5pt}), note that, since the Fourier system
\begin{align}
\mathrm{C}^*(\mathbb{Z})_{(\{-1,0,1\})} = \mathrm{span} \{\delta_1^*, \delta_0, \delta_1\}
\subseteq \mathrm{C}^*(\mathbb{Z})
\end{align}
contains the unitary generator of $\mathrm{C}^*(\mathbb{Z})$, we have $\mathrm{C}^*_\mathrm{min}(\mathrm{C}^*(\mathbb{Z})_{(\{-1,0,1\})}) = \mathrm{C}^*(\mathbb{Z})$ \cite[Proposition 5.6]{Kav14}.
As noticed in \cite[Theorem 5.2]{FKPT18}, if $X \subseteq A$, $Y \subseteq B$ are operator systems which respectively contain the unitary generators of the $\mathrm{C}^*$-algebras $A$, $B$, then the coproduct $X \oplus_\mathbf{1} Y$ contains the unitary generators of $A \ast_\mathbf{1} B$, and hence, by \cite[Proposition 5.6]{Kav14}, we have $\mathrm{C}^*_\mathrm{min}(X) = A$, $\mathrm{C}^*_\mathrm{min}(Y) = B$ and 
\begin{align} 
\mathrm{C}^*_\mathrm{min}(X \oplus_\mathbf{1} Y) = A \ast_\mathbf{1} B.
\end{align}
In our case, this gives
\begin{align}
\mathrm{C}^*_\mathrm{min}(\mathrm{UOS}(\Delta))
& \cong 
\mathrm{C}^*_\mathrm{min}\left(\mathrm{C}^*(\mathbb{Z})_{(\{-1,0,1\})} \oplus_\mathbf{1} \mathrm{C}^*(\mathbb{Z})_{(\{-1,0,1\})}\right) \\
& \cong 
\mathrm{C}^*_\mathrm{min}\left(\mathrm{C}^*(\mathbb{Z})_{(\{-1,0,1\})}\right) \ast_\mathbf{1} \mathrm{C}^*_\mathrm{min}\left(\mathrm{C}^*(\mathbb{Z})_{(\{-1,0,1\})}\right) \\
& \cong 
\mathrm{C}^*(\mathbb{Z}) \ast_\mathbf{1} \mathrm{C}^*(\mathbb{Z})
\cong 
\mathrm{C}^*(\mathbb{F}_2),
\end{align}
and the proof is complete.
\end{proof}

\begin{remark}\label{rem:hyperi}
We point out that \cite[Theorem 5.2]{FKPT18} can be strengthened to operator systems which are hyperrigid in their minimal $\mathrm{C}^*$-cover, see \cite[Theorem 1]{Dun08}, \cite[Theorem 5.3.21]{DFK17}, \cite[Theorem 4.11]{Cha23}.
Note that an operator system which contains the unitary generators of its ambient $\mathrm{C}^*$-algebra is hyperrigid in it \cite[Proposition 4.8]{Cha23}.
In particular, the Fourier systems $\mathrm{C}^*(\mathbb{Z})_{(\{-1,0,1\})}$ and $\mathrm{C}^*(\mathbb{Z}^2)_{(\Delta)}$, for $\Delta$ as in \autoref{fig:Positivity-domain-5pts}, are hyperrigid in $\mathrm{C}^*(\mathbb{Z})$ and $\mathrm{C}^*(\mathbb{Z}^2)$, respectively;
as a consequence of \autoref{prop:C*-covers-P-Delta}, the operator system $\mathrm{UOS}(\Delta)$ is hyperrigid in $\mathrm{C}^*(\mathbb{F}_2)$.
\end{remark}
    
For the next definition recall the canonical map \begin{align}
\iota_0 : \mathrm{UOS}_0(\Delta) \rightarrow \mathrm{C}^*(\Gamma)_{(\Delta)}, \ \ \delta_s \mapsto \delta_s,
\end{align}
which is ucp by the definition of the positive cones $\mathcal{U}_n(\Delta)$ of the matrix order unit space $\mathrm{UOS}_0(\Delta)$.
In the diagram \autoref{fig:Factorization-properties} we furthermore include the quotient map $q : \mathrm{UOS}_0(\Delta) \rightarrow \mathrm{UOS}(\Delta)$ from the Archimedeanisation as well as the map $\iota : \mathrm{UOS}(\Delta) \rightarrow \mathrm{C}^*(\Gamma)_{(\Delta)}$ which is canonically induced by the universal property of the Archimedeanisation.
	
\begin{figure}[h]
\begin{tikzcd}
\mathrm{UOS}_0(\Delta) & \mathrm{C}^*(\Gamma)_{(\Delta)} \subseteq \mathrm{C}^*(\Gamma)\\
\mathrm{UOS}(\Delta) &
%
\arrow["\iota_0"', from=1-1, to=1-2]
\arrow["q", from=1-1, to=2-1]
\arrow["\iota", from=2-1, to=1-2]
\end{tikzcd}
\caption{The canonical maps $\iota_0$ and $\iota$.}
\label{fig:Factorization-properties}
\end{figure}

We will denote by $\mathcal{S}(\Delta)$ the family of finite subsets $\Sigma \subseteq \Gamma$ which satisfy $\Sigma\Sigma^{-1} \subseteq \Delta$.

\begin{definition}\label{def:Complete-factorization-property}
Let $\Gamma$ be a discrete group and $\Delta \subseteq \Gamma$ be a positivity domain.
We say that $\Delta$ possesses:
\begin{itemize}
\item[(i)] 
the \emph{complete strict factorisation property} if, for every $n \in \mathbb{N}$ and every element 
$x \in \mathrm{M}_n(\mathrm{C}^*(\Gamma))^+$ with 
finite support inside $\Delta$, there exist $r\in \bb{N}$, finite subsets $\Sigma_i\in \cl S(\Delta)$ and elements $y_i \in \mathrm{M}_n(\mathrm{C}^*(\Gamma))$, $i\in [r]$, supported on $\Sigma_i$, such that 
\begin{equation}\label{eq_precise}
x = \sum_{i=1}^r y_iy_i^*;   
\end{equation}

\item[(ii)] 
the \emph{complete factorisation property} if, for every $n \in \mathbb{N}$, every element $z \in \mathrm{M}_n(\mathrm{C}^*(\Gamma))^+$ with finite support inside $\Delta$, and every $\epsilon > 0$, the element $z + \epsilon \delta_e \otimes \mathbf{1}_n$ admits a decomposition of the form 
(\ref{eq_precise}).
\end{itemize}
\end{definition}

\begin{remark}\label{rmk:FP-equiv-embedding}
It follows from the definition of the operator system $\mathrm{UOS}(\Delta)$ that a positivity domain $\Delta \subseteq \Gamma$ has the complete factorisation property if the canonical map $\iota : \mathrm{UOS}(\Delta) \rightarrow \mathrm{C}^*(\Gamma)$ is a complete order embedding.
\end{remark}

\begin{corollary}\label{cor:Failure-complete-factorization-property-free-Delta}
The positivity domain $\Delta = \{(0,0), (0,\pm 1), (\pm 1,0)\} \subseteq \mathbb{Z}^2$, as in \autoref{fig:Positivity-domain-5pts}, does not possess the complete factorisation property.
\end{corollary}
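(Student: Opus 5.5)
The plan is to argue by contradiction: if $\Delta$ had the complete factorisation property, the canonical map $\iota : \mathrm{UOS}(\Delta) \rightarrow \mathrm{C}^*(\mathbb{Z}^2)_{(\Delta)}$ would be a unital complete order isomorphism. Two complete order isomorphic operator systems have $*$-isomorphic $\mathrm{C}^*$-envelopes, and the two envelopes here can be computed and seen to differ.

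\emph{Step 1: $\iota$ is a complete order isomorphism.} In \autoref{ex:5-pt-pos-dom-Z2} we saw that $\mathrm{UOS}(\Delta) = \mathrm{UOS}_0(\Delta)$. Each cone $\mathcal{U}_n(\Delta) = \mathcal{Q}_n(\Sigma_\mathrm{h}) + \mathcal{Q}_n(\Sigma_\mathrm{v})$ is closed, and the underlying $^*$-vector space is $\mathrm{S}[\Delta]$ with no kernel quotiented out. So $\iota$ is the identity on $\mathrm{S}[\Delta]$ and is a linear bijection onto $\mathrm{C}^*(\mathbb{Z}^2)_{(\Delta)}$; it is ucp by \autoref{rem:Delta-cones-MOU}.

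Now let $z \in \mathrm{M}_n(\mathrm{C}^*(\mathbb{Z}^2)_{(\Delta)})^+$. The complete factorisation property gives $z + \epsilon\, \delta_e \otimes \mathbf{1}_n \in \mathcal{U}_n(\Delta)$ for every $\epsilon > 0$. Closedness of $\mathcal{U}_n(\Delta)$, or equivalently the Archimedean property of the matrix order unit of $\mathrm{UOS}(\Delta)$, then yields $z \in \mathcal{U}_n(\Delta)$. Hence $\iota^{-1}$ is completely positive, and $\iota$ is a unital complete order isomorphism.

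\emph{Step 2: the envelopes differ.} By \autoref{prop:C*-covers-P-Delta} we have $\mathrm{C}^*_\mathrm{min}(\mathrm{UOS}(\Delta)) \cong \mathrm{C}^*(\mathbb{F}_2)$. On the other hand, $\mathrm{C}^*(\mathbb{Z}^2)_{(\Delta)}$ contains the unitary generators $\delta_{(1,0)}$ and $\delta_{(0,1)}$ of $\mathrm{C}^*(\mathbb{Z}^2)$. By \cite[Proposition 5.6]{Kav14} (as in the proof of \autoref{prop:C*-covers-P-Delta}) its minimal $\mathrm{C}^*$-cover is therefore $\mathrm{C}^*(\mathbb{Z}^2)$.

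By uniqueness of the $\mathrm{C}^*$-envelope, Step 1 would give a $*$-isomorphism $\mathrm{C}^*(\mathbb{F}_2) \cong \mathrm{C}^*(\mathbb{Z}^2)$ carrying the images of $\delta_{(1,0)}, \delta_{(0,1)}$ to the corresponding generators. This is impossible, since $\mathrm{C}^*(\mathbb{Z}^2)$ is commutative and $\mathrm{C}^*(\mathbb{F}_2)$ is not. For the latter one can use, for example, the left regular representation, or the fact that $\mathbb{F}_2$ has non-commuting unitary representations.

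\emph{Main obstacle.} The only delicate point is Step 1. \autoref{rmk:FP-equiv-embedding} states only one direction of the link between factorisation and the embedding $\iota$, so I must verify the converse direction myself. The $\epsilon$-perturbation in the definition of the complete factorisation property must be removed, and this is exactly where the closedness of the cones $\mathcal{U}_n(\Delta)$ from \autoref{ex:5-pt-pos-dom-Z2} (via \autoref{prop:Finite-dim-P-Delta-quotient}) is used.
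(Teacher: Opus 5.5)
Your proof is correct and is essentially the paper's argument. Assuming $\iota$ is a complete order isomorphism, you compare the $\mathrm{C}^*$-envelopes $\mathrm{C}^*(\mathbb{F}_2)$ (from \autoref{prop:C*-covers-P-Delta}) and $\mathrm{C}^*(\mathbb{Z}^2)$ (from \cite[Proposition 5.6]{Kav14}) and reach a contradiction. Your Step 1 usefully spells out the implication ``complete factorisation $\Rightarrow$ $\iota$ is a complete order isomorphism'', which the paper uses without stating it; it in fact holds for every positivity domain directly from the definition of the Archimedeanised cones $\mathcal{C}_n^{\mathrm{arch}}$, which absorb the $\epsilon$-perturbation, so the closedness of $\mathcal{U}_n(\Delta)$ is convenient here but not essential.
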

	
\begin{proof}
Since $\mathrm{C}^*(\mathbb{Z}^2)$ is the C$^*$-algebra generated by the unitary generators of $\mathrm{C}^*(\mathbb{Z}^2)_{(\Delta)}$, it follows from \cite[Proposition 5.6]{Kav14} that $\mathrm{C}^*_\mathrm{min}(\mathrm{C}^*(\mathbb{Z}^2)_{(\Delta)}) = \mathrm{C}^*(\mathbb{Z}^2)$.
If the canonical isomorphism $\iota : \mathrm{UOS}(\Delta) \rightarrow \mathrm{C}^*(\mathbb{Z}^2)_{(\Delta)}$ was a complete order isomorphism, we would have $$\mathrm{C}^*(\mathbb{F}_2) = \mathrm{C}^*_\mathrm{min}(\mathrm{UOS}(\Delta)) \cong \mathrm{C}^*_\mathrm{min}(\mathrm{C}^*(\mathbb{Z}^2)_{(\Delta)}) = \mathrm{C}^*(\mathbb{Z}^2),$$ 
a contradiction. 
\end{proof}

\subsection{The operator system $\mathrm{UOS}(\Delta)$ as a colimit}

In this subsection, we express $\mathrm{UOS}(\Delta)$ as a colimit of sums-of-squares operator systems (see \autoref{prop:Colimit}).
We begin with some useful observations for the sums-of-squares operator systems.
We note that if $\Sigma \subseteq \Gamma$ and $\gamma \in \Gamma$ then $(\Sigma \gamma) (\Sigma \gamma)^{-1} = \Sigma \Sigma^{-1}$, and thus the identity map
\begin{align}
\iota : \mathrm{SOS}(\Sigma \gamma) \rightarrow \mathrm{SOS}(\Sigma)
\end{align}
is well defined.

\begin{lemma}\label{lem:sos-same}
Let $\Gamma$ be a discrete group, $\Sigma \subseteq \Gamma$ be a finite set and $\gamma\in \Gamma$.
\begin{itemize}
\item[(i)] The identity map
\begin{align}
\iota : \mathrm{SOS}(\Sigma \gamma) \rightarrow \mathrm{SOS}(\Sigma)
\end{align}
is unital and completely positive, and thus a complete order isomorphism.

\item[(ii)] If $\Sigma' \subseteq \Sigma$ then the identity map
\begin{align}
\iota : \mathrm{SOS}(\Sigma') \rightarrow \mathrm{SOS}(\Sigma)
\end{align}
is unital and completely positive.

\item[(iii)] If $\gamma = st^{-1}$ for some $s,t \in \Sigma$, then 
the identity map
\begin{align}
\iota : \mathrm{SOS}(\{e, \gamma\}) \rightarrow \mathrm{SOS}(\Sigma)
\end{align}
is unital and completely positive.
\end{itemize}
\end{lemma}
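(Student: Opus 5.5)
The plan is to reduce every part to cone inclusions $\mathcal{Q}_n(\Sigma_1) \subseteq \mathcal{Q}_n(\Sigma_2)$ between sums-of-squares cones. Since both operator systems have unit $\delta_e$ and the identity map fixes $\delta_e$, unitality is automatic, and complete positivity is exactly the inclusion of the $n$-th cones for every $n$. Throughout I will use the description of $\mathcal{Q}_n(\Sigma)$ as sums $\sum_i y_iy_i^*$ with $y_i$ supported on $\Sigma$ (equivalently the form in \autoref{lem:SOS-cones-positive-matrix-version}).

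For (i), I take $x=\sum_i y_iy_i^* \in \mathcal{Q}_n(\Sigma\gamma)$ with $y_i = \sum_{s\in\Sigma} A^i_s\otimes\delta_{s\gamma}$, and set $y_i' := y_i(\mathbf{1}_n\otimes\delta_{\gamma^{-1}}) = \sum_{s\in\Sigma} A_s^i\otimes\delta_s$. Then $y_i'$ is supported on $\Sigma$, and since $\delta_{\gamma^{-1}}$ is unitary,
\begin{align}
y_i'(y_i')^* = y_i(\mathbf{1}_n\otimes\delta_{\gamma^{-1}}\delta_{\gamma^{-1}}^*)y_i^* = y_iy_i^*.
\end{align}
Hence $x\in\mathcal{Q}_n(\Sigma)$, which is the same computation as in \autoref{ex:5-pt-pos-dom-Z2}. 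Applying this with $\Sigma\gamma$ in place of $\Sigma$ and $\gamma^{-1}$ in place of $\gamma$ gives the reverse inclusion. So the cones coincide and the identity map is a complete order isomorphism.

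For (ii), the inclusion $\mathcal{Q}_n(\Sigma')\subseteq\mathcal{Q}_n(\Sigma)$ is immediate: an element supported on $\Sigma'$ is also supported on $\Sigma$ (pad with zero coefficients). The underlying spaces satisfy $\mathrm{S}[\Sigma'\Sigma'^{-1}]\subseteq\mathrm{S}[\Sigma\Sigma^{-1}]$, so the identity map is well defined as an inclusion.

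For (iii), I write $\gamma = st^{-1}$ and use (i) and (ii). The set $\{e,\gamma\}t = \{t, st^{-1}t\} = \{t,s\}$ is a subset of $\Sigma$. By (i) the identity $\mathrm{SOS}(\{e,\gamma\})\to\mathrm{SOS}(\{t,s\})$ is a complete order isomorphism, since both systems live on the same space $\mathrm{S}[\{e,\gamma,\gamma^{-1}\}]$. By (ii) the inclusion $\mathrm{SOS}(\{t,s\})\to\mathrm{SOS}(\Sigma)$ is ucp, and composing the two gives the claim. The only delicate points are bookkeeping ones: applying (i) in the correct direction, which is symmetric, and handling the degenerate case $s=t$, where $\{e,\gamma\}=\{e\}$ and the claim is trivial. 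I do not expect a genuine obstacle.
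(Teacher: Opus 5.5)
Your proof is correct and follows essentially the same route as the paper: in (i) you right-multiply by the unitary $\mathbf{1}_n\otimes\delta_{\gamma^{-1}}$ to get $\mathcal{Q}_n(\Sigma\gamma)=\mathcal{Q}_n(\Sigma)$, in (ii) you use the trivial cone inclusion, and in (iii) you combine the two via $\{e,\gamma\}t=\{t,s\}\subseteq\Sigma$. The paper writes the last step as $\{e,\gamma\}=\{t,s\}t^{-1}$, which is the same observation with (i) applied in the other (symmetric) direction.
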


\begin{proof}
(i) 
For $n,k \in \mathbb{N}$ let $A_{s} \in \mathrm{M}_{n,k}$, $s \in \Sigma$.
We compute directly
\begin{align}
&\left( \sum_{s \in \Sigma} A_{s} \otimes \delta_{s\gamma} \right) \left( \sum_{s \in \Sigma} A_{s} \otimes \delta_{s\gamma} \right)^* \\
& =
\left( \sum_{s \in \Sigma} A_{s} \otimes \delta_{s} \right) (I \otimes \delta_{\gamma}) (I \otimes \delta_{\gamma})^* \left( \sum_{s \in \Sigma} A_{s} \otimes \delta_{s} \right)^* \\
& =
\left( \sum_{s \in \Sigma} A_{s} \otimes \delta_{s} \right) \left( \sum_{s \in \Sigma} A_{s} \otimes \delta_{s} \right)^*.
\end{align}
This shows that $Q_n(\Sigma \gamma) = Q_n(\Sigma)$ and thus the unital map is completely positive.
Applying in reverse for $\Sigma\gamma$ and $\gamma^{-1}$ we get that the inverse map is completely positive, as required.

\smallskip
\noindent
(ii) 
This follows directly from the definition of the cones $\mathcal{Q}_n(\Sigma)$.

\smallskip
\noindent
(iii) 
This follows by combining items (i) and (ii) since $\{e,\gamma\} = \{t, s\} t^{-1}$ and $\{t,s\} \subseteq \Sigma$.
\end{proof}

We now aim to describe the operator system $\mathrm{UOS}(\Delta)$ as a colimit in the category $\mathbf{OSy}$.
We introduce some notation that will be used in \autoref{prop:Colimit} below.
Let $\Gamma$ be a discrete group and $\Delta \subseteq \Gamma$ be a positivity domain. 
For $\Sigma_1, \Sigma_2 \in \mathcal{S}(\Delta)$ we write $\Sigma_1 \leq_\Delta \Sigma_2$ if:
\begin{enumerate}
\item[(i)] $\Sigma_1\Sigma_1^{-1} \subseteq \Sigma_2\Sigma_2^{-1}$, and 

\item[(ii)] the canonical inclusion map 
\begin{align} 
\iota_{2,1} : \mathrm{SOS}(\Sigma_1) \rightarrow \mathrm{SOS}(\Sigma_2), \ \
\delta_{st^{-1}} \mapsto \delta_{st^{-1}}
\end{align}
is unital and completely positive.
\end{enumerate} 
We call $\leq_\Delta$ the \emph{$\Delta$-partial order} on $\mathcal{S}(\Delta)$.
We write $\Sigma_1\sim_\Delta \Sigma_2$ if $\Sigma_1\leq_\Delta\Sigma_2$ and $\Sigma_2\leq_\Delta \Sigma_1$.

\begin{theorem}\label{prop:Colimit}
Let $\Gamma$ be a discrete group and $\Delta \subseteq \Gamma$ be a positivity domain.
Then the colimit of the diagram $((\mathrm{SOS}(\Sigma))_{\Sigma \in \mathcal{S}(\Delta)}, (\iota_{2,1})_{\Sigma_1 \leq_{\Delta} \Sigma_2})$ in the operator system category exists and coincides with $\mathrm{UOS}(\Delta)$.
\end{theorem}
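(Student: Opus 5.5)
The strategy is to reduce to \autoref{lem:Arch-colimit-commute}. First I will show that the matrix order unit space $\mathrm{UOS}_0(\Delta)$, together with the canonical identity maps $\phi_{\infty,\Sigma} : \mathrm{SOS}(\Sigma) \rightarrow \mathrm{UOS}_0(\Delta)$, $\delta_{st^{-1}} \mapsto \delta_{st^{-1}}$, is the colimit of the diagram in $\mathbf{MOU}$. Then I will Archimedeanise. Each $\mathrm{SOS}(\Sigma)$ is already an operator system by \autoref{prop:Finite-SOS-archimedean}, so $\mathrm{Arch}(\mathrm{SOS}(\Sigma)) = \mathrm{SOS}(\Sigma)$ and $\mathrm{Arch}(\iota_{2,1}) = \iota_{2,1}$. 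Hence \autoref{lem:Arch-colimit-commute} gives
\begin{align}
\underset{\longrightarrow}{\lim}_\mathbf{OSy} \mathrm{SOS}(\Sigma) = \mathrm{Arch}(\mathrm{UOS}_0(\Delta)) = \mathrm{UOS}(\Delta).
\end{align}

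\textbf{Compatibility of the cocone.} Each $\phi_{\infty,\Sigma}$ is well defined because $\mathrm{S}[\Sigma\Sigma^{-1}] \subseteq \mathrm{S}[\Delta]$. It is unital, and it is completely positive because $\mathcal{Q}_n(\Sigma) \subseteq \mathcal{U}_n(\Delta)$ by (\ref{eqn:E-cone-sum}). The relation $\phi_{\infty,\Sigma_2} \circ \iota_{2,1} = \phi_{\infty,\Sigma_1}$ holds trivially, since all maps act as the identity on the generators $\delta_s$.

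\textbf{Universal property.} Let $Y$ be a matrix order unit space with ucp maps $\psi_\Sigma : \mathrm{SOS}(\Sigma) \rightarrow Y$ satisfying $\psi_{\Sigma_2}\circ\iota_{2,1} = \psi_{\Sigma_1}$.

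\emph{Well-definedness.} I need a single linear map $\psi : \mathrm{S}[\Delta] \rightarrow Y$ with $\psi(\delta_\gamma) = \psi_\Sigma(\delta_\gamma)$ whenever $\gamma \in \Sigma\Sigma^{-1}$. The value must not depend on $\Sigma$, and this is the main obstacle; I intend to route everything through two-point sets. Suppose $\gamma = st^{-1}$ with $s,t \in \Sigma$. By \autoref{lem:sos-same}(iii), the identity map $\mathrm{SOS}(\{e,\gamma\}) \rightarrow \mathrm{SOS}(\Sigma)$ is ucp. Since $\{e,\gamma\}\{e,\gamma\}^{-1} \subseteq \Sigma\Sigma^{-1}$, this gives $\{e,\gamma\} \leq_\Delta \Sigma$, and compatibility forces
\begin{align}
\psi_\Sigma(\delta_\gamma) = \psi_{\{e,\gamma\}}(\delta_\gamma).
\end{align}
The right-hand side depends only on $\gamma$. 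For $\gamma = e$ the value is $e_Y$ by unitality. So I define $\psi(\delta_\gamma) := \psi_{\{e,\gamma\}}(\delta_\gamma)$ for $\gamma \in \Delta$ and extend linearly. Every $\gamma \in \Delta$ lies in some $\Sigma \in \mathcal{S}(\Delta)$, for instance $\{e,\gamma\}$, because $\{e,\gamma\}\{e,\gamma\}^{-1} = \{e,\gamma,\gamma^{-1}\} \subseteq \Delta$ by symmetry of $\Delta$. Hence $\psi$ is defined on all of $\mathrm{S}[\Delta]$, and $\psi\circ\phi_{\infty,\Sigma} = \psi_\Sigma$ for every $\Sigma$. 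Uniqueness is clear, since the images of the $\phi_{\infty,\Sigma}$ span $\mathrm{S}[\Delta]$.

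\emph{Complete positivity.} By (\ref{eqn:E-cone-sum}), any $x \in \mathcal{U}_n(\Delta)$ is a finite sum $x = \sum_j x_j$ with $x_j \in \mathcal{Q}_n(\Sigma_j)$ and $\Sigma_j \in \mathcal{S}(\Delta)$. Then
\begin{align}
\psi^{(n)}(x) = \sum_j \psi_{\Sigma_j}^{(n)}(x_j) \in \mathrm{M}_n(Y)^+,
\end{align}
so $\psi$ is ucp.

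This establishes the $\mathbf{MOU}$-colimit, and the reduction described at the start then finishes the proof. I expect the only delicate point to be the well-definedness step, which is exactly where \autoref{lem:sos-same}(iii) is used.
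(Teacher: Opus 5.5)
Your proof is correct and follows the paper's argument: you define the map on generators, get well-definedness from \autoref{lem:sos-same}(iii) via the two-point sets $\{e,\gamma\}$, get complete positivity from the sum decomposition (\ref{eqn:E-cone-sum}), and then Archimedeanise using \autoref{lem:Arch-colimit-commute}. You test the universal property against arbitrary matrix order unit spaces $Y$ (the paper tests only against operator systems) and note explicitly that $\mathrm{Arch}(\mathrm{SOS}(\Sigma)) = \mathrm{SOS}(\Sigma)$; these two points are what make the application of that lemma fully precise.
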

	
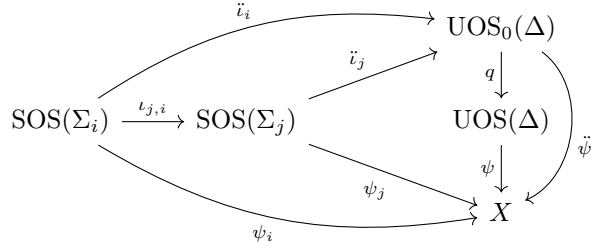
\begin{figure}[h]
	\centering
	\begin{tikzcd}
		&&& \mathrm{UOS}_0(\Delta)  \\
		\mathrm{SOS}(\Sigma_i) & \mathrm{SOS}(\Sigma_j) & &  \mathrm{UOS}(\Delta) \\
		&&& X 
		\arrow["\ddot{\iota}_{i}", bend left=20, from=2-1, to=1-4]
		\arrow["\ddot{\iota}_{j}", from=2-2, to=1-4]
		\arrow["q"', from=1-4, to=2-4]
		\arrow["\ddot{\psi}", bend left=60, from=1-4, to=3-4]
		\arrow["\iota_{j,i}", from=2-1, to=2-2]
		\arrow["\psi_{i}"', bend right=20, from=2-1, to=3-4]
		\arrow["\psi_{j}"', from=2-2, to=3-4]
		\arrow["\psi"', from=2-4, to=3-4]
	\end{tikzcd}
	\caption{The universal property of the colimit for the diagram of sums-of-squares system.}
\end{figure}
	
\begin{proof}
We check the universal property of the colimit, by first working in the matrix order unit space category and then Archimedeanizing.
To this end, let $X$ be an operator system and assume that, for every $\Sigma \in \mathcal{S}(\Delta)$, there is a ucp map $\psi_{\Sigma} : \mathrm{SOS}(\Sigma) \rightarrow X$ such that 
\begin{align}\label{eqn:Commutativity-colimit}
\psi_{\Sigma_2} \circ \iota_{2,1} = \psi_{\Sigma_1},
\end{align}
whenever $\Sigma_1 \leq_\Delta \Sigma_2$.
Define the map $\ddot{\psi} : \mathrm{UOS}_0(\Delta) \rightarrow X$ by letting $\ddot{\psi}(\delta_\gamma) := \psi_{\Sigma}(\delta_\gamma)$, for all $\gamma \in \Delta$ and any $\Sigma \in \mathcal{S}(\Delta)$ with $\gamma \in \Sigma\Sigma^{-1}$.

We first show that $\ddot{\psi}$ is a well-defined map.
To this end for $\gamma \in \Gamma$ set $\Sigma_\gamma = \{e, \gamma\}$.
For any $\Sigma$ containing $\gamma$ we have that $\{e,\gamma\} \leq_{\Delta} \Sigma$ by item (iii) of \autoref{lem:sos-same}.
If $\iota : \mathrm{SOS}(\{e,\gamma\}) \rightarrow \mathrm{SOS}(\Sigma)$ is the ucp inclusion map we have 
\begin{align}
\psi_{\Sigma_\gamma}(\delta_\gamma) = \psi_{\Sigma} \circ \iota(\delta_\gamma) = \psi_{\Sigma}(\delta_{\gamma}).
\end{align}
Therefore $\ddot{\psi}(\delta_\gamma)$ is independent of the choice of $\Sigma$ in which $\gamma$ is contained.
Further, the map $\ddot{\psi}$ is unital by the unitality of the maps $\psi_{\Sigma}$.

We next show that the map $\ddot{\psi}$ is completely positive.
If $x \in \mathrm{M}_n(\mathrm{UOS}_0(\Delta))^+ = \mathcal{U}_n(\Delta)$ is a positive matrix over $\mathrm{UOS}_0(\Delta)$, we have that $x = \sum_{i=1}^k x_i$, with $x_i \in \mathrm{M}_n(\mathrm{SOS}(\Sigma_i))^+ = \mathcal{Q}_n(\Sigma_i)$, for some $\Sigma_i \in \mathcal{S}(\Delta)$, $i=1,\dots,k$.
By the complete positivity of $\psi_{\Sigma_i}$, we have $\ddot{\psi}^{(n)}(x) = \sum_{i=1}^n \psi_{i}^{(n)}(x_i) \in \mathrm{M}_n(X)^+$, as required.

Therefore the matrix order unit space $\mathrm{UOS}_0(\Delta)$ possesses the universal property of the colimit in the matrix order unit space category.
Since passing to colimits commutes with Archimedeanisation  (see \autoref{lem:Arch-colimit-commute}), we obtain that $\mathrm{UOS}(\Delta)$ is the colimit of our diagram $((\mathrm{SOS}(\Sigma))_{\Sigma \in \mathcal{S}(\Delta)}, (\iota_{2,1})_{\Sigma_1 \leq_{\Delta} \Sigma_2})$ in the operator system category
\begin{align}
\mathrm{UOS}(\Delta)
= \mathrm{Arch}\left(\mathrm{UOS}_0(\Delta)\right)
= \mathrm{Arch}\left(\underset{\longrightarrow}{\lim}_\mathbf{MOU} \mathrm{SOS}(\Sigma)\right)
= \underset{\longrightarrow}{\lim}_\mathbf{OSy} \mathrm{SOS}(\Sigma),
\end{align}
and the proof is complete.
\end{proof}
	
To obtain a description of the diagram in \autoref{prop:Colimit} only in terms of subsets of the group $\Gamma$, it would be desirable to express the $\Delta$-partial order only in group theoretic terms.
We formulate this in the following question.

\begin{question}\label{qu:Delta-partial-order}
Let $\Gamma$ be a discrete group and $\Sigma_1, \Sigma_2 \subseteq \Gamma$ finite subsets with $\Sigma_1\Sigma_1^{-1} \subseteq \Sigma_2\Sigma_2^{-1}$.
When is the canonical map 
\begin{align} 
\mathrm{SOS}(\Sigma_1) \rightarrow \mathrm{SOS}(\Sigma_2), \ \ \delta_{st^{-1}} \mapsto \delta_{st^{-1}},
\end{align}
completely positive?
\end{question}

We next show how the cartesian product of positivity domain compiles with the maximal tensor product.

\begin{theorem}\label{prop:Product-delta-systems}
Let $\Gamma$, $\Gamma'$ be discrete groups and $\Delta \subseteq \Gamma$, $\Delta' \subseteq \Gamma'$ positivity domains.
We have that $\mathrm{UOS}(\Delta \times \Delta') \cong \mathrm{UOS}(\Delta) \otimes_\mathrm{max} \mathrm{UOS}(\Delta')$.
\end{theorem}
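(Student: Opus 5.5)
We prove the statement by identifying the cones of both sides on the common $^*$-vector space $\mathrm{S}[\Delta\times\Delta'] \cong \mathrm{S}[\Delta]\otimes\mathrm{S}[\Delta']$, using the identification (\ref{eq_S1S2}) with $\delta_{(s,s')}\leftrightarrow\delta_s\otimes\delta_{s'}$. First we work at the level of the matrix order unit spaces $\mathrm{UOS}_0$, and then pass to the Archimedeanisations. The guiding identity is
\[
\mathcal{U}_n(\Delta\times\Delta') = \mathcal{D}_n^{\max}\bigl(\mathrm{UOS}_0(\Delta),\mathrm{UOS}_0(\Delta')\bigr), \qquad n\in\mathbb{N}.
\]
Here the right-hand side is the max-cone from (\ref{eqn:D-max-cones}), built from the cones $\mathcal{U}_k(\Delta)$ and $\mathcal{U}_\ell(\Delta')$.

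For the inclusion ``$\supseteq$'', take $x\in\mathcal{U}_k(\Delta)$ and $y\in\mathcal{U}_\ell(\Delta')$. By (\ref{eqn:E-cone-sum}), $x=\sum_i x_i$ with $x_i\in\mathcal{Q}_k(\Sigma_i)$, and $y=\sum_j y_j$ with $y_j\in\mathcal{Q}_\ell(\Sigma'_j)$. Each $x_i\otimes y_j$ lies in $\mathcal{D}^{\max}_{k\ell}(\mathrm{SOS}(\Sigma_i),\mathrm{SOS}(\Sigma'_j))$. By \autoref{prop:SOS-max-tensor-product} (in its proof, the cone identification holds at every matrix level, since $\mathrm{SOS}$ is the maximal structure generated by its matricial sum-of-squares cones), this lies in $\mathcal{Q}_{k\ell}(\Sigma_i\times\Sigma'_j)$. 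A cleaner route is direct: if $x_i=aa^*$ and $y_j=bb^*$ with $a$ supported on $\Sigma_i$ and $b$ on $\Sigma'_j$, then $x_i\otimes y_j=(a\otimes b)(a\otimes b)^*$ with $a\otimes b$ supported on $\Sigma_i\times\Sigma'_j$. Since $(\Sigma_i\times\Sigma'_j)(\Sigma_i\times\Sigma'_j)^{-1}\subseteq\Delta\times\Delta'$, this element lies in $\mathcal{U}_{k\ell}(\Delta\times\Delta')$. Compressions $A(\cdot)A^*$ preserve $\mathcal{U}$ by \autoref{rem:Delta-cones-MOU}.

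For the inclusion ``$\subseteq$'', take $\Sigma\in\mathcal{S}(\Delta\times\Delta')$. By \autoref{rem:Delta-cones-MOU} and \autoref{lem:SOS-cones-MOU}(iii), it suffices to show that the generating matrices $(\delta_{(s_is_j^{-1},s'_i{s'_j}^{-1})})_{i,j}$, with $(s_i,s'_i)\in\Sigma$, lie in $\mathcal{D}^{\max}_n$. Write $\pi_1(\Sigma)=\Sigma_1$ and $\pi_2(\Sigma)=\Sigma_2$; then $\Sigma_1\Sigma_1^{-1}\subseteq\Delta$ and $\Sigma_2\Sigma_2^{-1}\subseteq\Delta'$, because the projections of $\Sigma\Sigma^{-1}$ equal $\Sigma_1\Sigma_1^{-1}$ and $\Sigma_2\Sigma_2^{-1}$. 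The matrix in question is a compression $A\bigl((\delta_{tu^{-1}})_{t,u\in\Sigma_1}\otimes(\delta_{t'u'^{-1}})_{t',u'\in\Sigma_2}\bigr)A^*$, where $A$ is the $0/1$ matrix selecting the rows indexed by $(s_i,s'_i)$. The two tensor factors lie in $\mathcal{Q}(\Sigma_1)\subseteq\mathcal{U}(\Delta)$ and $\mathcal{Q}(\Sigma_2)\subseteq\mathcal{U}(\Delta')$ respectively, so the matrix lies in $\mathcal{D}^{\max}_n$. This step is the crux. We must check that $\Sigma\subseteq\Sigma_1\times\Sigma_2$ is compatible with the product domain; this needs only the implication from $\Sigma\Sigma^{-1}\subseteq\Delta\times\Delta'$ to the projection conditions. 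The converse containment $(\Sigma_1\times\Sigma_2)(\cdots)^{-1}\subseteq\Delta\times\Delta'$ is not needed, because we land in the $\mathcal{D}^{\max}$ cone and not in a single $\mathcal{Q}$ cone.

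Having matched $\mathrm{UOS}_0(\Delta\times\Delta')$ with $\mathrm{UOS}_0(\Delta)\otimes\mathrm{UOS}_0(\Delta')$ equipped with the $\mathcal{D}^{\max}$ cones, it remains to Archimedeanise. The main obstacle is showing that $\mathrm{Arch}$ of the $\mathcal{D}^{\max}$ structure on $\mathrm{UOS}_0(\Delta)\otimes\mathrm{UOS}_0(\Delta')$ equals $\mathrm{UOS}(\Delta)\otimes_{\max}\mathrm{UOS}(\Delta')$, which is by definition $\mathrm{Arch}$ of the $\mathcal{D}^{\max}$ cones built from the Archimedeanised cones. We would handle this by universal properties rather than by cones. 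Ucp maps out of $\mathrm{Arch}(\mathrm{UOS}_0(\Delta)\otimes_{\mathcal{D}^{\max}}\mathrm{UOS}_0(\Delta'))$ into an operator system $Z$ correspond to jointly completely positive unital bilinear maps $\mathrm{UOS}_0(\Delta)\times\mathrm{UOS}_0(\Delta')\to Z$. Fixing one variable at a positive matrix gives a cp map into an operator system, which therefore factors through the Archimedeanisation, and similarly in the other variable. Such maps thus correspond to jointly cp maps on $\mathrm{UOS}(\Delta)\times\mathrm{UOS}(\Delta')$, which in turn correspond to ucp maps on $\mathrm{UOS}(\Delta)\otimes_{\max}\mathrm{UOS}(\Delta')$. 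Both operator systems therefore share the same universal property with respect to the canonical map on generators. One must also check that the null spaces $N$ match; this follows because states correspond under the same bijection. Hence the identity on generators is a complete order isomorphism.
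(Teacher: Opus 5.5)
Your proof is correct, but it is organised differently from the paper's. The paper never compares cones directly. It shows, by diagram chasing with jointly completely positive maps, that $\mathrm{UOS}(\Delta)\otimes_{\max}\mathrm{UOS}(\Delta')$ is the colimit of the product diagram of the systems $\mathrm{SOS}(\Sigma)\otimes_{\max}\mathrm{SOS}(\Sigma')$. It then rewrites these as $\mathrm{SOS}(\Sigma\times\Sigma')$ via \autoref{prop:SOS-max-tensor-product}, and identifies the resulting colimit with $\mathrm{UOS}(\Delta\times\Delta')$ via \autoref{prop:Colimit}.

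You instead prove the stronger, pre-Archimedean identity $\mathcal{U}_n(\Delta\times\Delta')=\mathcal{D}^{\max}_n(\mathrm{UOS}_0(\Delta),\mathrm{UOS}_0(\Delta'))$ by hand. You use universal properties only to commute $\mathrm{Arch}$ with $\mathcal{D}^{\max}$, which is essentially the paper's Step 2 (Archimedeanise one variable at a time).

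The paper's route yields a reusable categorical statement. However, its final isomorphism silently requires that product sets be cofinal in $\mathcal{S}(\Delta\times\Delta')$, because the colimit in \autoref{prop:Colimit} runs over all of $\mathcal{S}(\Delta\times\Delta')$. That is exactly your observation that $\Sigma\subseteq\pi_1(\Sigma)\times\pi_2(\Sigma)$ with $\pi_1(\Sigma)\in\mathcal{S}(\Delta)$ and $\pi_2(\Sigma)\in\mathcal{S}(\Delta')$. Your route makes this explicit and does not depend on \autoref{prop:Colimit} or \autoref{prop:SOS-max-tensor-product}.

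For the same reason, keep only your direct justification of the inclusion $\supseteq$ via $aa^*\otimes bb^*=(a\otimes b)(a\otimes b)^*$. The written proof of \autoref{prop:SOS-max-tensor-product} compares only level-one cones. The level-one cone of sums $\sum_i A_i\otimes B_i$ used there misses, for example, $yy^*$ for $y=\delta_{(0,0)}+\delta_{(1,1)}\in\mathbb{C}[\mathbb{Z}^2]$: for such sums the $\delta_{(1,1)}$-coefficient has modulus at most $\tfrac14$ of the $\delta_{(0,0)}$-coefficient.

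Two small points in your last step. First, the maps obtained by freezing one variable at a positive matrix are cp but not unital. You therefore need the easy fact that a cp map into an operator system still sends Archimedeanised cones to positive cones; use that the image of the unit is dominated by a multiple of the unit of the target. Second, the null spaces are all zero, so no matching of states is needed. This holds because the canonical maps into $\mathrm{C}^*(\Gamma)$, $\mathrm{C}^*(\Gamma')$ and $\mathrm{C}^*(\Gamma\times\Gamma')$ are ucp and injective.
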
  

Our proof below goes in three steps:
First, we check that the cartesian product of two diagrams of sums-of-squares systems is a diagram where all the maps are jointly completely positive.
Second, we show that the cartesian product of the colimits is the colimit of this diagram, again for jointly completely positive maps.
This follows by construction of the cones $\mathrm{M}_n(\mathrm{UOS}_0(\Delta))_+$ as the cones generated by the $\mathrm{SOS}$-cones which they contain.
Third, we conclude that the maximal tensor product $\mathrm{UOS}(\Delta) \otimes_\mathrm{max} \mathrm{UOS}(\Delta')$ satisfies the universal property of the colimit of the according diagram of maximal tensor products of sums-of-squares systems, by using the universal property of the maximal tensor product;
recall that the universal property of the maximal tensor product \cite[Theorem 5.8]{KPTT11} gives a canonical identification of completely positive maps out of the maximal tensor product with jointly completely positive maps out of the cartesian product of two operator systems.  

\begin{proof}
\emph{Step 1.} 
As above, denote by $\mathcal{S}(\Delta)$ the family of finite subsets $\Sigma \subseteq \Gamma$ with $\Sigma\Sigma^{-1} \subseteq \Delta$, and analogously for $\mathcal{S}(\Delta')$.
We equip $\mathcal{S}(\Delta)$ and $\mathcal{S}(\Delta')$ respectively with the $\leq_{\Delta}$- and $\leq_{\Delta'}$-orders, which induce the diagrams 
\begin{align}
\left((\mathrm{SOS}(\Sigma))_{\Sigma \in \mathcal{S}(\Delta)}, (\alpha_{2,1})_{\Sigma_1\leq_{\Delta}\Sigma_2}\right) \
\ \mbox{and} \ \
\left((\mathrm{SOS}(\Sigma'))_{\Sigma' \in \mathcal{S}(\Delta')}, (\beta_{2,1})_{\Sigma'_1\leq_{\Delta'}\Sigma'_2}\right),
\end{align}
as in \autoref{prop:Colimit}.
By functoriality of the maximal tensor product, we obtain a diagram 
\begin{align}\label{eqn:Diagram-Max-Tensor-Product-SOS-full}
\left((\mathrm{SOS}(\Sigma) \otimes_\mathrm{max} \mathrm{SOS}(\Sigma'))_{\Sigma \in \mathcal{S}(\Delta), \Sigma' \in \mathcal{S}(\Delta')}, (\alpha_{2,1} \otimes \beta_{2,1})_{\Sigma_1\leq_{\Delta}\Sigma_2, \Sigma'_1\leq_{\Delta'}\Sigma'_2}\right).
\end{align}
		
Let $X$ be an operator system and assume that, for all $\Sigma \in \mathcal{S}(\Delta)$ and $\Sigma' \in \mathcal{S}(\Delta')$, there are ucp maps $\psi_{\Sigma,\Sigma'} : \mathrm{SOS}(\Sigma) \otimes_\mathrm{max} \mathrm{SOS}(\Sigma') \rightarrow X$ such that 
\begin{align}\label{eqn:Diagram-Max-Tensor-Product-SOS}
\psi_{\Sigma_2,\Sigma'_2} \circ (\alpha_{2,1} \otimes \beta_{2,1}) 
= \psi_{\Sigma_1,\Sigma'_1},
\end{align}
for all $\Sigma_1, \Sigma_2 \in \mathcal{S}(\Delta)$ and $\Sigma'_1, \Sigma'_2 \in \mathcal{S}(\Delta')$ with $\Sigma_1 \leq_{\Delta} \Sigma_2$ and $\Sigma'_1 \leq_{\Delta'} \Sigma'_2$.
By the universal property of the maximal tensor product \cite[Theorem 5.8]{KPTT11}, for all $\Sigma \in \mathcal{S}(\Delta)$ and $\Sigma' \in \mathcal{S}(\Delta')$, the map 
\begin{align}
\phi_{\Sigma,\Sigma'} : \mathrm{SOS}(\Sigma) \times \mathrm{SOS}(\Sigma') \rightarrow X; \ \
(\delta_{st^{-1}},\delta_{s't'^{-1}}) \mapsto \psi_{\Sigma,\Sigma'}(\delta_{st'^{-1}}\otimes\delta_{s't'^{-1}}),
\end{align}
is jointly completely positive.
Note that (\ref{eqn:Diagram-Max-Tensor-Product-SOS}) implies 
\begin{align}\label{eqn:Diagram-Cartesian-Product-SOS}
\phi_{\Sigma_2,\Sigma'_2} \circ (\alpha_{2,1} \times \beta_{2,1}) 
= \phi_{\Sigma_1,\Sigma'_1},
\end{align}
for all $\Sigma_1, \Sigma_2 \in \mathcal{S}(\Delta)$ and $\Sigma'_1, \Sigma'_2 \in \mathcal{S}(\Delta')$	with $\Sigma_1 \leq_{\Delta} \Sigma_2$ and $\Sigma'_1 \leq_{\Delta'} \Sigma'_2$.
Moreover, the canonical inclusion map of $\mathrm{SOS}(\Sigma) \times \mathrm{SOS}(\Sigma')$ into the cartesian product of matrix ordered vector spaces $\mathrm{UOS}_0(\Delta) \times \mathrm{UOS}_0(\Delta')$ is jointly completely positive.

\smallskip
\noindent
\emph{Step 2.}
For $\gamma \in \Delta$ and $\gamma' \in \Delta'$ let $\Sigma \in \mathcal{S}(\Delta)$ and $\Sigma' \in \mathcal{S}(\Delta')$ be such that $\gamma \in \Sigma\Sigma^{-1}$ and $\gamma' \in \Sigma'\Sigma'^{-1}$.
Define 
\begin{align}
\ddot{\phi}(\delta_{\gamma},\delta_{\gamma'}) := \phi_{\Sigma,\Sigma'}(\delta_{st^{-1}},\delta_{s't'^{-1}}),
\end{align}
where $s,t \in \Sigma$, $s',t' \in \Sigma'$ are any elements satisfying $st^{-1} = \gamma$ and $s't'^{-1} = \gamma'$.
By item (iii) of \autoref{lem:sos-same} and (\ref{eqn:Diagram-Cartesian-Product-SOS}), we have that the map $\ddot{\phi}$ is well-defined on $\mathrm{UOS}_0(\Delta) \times \mathrm{UOS}_0(\Delta')$.
We also fix the notations 
\begin{align} 
q : \mathrm{UOS}_0(\Delta) \rightarrow \mathrm{UOS}(\Delta) \ \ \text{and} \ \ q' : \mathrm{UOS}_0(\Delta') \rightarrow \mathrm{UOS}(\Delta')
\end{align}
for the canonical quotient maps and 
\begin{align} 
\iota_\Sigma : \mathrm{SOS}(\Sigma) \rightarrow \mathrm{UOS}_0(\Delta)
\ \ \mbox{and} \ \
\iota_\Sigma' : \mathrm{SOS}(\Sigma') \rightarrow \mathrm{UOS}_0(\Delta')
\end{align}
for the canonical inclusion maps.

Since the matricial cones of the matrix ordered vector spaces $\mathrm{UOS}_0(\Delta)$ and $\mathrm{UOS}_0(\Delta')$ are generated by the matricial cones of the operator systems $\mathrm{SOS}(\Sigma)$ and $\mathrm{SOS}(\Sigma')$, for $\Sigma \in \mathcal{S}(\Delta)$ and $\Sigma' \in \mathcal{S}(\Delta')$, and by joint complete positivity of the maps $\phi_{\Sigma,\Sigma'}$, the map $\ddot{\phi}$ is jointly completely positive by construction.
Indeed, let $x \in \mathrm{M}_n(\mathrm{UOS}_0(\Delta))^+ = \mathcal{U}_n(\Delta)$ be a positive matrix over $\mathrm{UOS}_0(\Delta)$; then we have $x = \sum_{j=1}^k x_{j}$, with $x_j \in \mathrm{M}_n(\mathrm{SOS}(\Sigma_j))^+ = \mathcal{Q}_n(\Sigma_j)$, for some $\Sigma_j \in \mathcal{S}(\Delta)$, $j=1,\dots,k$.
Likewise for $y \in \mathrm{M}_n(\mathrm{UOS}_0(\Delta'))^+ = \mathcal{U}_n(\Delta')$ a positive matrix over $\mathrm{UOS}_0(\Delta')$, we have $y = \sum_{j'=1}^m y_{j'}$, with $y_{j'} \in \mathrm{M}_n(\mathrm{SOS}(\Sigma'_{j'}))^+ = \mathcal{Q}_n(\Sigma'_{j'})$, for some $\Sigma'_{j'} \in \mathcal{S}(\Delta')$, $j'=1,\dots,m$.
Applying $\ddot{\phi}$ we get
\begin{align}
\ddot{\phi}^{(n)}(x,y)
& =
\sum_{j=1}^k \sum_{j'=1}^m \ddot{\phi}^{(n)}(x_j, y_{j'})
=
\sum_{j=1}^k \sum_{j'=1}^m \phi_{\Sigma_j, \Sigma'_{j'}}^{(n)}(x_j, y_{j'}),
\end{align}
which is a sum of positive elements in $\mathrm{M}_n(X)$ as every $\phi_{\Sigma_j, \Sigma'_{j'}}$ is jointly completely positive, and thus $\ddot{\phi}^{(n)}(x,y)$ is positive.
\black

It follows that $\ddot{\phi}$ induces a unique jointly completely positive map on the cartesian product of the Archimedeanisations
\begin{align}
\phi : \mathrm{UOS}(\Delta) \times \mathrm{UOS}(\Delta') \rightarrow X
\end{align}
which satisfies 
$\phi \circ (q_1 \times q_2) = \ddot{\phi}$ and
\begin{align}
\phi \circ (q \times q') \circ (\iota_\Sigma \times \iota_\Sigma') = \phi_{\Sigma,\Sigma'}.
\end{align}
Indeed, fixing $x\in \mathrm{UOS}_0(\Delta)^+$, 
we have that the map $\phi_x : \mathrm{UOS}_0(\Delta') \rightarrow X$, given by $\phi_x(y) = \ddot{\phi}(x,y)$, is completely positive and hence induces a canonical completely positive map $\tilde{\phi}_x : \mathrm{UOS}(\Delta') \rightarrow X$.
Let $\theta : \mathrm{UOS}_0(\Delta) \times \mathrm{UOS}(\Delta') \rightarrow X$ be the (well-defined) map, satisfying $\theta(x,y) = \tilde{\phi}_x(y)$, $x\in \mathrm{UOS}_0(\Delta)^+$,
$y\in \mathrm{UOS}(\Delta')$. 
By construction, $\theta$ is jointly completely positive. 
For $y\in \mathrm{UOS}(\Delta')^+$, let $\theta_y : \mathrm{UOS}_0(\Delta) \rightarrow X$ be the map, 
given by $\theta_y(x) = \theta(x,y)$; 
as before, the latter map gives rise to a canonical completely positive map $\tilde{\theta}_y : \mathrm{UOS}(\Delta) \rightarrow X$. 
Finally, let $\phi(x,y) = \tilde{\theta}_y(x)$, $x\in \mathrm{UOS}(\Delta)$, $y\in \mathrm{UOS}(\Delta')$.
		
\begin{figure}[h]
	\centering
	\begin{tikzcd}[scale=1, transform shape, column sep=1.75em]
		&&& \mathrm{UOS}_0(\Delta) \times \mathrm{UOS}_0(\Delta')  \\
		\mathrm{SOS}(\Sigma_1) \times \mathrm{SOS}(\Sigma'_1) & \mathrm{SOS}(\Sigma_2) \times \mathrm{SOS}(\Sigma'_2) & &  \mathrm{UOS}(\Delta) \times \mathrm{UOS}(\Delta') \\
		&&& X 
		\arrow["\iota_{\Sigma_1} \times \iota_{\Sigma'_1}", bend left=20, from=2-1, to=1-4]
		\arrow["{\iota}_{\Sigma_2} \times \iota_{\Sigma'_2}", from=2-2, to=1-4]
		\arrow["q \times q'"', from=1-4, to=2-4]
		\arrow["\ddot{\phi}", bend left=80, from=1-4, to=3-4]
		\arrow["\alpha_{2,1} \times \beta_{2,1}"{yshift=4pt}, from=2-1, to=2-2]
		\arrow["\phi_{\Sigma_1,\Sigma'_1}"', bend right=20, from=2-1, to=3-4]
		\arrow["\phi_{\Sigma_2,\Sigma'_2}"', from=2-2, to=3-4]
		\arrow["\phi"', from=2-4, to=3-4]
	\end{tikzcd}
	\caption{The universal property of the colimit for the cartesian product of two diagrams of sums-of-squares systems.
	All maps indicated in this diagram are jointly completely positive.}
\end{figure}
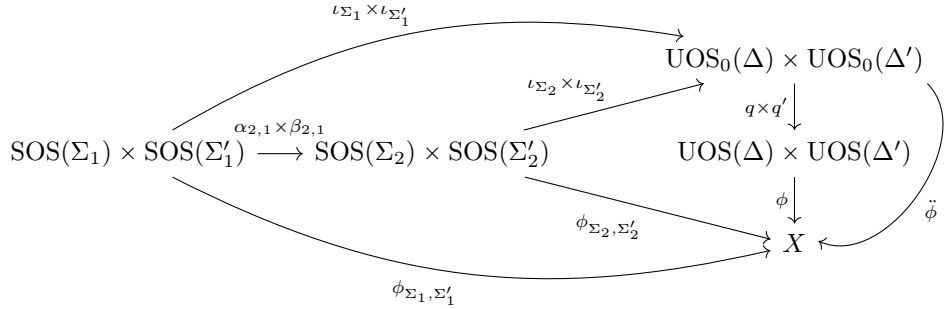

\smallskip
\noindent
\emph{Step 3.}
We now apply the universal property of the maximal tensor product again to see that the map 
\begin{align}
\psi : \mathrm{UOS}(\Delta) \otimes_\mathrm{max} \mathrm{UOS}(\Delta') \rightarrow X; \ \
p \otimes q \mapsto \phi(p,q),
\end{align}
is completely positive and satisfies the universal property of the colimit of the diagram (\ref{eqn:Diagram-Max-Tensor-Product-SOS-full}).
To summarise, we have just shown the first complete order isomorphism displayed below.
The second complete order isomorphism follows from \autoref{prop:SOS-max-tensor-product}, and the third one from \autoref{prop:Colimit}:
\begin{align}
\mathrm{UOS}(\Delta) \otimes_\mathrm{max} \mathrm{UOS}(\Delta')
&\cong \underset{\underset{\Sigma \in \mathcal{S}(\Delta), \Sigma' \in \mathcal{S}(\Delta')}{\longrightarrow}}{\lim} \mathrm{SOS}(\Sigma) \otimes_\mathrm{max} \mathrm{SOS}(\Sigma') \\
&\cong \underset{\underset{\Sigma \in \mathcal{S}(\Delta), \Sigma' \in \mathcal{S}(\Delta')}{\longrightarrow}}{\lim} \mathrm{SOS}(\Sigma \times \Sigma') 
\cong \mathrm{UOS}(\Delta \times \Delta')
\end{align}
This concludes the proof.
\end{proof}

\subsection{The operator extension properties for positivity domains}

We now show that the complete factorisation property is equivalent to a well-known extension property for partially defined positive semi-definite functions. 
	
\begin{definition}
Let $\Gamma$ be a discrete group, $\Delta\subseteq \Gamma$ be a positivity domain and $H$ a Hilbert space.
A function $u : \Delta \to \cl B(H)$ is called \emph{positive semi-definite} if $(u(st^{-1}))_{s,t\in\Sigma} \in \mathrm{M}_\Sigma(\mathcal{B}(H))^+ \subseteq \mathcal{B}(\ell^2(\Sigma) \otimes H)$ whenever $\Sigma \subseteq \Gamma$ is a finite subset with $\Sigma\Sigma^{-1} \subseteq \Delta$.
\end{definition}

We denote the cone of all positive semi-definite $\mathcal{B}(H)$-valued functions on $\Delta$ by $\mathrm{B}(\Delta,H)^+$ and its linear span by $\mathrm{B}(\Delta,H)$.
We call $\mathrm{B}(\Delta,H)$ the \emph{$\mathcal{B}(H)$-valued Fourier--Stieltjes space of (the positivity domain) $\Delta$}, and $\mathrm{B}(\Delta) := \mathrm{B}(\Delta,\mathbb{C})$ the \emph{Fourier--Stieltjes space of $\Delta$}.

\begin{proposition}\label{prop:funls}
Let $\Gamma$ be a discrete group and $\Delta \subseteq \Gamma$ a positivity domain. 
\begin{itemize}
\item[(i)] 
If $u\in \mathrm{B}(\Delta,H)^+$ is a positive semi-definite function on $\Delta$ then there exists a (necessarily) unique completely positive map $\phi_u : \mathrm{UOS}_0(\Delta) \to \cl B(H)$ such that $\phi_u(\delta_s) = u(s)$, for all $s\in \Delta$. 
			
\item[(ii)] If  $\phi : \mathrm{UOS}_0(\Delta) \to \cl B(H)$ is a  completely positive map then there exists a (necessarily) unique positive semi-definite function $u_\phi : \Delta \rightarrow \mathcal{B}(H)$ such that $u_\phi(s) = \phi(\delta_s)$, for all $s \in \Delta$. 
\end{itemize}

\end{proposition}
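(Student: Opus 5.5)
The plan is to reduce both parts to the finite-set duality of \autoref{prop:PD-functions-Toeplitz-CP-maps-SOS}, using the description (\ref{eqn:E-cone-sum}) of $\mathcal{U}_n(\Delta)$ as a sum of the cones $\mathcal{Q}_n(\Sigma)$, $\Sigma \in \mathcal{S}(\Delta)$. Uniqueness in both items is immediate, because $\{\delta_s\}_{s\in\Delta}$ is a linear basis of $\mathrm{S}[\Delta]$. So a linear map $\phi$ on $\mathrm{UOS}_0(\Delta)$ and a function $u$ on $\Delta$ determine each other through $\phi(\delta_s) = u(s)$, and the only content is the positivity correspondence.

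For (i), let $u \in \mathrm{B}(\Delta,H)^+$ and define $\phi_u$ on the basis by $\phi_u(\delta_s) := u(s)$, extended linearly. Fix $\Sigma \in \mathcal{S}(\Delta)$. The matrix $T^\Sigma := (u(st^{-1}))_{s,t\in\Sigma}$ is a $\mathcal{B}(H)$-valued Toeplitz matrix, since its entries depend only on $st^{-1}$. It is positive semi-definite by the definition of positive semi-definiteness on $\Delta$. By \autoref{prop:PD-functions-Toeplitz-CP-maps-SOS}(i), the map $\phi_{T^\Sigma} : \mathrm{SOS}(\Sigma) \to \mathcal{B}(H)$ is completely positive, and it agrees with $\phi_u$ on $\mathrm{S}[\Sigma\Sigma^{-1}]$. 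Now take $x \in \mathcal{U}_n(\Delta)$. By (\ref{eqn:E-cone-sum}) we can write $x = \sum_j x_j$ with $x_j \in \mathcal{Q}_n(\Sigma_j)$ and $\Sigma_j \in \mathcal{S}(\Delta)$. Then
\[
\phi_u^{(n)}(x) = \sum_j \phi_{T^{\Sigma_j}}^{(n)}(x_j) \geq 0,
\]
so $\phi_u$ is completely positive on $\mathrm{UOS}_0(\Delta)$.

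For (ii), let $\phi : \mathrm{UOS}_0(\Delta) \to \mathcal{B}(H)$ be completely positive and set $u_\phi(s) := \phi(\delta_s)$. For $\Sigma \in \mathcal{S}(\Delta)$, the inclusion $\mathrm{SOS}(\Sigma) \to \mathrm{UOS}_0(\Delta)$ is completely positive, because $\mathcal{Q}_n(\Sigma) \subseteq \mathcal{U}_n(\Delta)$. The restriction of $\phi$ to $\mathrm{SOS}(\Sigma)$ is therefore completely positive. One could invoke \autoref{prop:PD-functions-Toeplitz-CP-maps-SOS}(ii) here, but it is more direct to evaluate $\phi^{(|\Sigma|)}$ on $(\delta_{st^{-1}})_{s,t\in\Sigma}$. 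This element lies in $\mathcal{Q}_{|\Sigma|}(\Sigma)$ by the computation in the proof of \autoref{lem:SOS-cones-MOU}(iii), and its image is $(u_\phi(st^{-1}))_{s,t\in\Sigma}$. Hence that matrix is positive in $\mathrm{M}_\Sigma(\mathcal{B}(H))$. Since $\Sigma \in \mathcal{S}(\Delta)$ was arbitrary, $u_\phi \in \mathrm{B}(\Delta,H)^+$ and $\phi = \phi_{u_\phi}$.

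There is no serious obstacle. The one point needing care is in (i): the hypothesis only gives positivity of $(u(st^{-1}))_{s,t\in\Sigma}$ for \emph{distinct} indices, whereas $\mathcal{Q}_n(\Sigma)$ is generated by matrices $(\delta_{s_is_j^{-1}})_{i,j}$ with possibly repeated $s_i$. This repeated-index issue is exactly what \autoref{prop:PD-functions-Toeplitz-CP-maps-SOS}(i) already handles, so I would cite it rather than redo the argument.
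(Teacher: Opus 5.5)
Your proposal is correct and follows essentially the same argument as the paper. For (i), the paper appeals to the minimality of $(\mathcal{U}_n(\Delta))_n$ from \autoref{rem:Delta-cones-MOU} together with the computation (\ref{eqn:T-minors-amplified-phi_u}), which amounts to your route of applying \autoref{prop:PD-functions-Toeplitz-CP-maps-SOS}(i) on each $\mathrm{SOS}(\Sigma)$ and summing via (\ref{eqn:E-cone-sum}); your (ii) and your uniqueness argument match the paper's exactly.
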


\begin{proof}
(i) 
Let $u\in \mathrm{B}(\Delta,H)^+$.
By the charaterisation of $\mathrm{M}_n(\mathrm{UOS}_0(\Delta))^+ = \mathcal{U}_n(\Delta)$ as the smallest compatible family of positive matrix-cones containing the set $$\{(\delta_{st^{-1}})_{s,t\in\Sigma} \mid \Sigma \subseteq \Gamma, \Sigma\Sigma^{-1} \subseteq \Delta, \lvert\Sigma\rvert < \infty\}$$ from \autoref{rem:Delta-cones-MOU} and by the same computation as in (\ref{eqn:T-minors-amplified-phi_u}), the induced map $\phi_u : \mathrm{UOS}_0(\Delta) \rightarrow \mathcal{B}(H)$ is completely positive.
		
\smallskip
\noindent
(ii) 
Conversely if $\phi : \mathrm{UOS}_0(\Delta) \to \cl B(H)$ is a  completely positive map, let $\Sigma \subseteq \Gamma$ be a finite subset with $\Sigma\Sigma^{-1} \subseteq \Delta$.
Then we have
\begin{align}
(u_\phi(st^{-1}))_{s,t\in\Sigma}
= (\phi(\delta_{st^{-1}}))_{s,t\in\Sigma}
= \phi^{(\lvert\Sigma\rvert)}((\delta_{st^{-1}})_{s,t\in\Sigma})
\in \mathcal{B}(H)^+,
\end{align}
so $u_\phi \in \mathrm{B}(\Delta,H)^+$.
		
The uniqueness statements follow since the assignments $u \mapsto \phi_u$ and $\phi \mapsto u_\phi$ are inverse to each other.
\end{proof}
	
Since the matrix ordered vector space dual of an operator system $X$ is by definition equipped with the positive matrix-cones $\mathrm{M}_n(X^{\mathrm{d}})^+ := \mathcal{CP}(X,\mathrm{M}_n)$, the following is immediate:

\begin{corollary}\label{cor:B-Delta-MOVS}
The pair $(\mathrm{B}(\Delta),(\mathrm{B}(\Delta,\mathbb{C}^n)^+)_n)$ is the matrix ordered vector space dual of the operator system $\mathrm{UOS}(\Delta)$.
\end{corollary}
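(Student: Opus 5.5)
The plan is to combine \autoref{prop:funls} with the fact that passing from $\mathrm{UOS}_0(\Delta)$ to its Archimedeanisation $\mathrm{UOS}(\Delta)$ does not change the completely positive maps into matrix algebras. Fix $n\in\mathbb{N}$. By definition $\mathrm{M}_n(\mathrm{UOS}(\Delta)^{\mathrm{d}})^+ = \mathcal{CP}(\mathrm{UOS}(\Delta),\mathrm{M}_n)$. We want a bijection between this cone and $\mathrm{B}(\Delta,\mathbb{C}^n)^+$. It should be implemented by $\phi\mapsto (s\mapsto \phi(\delta_s))$ and be linear on the underlying spaces.

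The first step is to show that the kernel $N$ of the Archimedeanisation of $\mathrm{UOS}_0(\Delta)$ is trivial, so that $\mathrm{UOS}(\Delta)$ has underlying $^*$-vector space $\mathrm{S}[\Delta]$. For this, use the ucp inclusion $\iota_0:\mathrm{UOS}_0(\Delta)\to \mathrm{C}^*(\Gamma)$ from \autoref{rem:Delta-cones-MOU}. Composing it with states of $\mathrm{C}^*(\Gamma)$ gives states of $\mathrm{UOS}_0(\Delta)$, and these already separate the linearly independent elements $\delta_s$, $s\in\Delta$. (For instance, the trace $\delta_s\mapsto\delta_{s,e}$ composed with multiplication by a unitary $\delta_g$ suffices after taking linear combinations; alternatively, states of $\mathrm{C}^*(\Gamma)$ separate points of $\mathrm{C}^*(\Gamma)$.) Hence $N=\{0\}$.

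Next, by the universal property of the Archimedeanisation, every ucp map $\mathrm{UOS}_0(\Delta)\to\mathrm{M}_n$ factors uniquely through $q$. We need this for all cp maps, not only unital ones. A cp map $\phi$ into $\mathrm{M}_n$ satisfies $\phi^{(k)}(x + r e_k)\geq 0$ for all $r>0$ whenever $x\in\mathrm{Arch}(\mathrm{M}_k(\mathrm{UOS}_0(\Delta))^+)$. Letting $r\to0$ and using closedness of $\mathrm{M}_{kn}^+$ then shows that $\phi$ is cp on $\mathrm{UOS}(\Delta)$ directly, without normalising. Conversely, cp maps on $\mathrm{UOS}(\Delta)$ restrict to cp maps on $\mathrm{UOS}_0(\Delta)$, since $\mathcal{U}_k(\Delta)\subseteq \mathrm{Arch}(\mathcal{U}_k(\Delta))$. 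Therefore $\mathcal{CP}(\mathrm{UOS}_0(\Delta),\mathrm{M}_n)=\mathcal{CP}(\mathrm{UOS}(\Delta),\mathrm{M}_n)$ as sets of linear maps on $\mathrm{S}[\Delta]$.

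Then apply \autoref{prop:funls} with $H=\mathbb{C}^n$. It identifies $\mathcal{CP}(\mathrm{UOS}_0(\Delta),\mathrm{M}_n)$ bijectively with $\mathrm{B}(\Delta,\mathbb{C}^n)^+$ via $u\mapsto\phi_u$. This assignment is the restriction of the linear isomorphism $\mathrm{S}[\Delta]^{\mathrm{d}}\otimes \mathrm{M}_n\to \{\text{functions }\Delta\to\mathrm{M}_n\}$ given by evaluation on the basis $\{\delta_s\}$. Taking spans at level $n=1$ identifies $\mathrm{UOS}(\Delta)^{\mathrm{d}}$ with $\mathrm{B}(\Delta)$, and the cones at each level match. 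Finally, one checks that $\mathrm{B}(\Delta,\mathbb{C}^n)$ is indeed $\mathrm{M}_n(\mathrm{B}(\Delta))$. The inclusion $\mathrm{span}\,\mathrm{B}(\Delta,\mathbb{C}^n)^+\subseteq \mathrm{M}_n(\mathrm{B}(\Delta))$ follows by compressing to entries. The reverse inclusion uses that the dual cones $\mathcal{CP}(X,\mathrm{M}_n)$ span $\mathcal{CB}(X,\mathrm{M}_n)=\mathrm{M}_n(X^{\mathrm{d}})$ by Wittstock's decomposition. The only mildly delicate point is the triviality of $N$ together with the passage of non-unital cp maps through the Archimedeanisation; the rest is bookkeeping.
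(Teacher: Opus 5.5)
Your proposal is correct and follows the same route as the paper, which obtains the corollary directly from \autoref{prop:funls} together with the definition $\mathrm{M}_n(X^{\mathrm{d}})^+ := \mathcal{CP}(X,\mathrm{M}_n)$. The paper calls this immediate, but \autoref{prop:funls} is stated for $\mathrm{UOS}_0(\Delta)$ rather than $\mathrm{UOS}(\Delta)$, and your two extra checks fill exactly that step: that the Archimedeanisation kernel $N$ is trivial (via states of $\mathrm{C}^*(\Gamma)$ composed with $\iota_0$), and that possibly non-unital cp maps into $\mathrm{M}_n$ pass from $\mathrm{UOS}_0(\Delta)$ to $\mathrm{UOS}(\Delta)$ by letting $r\to 0$.
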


The identification 
\begin{align}
\mathrm{B}(\Delta,H)
:= \mathrm{span} (\mathrm{B}(\Delta,H)^+) 
\cong \mathrm{span}(\mathcal{CP}(\mathrm{UOS}(\Delta),\mathcal{B}(H))) 
= \mathcal{CB}(\mathrm{UOS}(\Delta),\mathcal{B}(H))
\end{align}  
endows the Fourier--Stieltjes space $\mathrm{B}(\Delta)$ with (matrix ordered) operator space structure.

\begin{definition}
Let $\Gamma$ be a discrete group.
A positivity domain $\Delta \subseteq \Gamma$ is said to possess the \emph{positive operator extension property (as a positivity domain)} if, for every Hilbert space $H$, the following condition holds:
\begin{quote}
Condition (PEPD): \quad For every positive semi-definite function $u : \Delta \rightarrow \mathcal{B}(H)$, there exists a positive semi-definite function $\tilde{u} : \Gamma \rightarrow \mathcal{B}(H)$ such that $\tilde{u}|_\Delta = u$.
\end{quote}

Similarly, we say that $\Delta$ possesses the \emph{cp extension property (as a positivity domain)} if condition (PEPD) holds for every finite dimensional Hilbert space $H$.
\end{definition}

\begin{definition}
Let $\Gamma$ be a discrete group.
A positivity domain $\Delta \subseteq \Gamma$ is said to  the \emph{bounded operator extension property (as a positivity domain)} if, for every Hilbert space $H$, the following condition holds:
\begin{quote}
Condition (BEPD): \quad 
For every positive semi-definite function $u : \Delta \rightarrow \mathcal{B}(H)$, there exists a positive semi-definite function $\tilde{u} : \Gamma \rightarrow \mathcal{B}(H)$ such that $\tilde{u}|_\Delta = u$ and $\lVert \tilde{u} \rVert_{\mathrm{B}(\Gamma,H)} = \lVert u \rVert_{\mathrm{B}(\Delta,H)}$.
\end{quote}

Similarly, we say that $\Sigma$ possesses the \emph{cb extension property} if condition (BEPD) holds for every finite dimensional Hilbert space $H$.
\end{definition}

\begin{remark}\label{rmk:BW-argument-operator-extension-positivity-domain}
Analogues of \autoref{rmk:BW-argument-positive-operator-extension} and \autoref{rmk:BW-argument-bounded-operator-extension} apply, showing that the positive operator and cp extension properties, as well as the bounded operator and cb extension properties (all as positivity domains) are respectively equivalent.
\end{remark}

\begin{remark}\label{rmk:Extension-property-quotient-map}
By \autoref{prop:funls} the canonical ucp inclusion map $\iota : \mathrm{UOS}(\Delta) \to \mathrm{C}^*(\Gamma)$ induces the canonical surjective cp restriction map $\rho : \mathrm{B}(\Gamma) \rightarrow \mathrm{B}(\Delta)$ as its dual map.
In analogy to \autoref{prop:Sigma-ext-property-quotient-map} and \autoref{prop:Sigma-ext-property-quotient-map-2}, the positivity domain $\Delta$ has the cp/cb extension property (as a positivity domain) if and only if the restriction map $\rho : \mathrm{B}(\Gamma) \rightarrow \mathrm{B}(\Delta)$ is an $\mathbf{MVS}$-/$\mathbf{OSp}$-quotient map.
\end{remark}

\begin{theorem}\label{th_main}
Let $\Gamma$ be a discrete group and $\Delta \subseteq \Gamma$ a positivity domain. 
The following are equivalent:
\begin{enumerate}
\item[(i)] The positivity domain $\Delta$ has the positive operator extension property (as a positivity domain).
\item[(ii)] The positivity domain $\Delta$ has the bounded operator extension property (as a positivity domain).
\item[(iii)] The positivity domain $\Delta$ has the complete factorisation property.
\end{enumerate}
\end{theorem}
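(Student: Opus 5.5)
The plan is to imitate the proof of \autoref{thm:Characterizations-CPEP}, with $\mathrm{UOS}(\Delta)$ in place of $\mathrm{SOS}(\Sigma)$ and $\mathrm{B}(\Delta)$ in place of the Toeplitz system. Consider the canonical ucp map $\iota : \mathrm{UOS}(\Delta) \rightarrow \mathrm{C}^*(\Gamma)$ of \autoref{fig:Factorization-properties}. By \autoref{prop:funls} and \autoref{cor:B-Delta-MOVS}, its dual is the restriction map $\rho : \mathrm{B}(\Gamma) \rightarrow \mathrm{B}(\Delta)$, with $\mathrm{B}(\Gamma)$ and $\mathrm{B}(\Delta)$ carrying both their matrix ordered and operator space dual structures. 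By \autoref{rmk:Extension-property-quotient-map}, together with \autoref{rmk:BW-argument-operator-extension-positivity-domain}, (i) holds exactly when $\rho$ is an $\mathbf{MVS}$-quotient map, and (ii) holds exactly when $\rho$ is an $\mathbf{OSp}$-quotient map. If one wants to verify that remark, the arguments of \autoref{prop:Sigma-ext-property-quotient-map} and \autoref{prop:Sigma-ext-property-quotient-map-2} transfer verbatim. The only change is that finite-dimensionality of $\mathrm{M}_\Sigma(\mathrm{M}_n)$ is replaced by pointwise convergence on each $\delta_s$, $s \in \Delta$, which is all that is needed to show that BW-limits of elements of $\ker\rho^{(n)}$ stay in $\ker\rho^{(n)}$. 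Applying \autoref{prop:Equiv-embeddings-quotients} to $\phi = \iota$ then shows that (i), (ii) and "$\iota$ is a complete order embedding" are all equivalent.

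It remains to show that $\iota$ is a complete order embedding if and only if $\Delta$ has the complete factorisation property; this is the part that goes beyond \autoref{rmk:FP-equiv-embedding}. The positive cones of $\mathrm{M}_n(\mathrm{UOS}(\Delta))$ are $\mathrm{Arch}(\mathcal{U}_n(\Delta))$. Since $\mathcal{U}_n(\Delta) \subseteq \mathrm{M}_n(\mathrm{C}^*(\Gamma))^+$ and the latter cone is Archimedean, the kernel $N$ in the Archimedeanisation is trivial. Indeed, $\iota$ is injective on $\mathrm{S}[\Delta]$, because the $\delta_s$ are linearly independent in $\mathrm{C}^*(\Gamma)$, which can be checked in the left regular representation. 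Hence
\[
\mathrm{Arch}(\mathcal{U}_n(\Delta)) = \{x : x + \epsilon (\mathbf{1}_n \otimes \delta_e) \in \mathcal{U}_n(\Delta) \text{ for all } \epsilon > 0\}.
\]

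For the direction (iii) $\Rightarrow$ embedding, take $z \in \mathrm{M}_n(\mathrm{C}^*(\Gamma))^+$ with finite support in $\Delta$. The complete factorisation property says exactly that $z + \epsilon \mathbf{1}_n \otimes \delta_e \in \mathcal{U}_n(\Delta)$ for every $\epsilon > 0$, by \eqref{eqn:E-cone-sum} and \autoref{lem:SOS-cones-positive-matrix-version}. Thus $z \in \mathrm{Arch}(\mathcal{U}_n(\Delta))$, so $\iota^{-1}$ is completely positive on its image. For the converse, suppose $\iota$ is a complete order embedding and $z$ is as above. Then $z$ is positive in $\mathrm{M}_n(\mathrm{UOS}(\Delta))$, that is, $z + \epsilon \mathbf{1}_n \otimes \delta_e \in \mathcal{U}_n(\Delta)$ for all $\epsilon > 0$. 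An element of $\mathcal{U}_n(\Delta)$ is a finite sum of elements of various $\mathcal{Q}_n(\Sigma_i)$ with $\Sigma_i \in \mathcal{S}(\Delta)$. Expanding each such element as $\sum y y^*$ with $y$ supported on $\Sigma_i$ gives the decomposition \eqref{eq_precise}, which is (iii).

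The main obstacle I expect is the non-finite-dimensional bookkeeping. First, one must make sure that $\mathrm{M}_n(\mathrm{UOS}(\Delta))^+$ is really described by the $\epsilon$-perturbation formula, given that $\mathcal{U}_n(\Delta)$ need not be closed. Second, \autoref{prop:Equiv-embeddings-quotients} must be applied with the dual of $\mathrm{UOS}(\Delta)$, and not of $\mathrm{UOS}_0(\Delta)$, identified with $\mathrm{B}(\Delta)$. This requires that cp maps from $\mathrm{UOS}_0(\Delta)$ to $\mathrm{M}_n$ factor through the Archimedeanisation, which is the universal property of $\mathrm{Arch}$ once one normalises to ucp maps, as in Cases 2 and 3 of \autoref{prop:cpmaps}. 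With these identifications in place, the remaining steps are the formal duality argument above.
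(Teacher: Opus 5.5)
Your proposal is correct and follows the paper's proof. You dualise $\iota : \mathrm{UOS}(\Delta) \to \mathrm{C}^*(\Gamma)$ to the restriction map $\rho : \mathrm{B}(\Gamma) \to \mathrm{B}(\Delta)$, translate (i) and (ii) into $\rho$ being an $\mathbf{MVS}$- or $\mathbf{OSp}$-quotient map via \autoref{rmk:BW-argument-operator-extension-positivity-domain} and \autoref{rmk:Extension-property-quotient-map}, and conclude with \autoref{prop:Equiv-embeddings-quotients}. The one difference is that you check both directions of ``$\iota$ is a complete order embedding $\Leftrightarrow$ complete factorisation property'' explicitly, using that the Archimedeanisation kernel is trivial so that $\mathrm{M}_n(\mathrm{UOS}(\Delta))^+$ is the $\epsilon$-closure of $\mathcal{U}_n(\Delta)$. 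The paper instead just cites \autoref{rmk:FP-equiv-embedding}, which it states only in the ``if'' direction.
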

	
\begin{proof}
Similarly to the proof of \autoref{thm:Characterizations-CPEP}, note that the dual map of the canonical map $\iota : \mathrm{UOS}(\Delta) \rightarrow \mathrm{C}^*(\Gamma)$ is the restriction map $\rho : \mathrm{B}(\Gamma) \rightarrow \mathrm{B}(\Delta)$, where $\mathrm{B}(\Gamma)$ and $\mathrm{B}(\Delta)$ are, at the same time, the matrix ordered vector space and operator space duals of $\mathrm{C}^*(\Gamma)$ and $\mathrm{UOS}(\Delta)$ respectively.
Recall from \autoref{rmk:BW-argument-operator-extension-positivity-domain} and \autoref{rmk:Extension-property-quotient-map} that the positivity domain $\Delta$ has the positive/bounded operator extension property (as a positivity domain) if and only if the restriction map $\rho$ is an $\mathbf{MVS}$-/$\mathbf{OSp}$-quotient map.
By \autoref{prop:Equiv-embeddings-quotients}, (i) and (iii) are both equivalent to the statement that the map $\iota$ is a complete order embedding, that is, to (iii), in view of \autoref{rmk:FP-equiv-embedding}.
\end{proof}

Let $\Gamma$ be a discrete group and $\Sigma \subseteq \Gamma$ be a finite subset.
 It follows from the definitions that if $\Sigma$ has the operator extension/Fej\'er--Riesz property, then $\Sigma\Sigma^{-1}$ has the operator extension/factorisation property \emph{as a positivity domain}, see \autoref{fig:CP-maps}.
 The following example demonstrates that the converse is not true in general.

\begin{example}
Consider the subset $\Sigma := \{0,1,3\} \subseteq \mathbb{Z}$ of the integers, and set $\Delta := \Sigma-\Sigma = \{-3,\dots,3\}$.
Also set $\Sigma' := \{0,1,2,3\}$ and note that $\Sigma'-\Sigma' = \Delta$.
By \cite[Theorem 4.8]{Gab98}, a subset of $\mathbb{Z}$ has the extension property if and only if it is an arithmetic progression.
Hence $\Delta = \Sigma-\Sigma$ as above has the operator extension property (by the operator valued Fej\'er--Riesz lemma applied to $\Sigma'$), whereas $\Sigma$ does not.
\end{example}


\end{document}